\numberwithin{equation}{section}
\theoremstyle{plain}
\newtheorem{thm}{Theorem}[section]
\newtheorem{rem}[thm]{Remark}
\newtheorem{prop}[thm]{Proposition}
\newtheorem{lem}[thm]{Lemma}
\newcommand{\thmref}[1]{Theorem~{\rm \ref{#1}}}
\newcommand{\lemref}[1]{Lemma~{\rm \ref{#1}}}
\newcommand{\propref}[1]{Proposition~{\rm \ref{#1}}}
\begin{document}

\begin{frontmatter}

\title{On the Error Bound in the Normal Approximation for Jack Measures}
\runtitle{Normal Approximation for Jack Measures}

\begin{aug}
\author{\fnms{LOUIS H. Y.} \snm{CHEN}\thanksref{a}\ead[label=e1]{matchyl@nus.edu.sg}}
\author{\fnms{MARTIN} \snm{RAI\v{C}}\thanksref{b}
	\ead[label=e2]{martin.raic@fmf.uni-lj.si}}
\and
\author{\fnms{L\^{E} V\v{A}N} \snm{TH\`{A}NH}\thanksref{c}
\ead[label=e3]{levt@vinhuni.edu.vn}}

\address[a]{Department of Mathematics,
National University of Singapore,
    10 Lower Kent Ridge Road, 
           Singapore 119076. 
\printead{e1}}
\address[b]{University of Ljubljana, University of Primorska, and Institute of Mathematics, Physics and Mechanics, Slovenia. 
	\printead{e2}}
\address[c]{Department of Mathematics,
Vinh University,
182 Le Duan, Vinh, Nghe An,
Vietnam. 
\printead{e3}}

\runauthor{Louis H. Y. Chen et al.}

\affiliation{National University of Singapore and Vinh University}

\end{aug}

\begin{abstract}
In this paper, we obtain uniform and non-uniform bounds on the Kolmogorov distance in the normal approximation for Jack deformations of the character ratio, by using Stein's method and zero-bias couplings. Our uniform bound comes very close to that conjectured by Fulman [J. Combin. Theory Ser. A, 
108 (2004), 275--296]. As a by-product of the proof of the non-uniform bound, we obtain a Rosenthal-type inequality for zero-bias couplings.
\end{abstract}

\begin{keyword}
\kwd{Stein's method}
\kwd{zero-bias coupling}
\kwd{Jack measure}
\kwd{Jack deformation}
\kwd{Kolmogorov distance}
\kwd{uniform bound}
\kwd{non-uniform bound}
\kwd{rate of convergence}
\end{keyword}

\begin{keyword}[class=MSC]
\kwd[Primary ]{60F05}
\kwd[; Secondary ]{60C05}

\end{keyword}

\end{frontmatter}

\section{Introduction and main results}\label{sec:int}

 Let $G$ be a finite group, and $G^{*}$ the set of all the
irreducible representations of $G$. Then $$\sum_{\pi\in
G^{*}}{\text{dim}}(\pi)^2=|G|,$$ where ${\text{dim}}(\pi)$ denotes
the dimension of the irreducible representation $\pi$ (see 
\cite[Proposition 1.10.1]{Sagan}). The Plancherel measure is a
probability measure on $G^{*}$ defined by
\[{\mathbb{P}}(\{\pi\})=\dfrac{{\text{dim}}(\pi)^2}{|G|}.\]
Let $n$ be a positive integer. An important special case is the finite symmetric group ${\cal{S}}_n$. For this group, the
irreducible representations are parametrized by partitions
$\lambda$ of $n$,  and
the dimension of the representation associated to $\lambda$
is known to be equal to
the number of standard $\lambda$-tableaux
(see \cite[Theorem 2.6.5]{Sagan}). We also denote the number of standard
$\lambda$-tableaux by ${\text{dim}}(\lambda)$, and write a partition
$\lambda=(\lambda_1,\lambda_2,\dots,\lambda_m)$ of $n$ simply
$\lambda\vdash n$. The hooklength of a box $s$ in the partition
$\lambda$ is defined as $h(s)=a(s)+l(s)+1$.  Here $a(s)$
denotes the number of boxes in the same row of $s$ and to the right
of $s$ (the ``arm" of $s$) and $l(s)$ denotes the number of boxes in
the same column of $s$ and below $s$ (the ``leg" of $s$).  The Plancherel measure in this case
is
$${\mathbb{P}}(\{\lambda\})=\dfrac{{\text{dim}}(\lambda)^2}{n!}.$$
By the hook formula (see, e.g., \cite{Sagan}) which
states that
$${\text{dim}}(\lambda)=\dfrac{n!}{\Pi_{s\in \lambda}h(s)},$$
where the product is over boxes in the partition and $h(s)$ is the
hooklength of a box $s$, we also have
\begin{equation}\label{plan11}{\mathbb{P}}(\{\lambda\})=\dfrac{n!}{\Pi_{s\in \lambda}h^2(s)}.\end{equation}

A random partition $\lambda$ chosen by the Plancherel measure
has interesting connections to the Gaussian unitary ensemble (GUE) of random matrix theory.
We recall that the joint probability density of the eigenvalues $x_1\ge x_2\ge\dots\ge x_n$ of the  Gaussian orthogonal ensemble (GOE), Gaussian unitary ensemble (GUE), and Gaussian symplectic ensemble (GSE)   is given by
\begin{equation}\label{rm11}
\dfrac{1}{Z_{\beta}}\exp\left(-\dfrac{x_{1}^2+\dots+x_{n}^2}{2}\right)\Pi_{1\le i<j\le n} (x_i-x_j)^{\beta}
\end{equation}
with $\beta=1,2,4$, respectively. Here $Z_{\beta}$ is a normalization constant.
Let $\pi$ be a permutation chosen from the uniform measure
of the symmetric group ${\cal{S}}_n$ and $l(\pi)$ the length of the
longest increasing subsequence in $\pi$.
\cite{BDJ} proved that $(l(\pi)-2\sqrt{n})/n^{1/6}$ converges to the Tracy-Widom distribution as $n \rightarrow \infty$.
It follows from the Robinson-Schensted-Knuth correspondence (see 
\cite{Sagan}) that the first row of a random partition distributed according to the
Plancherel measure has the same distribution as the longest increasing subsequence of a
random permutation distributed according to the uniform measure.
So the result of 
\cite{BDJ} says that a suitably normalized length of the first row of a random partition distributed according to
the Plancherel measure converges to the Tracy-Widom distribution.
 \cite{BOO},  \cite{Johansson}
proved that the joint distribution of suitably normalized lengths of the rows of a random partition distributed according to the Plancherel measure converges to the joint distribution of the eigenvalues $x_1\ge x_2\ge\dots\ge x_n$ of a $n\times n$ GUE matrix.

Jack$_{\alpha}$ measure is an extension of the Plancherel measure. For $\alpha>0$, the Jack$_{\alpha}$ measure is a probability measure
on the set of all partitions of a positive integer $n$, which chooses a partition
$\lambda$ with probability
$${\mathbb{P}}_{\alpha}\left(\{\lambda\}\right)=\dfrac{\alpha^n n!}
{\Pi_{s\in \lambda}(\alpha a(s)+l(s)+1)(\alpha a(s)+l(s)+\alpha)},$$
where the product is over all boxes in the partition. For
example, the partition
\begin{equation*}
\lambda=
\begin{array}{lrc}
\Box\  \ \Box\  \ \Box\\
\Box\ \ \Box\\
\Box
\end{array}
\end{equation*}
 of $6$ has Jack$_\alpha$ measure
 $$\dfrac{720\alpha^3}{(3\alpha+2)(2\alpha+3)(\alpha+2)^2(2\alpha+1)^2}.$$

We notice that the Jack measure with parameter $\alpha=1$ agrees the
Plancherel measure of the symmetric group since it coincides with \eqref{plan11}.
It is mentioned in \cite{Matsumoto08} that for any positive real number
$\beta> 0$, the Jack$_\alpha$ measure with $\alpha=2/\beta$ is the counterpart of the Gaussian $\beta$-ensemble
$(G\beta E)$ with the probability density function proportional to \eqref{rm11}.

Let $\lambda$ be a partition of $n$ chosen from the Plancherel
measure of the symmetric group ${\cal{S}}_n$, and
$\chi^{\lambda}(12)$ the character of the irreducible representation associated to
$\lambda$ evaluated on the transposition $(12)$.
Characters of the irreducible representations of a symmetric group are of interest in the literature because 
they play central roles in representation theory and other fields of mathematics such as random walks (\cite{DS})
and the moduli space of curves (\cite{EskinOkounkov}).  The quantity
$\chi^{\lambda}(12)/{\text{dim}}(\lambda)$, which is a normalization of $\chi^{\lambda}(12)$, is called a character ratio. As $\lambda$ is distributed according to the Plancherel measure, $\chi^{\lambda}(12)$ is a random variable.

In \cite{Kerov}, it is stated that
\begin{equation}\label{cr11}
\dfrac{\sqrt{{n
\choose 2}}\chi^{\lambda}(12)}{{\text{dim}}(\lambda)}
\end{equation}
is asymptotically normal with mean $0$ and variance $1$ as $n \rightarrow \infty$. A 
proof of Kerov's central limit theorem can be found in \cite{Hora}, which uses the
method of moments and combinatorics. More recently, a proof in \cite{Sniady} uses the genus expansion of
random matrix theory, and another in \cite{HoraObata} 
uses quantum probability.

By a formula due to \cite{Fro00} (see also \cite{Fulman06b}), we have
\begin{equation}\label{cr12}
\dfrac{\chi^{\lambda}(12)}{{\text{dim}}(\lambda)}=
\dfrac{1}{{n \choose 2}}\sum_{i}\left({{\lambda_i}\choose
2}-{{\lambda_{i}^{'}} \choose 2}\right).
\end{equation}
Now, for $\alpha>0$, the random variable we will study
in this paper is
\begin{equation}\label{J00}W_{n,\alpha}=W_{n,\alpha}(\lambda)=\dfrac{\sum_{i}\left(\alpha{{\lambda_i}
\choose 2}-{{\lambda_{i}^{'}} \choose 2}\right)}{\sqrt{\alpha{n
\choose 2}}},\end{equation} where $\lambda$ is chosen from the Jack$_\alpha$
measure on partitions of a positive integer $n$, $\lambda_i$ is the length of the
$i$-th row of $\lambda$ and $\lambda_{i}^{'}$ is the length of the
$i$-th column of $\lambda$. By \eqref{cr12}, $W_{n,\alpha}$ coincides with \eqref{cr11} when $\alpha=1$.
 Therefore, the value $W_{n,\alpha}$ is regarded as a Jack deformation of the character ratio. 
Moreover, as remarked by \cite{Fulman04}, when $\alpha=2$,
 $W_{n,2}$ is the value of a spherical function corresponding to the Gelfand pair $(S_{2n},H_{2n})$, where $H_{2n}$ is the hyperoctahedral group of size $2^nn!$.

Normally approximation for $W_{n,\alpha}$ has been studied by \cite{Fulman04,
Fulman06b}, \cite{ShaoSu}, and 
\cite{FulmanGoldstein} by using Stein's method (see, e.g., \cite{Stein86}).
In \cite{Fulman04}, the author proved that for any fixed $\alpha\ge 1$,
\begin{equation}\label{J02}\sup_{x\in {\mathbb{R}}}\left|{\mathbb{P}}_{\alpha}(W_{n,\alpha}\le
x)-\Phi(x)\right|\le \dfrac{C_\alpha}{n^{1/4}},\end{equation}
where $C_\alpha$ is a constant depending only on $\alpha$, $\Phi(x)=\dfrac{1}{\sqrt{2\pi}}\int_{-\infty}^{x}\exp(-t^2/2)\mathrm{d}t$ is
the distribution function of the standard normal distribution.

The bound  $C_\alpha n^{-1/4}$ was later improved in \cite{Fulman06b} to
$C_\alpha n^{-1/2}$ using an inductive approach to Stein's method.  We note that in all these results, $\alpha>0$ is fixed, but we do not know how $C_\alpha$ depends on $\alpha$. An explicit constant is obtained by
\cite{ShaoSu} only when $\alpha=1$. More precisely, when $\alpha=1$, 
\cite{ShaoSu} obtained the rate $761n^{-1/2}$ by using Stein's
method for exchangeable pairs. More recently, \cite{DolegaF}
proved the Berry-Esseen bound for the multivariate case with rate $C_\alpha n^{-1/4}$, and
\cite{DolegaS} proved a general multivariate central limit theorem
for the case where $\alpha=\alpha(n)$ varying with $n$, 
satisfying
\[\dfrac{-\sqrt{\alpha}+1/\sqrt{\alpha}}{\sqrt{n}}=g_1+\dfrac{g_2}{\sqrt{n}}+o\left(\dfrac{1}{\sqrt{n}}\right),\]
where $g_1$ and $g_2$ are constants.

\cite{Fulman04} conjectured that for general $\alpha\ge 1$, the correct bound is a universal constant multiplied by $\max\left\{\dfrac{1}{\sqrt{n}},\dfrac{\sqrt{\alpha}}{n}\right\}$. 
While this bound was conjectured for the Kolmogorov distance in \eqref{J02}, using Stein's method and zero-bias couplings, \cite{FulmanGoldstein} proved that it is indeed the correct bound for the Wasserstein distance for $W_{n,\alpha}$. By the result in \cite{FulmanGoldstein}, the central limit theorem for $W_{n,\alpha}$
holds for $\alpha=\alpha(n)$ varying with $n$ as long as $\sqrt{\alpha}/n\to 0$. 
As observed by \cite{Fulman04}, this is necessary for $W_{n,\alpha}$ to be asymptotically normal.
The bound conjectured by \cite{Fulman04} for the Kolmogorov distance remains unsolved as bounds on the Kolmogorov distance are usually harder to obtain than bounds on the Wasserstein
distance. This paper is an attempt to prove the conjecture of \cite{Fulman04} for the Kolmogorov distance. We use Stein's method and zero-bias couplings to obtain both uniform and non-uniform error bounds on the Kolmogorov distance for $W_{n,\alpha}$. We have obtained a uniform error bound which comes very close to that conjectured by \cite{Fulman04}. Besides, we have obtained a very small constant. As a by-product of the proof of the non-uniform bound, we obtain a Rosenthal-type inequality for zero-bias couplings.

Throughout this paper, $Z$ denotes the standard normal random variable and
$\Phi(x)=\dfrac{1}{\sqrt{2\pi}}\int_{-\infty}^{x}\exp(-t^2/2)\mathrm{d}t$ its distribution function.
For a positive number $x$,  $\log x$ denotes the natural logarithm of $x$, and 
$\lfloor x \rfloor$ denotes the greatest integer number that is less than or equal to $x$.
For a set $S$, the indicator
function of $S$ is denoted by ${\textbf{1}}(S)$ and the cardinality of $S$
denoted by $|S|$.
For $p\ge 1$ and a random variable $X$, $\left(\mathbb{E}|X|^p\right)^{1/p}$ is denoted by $\|X\|_p$. The symbol $C_{p}$ denotes a generic positive
constant 
which is bounded by $B^p$ for some universal constant $B$,
but can be different for
each appearance. We denote Jack$_\alpha$ measure by ${\mathbb{P}}_{\alpha}$.

\begin{thm}\label{Ja1} Let $n\ge 3$ be an integer. Let $\alpha>0$ and $W_{n,\alpha}$  be as in \eqref{J00}. Then
\begin{equation*}
\sup_{x\in {\mathbb{R}}}|{\mathbb{P}}_{\alpha}(W_{n,\alpha}\le
x)-\Phi(x)|\le 8.2\max\left\{\dfrac{1}{\sqrt{n}},\dfrac{\max\left\{\sqrt{\alpha},1/\sqrt{\alpha}\right\}\log n}{n}\right\}.
\end{equation*}
\end{thm}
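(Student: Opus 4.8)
The plan is to apply Stein's method via zero-bias couplings. Recall that if $W$ has mean zero and variance $\sigma^2$, a random variable $W^*$ is said to have the $W$-zero-bias distribution if $\mathbb{E}[W f(W)] = \sigma^2 \mathbb{E}[f'(W^*)]$ for all suitable $f$; the normal is the unique fixed point of this transformation, so $W^*$ close to $W$ forces $W$ close to normal. The first step is therefore to construct an explicit zero-bias coupling $(W_{n,\alpha}, W^*)$ for the Jack measure. Here I would follow Fulman--Goldstein: one realizes $W_{n,\alpha}$ through the sequential growth process for Jack measures (adding boxes one at a time with the appropriate transition probabilities), which gives a natural ``remove one box and re-add it'' exchangeable-pair-type construction, and from the exchangeable pair one obtains the zero-bias coupling with a quantitative bound of the form $|W^* - W| \le D$ for a small random quantity $D$ with $D$ essentially of order $1/\sqrt{n}$ plus a term reflecting the size of a randomly chosen row/column, which is where the $\max\{\sqrt\alpha,1/\sqrt\alpha\}$ enters. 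One must also control $\mathbb{E}[W^* - W \mid W]$ or the analogous conditional quantities, and verify the variance normalization $\operatorname{Var}(W_{n,\alpha}) = 1$ (this is a known identity for the Jack deformation, effectively $\sum_i(\alpha\binom{\lambda_i}{2} - \binom{\lambda_i'}{2})$ having the right second moment under $\mathbb{P}_\alpha$).

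The second step is the Kolmogorov-distance bound itself. The standard route is: for fixed $x$, solve the Stein equation $f_x'(w) - w f_x(w) = \mathbf{1}(w \le x) - \Phi(x)$ with the usual bounded solution $f_x$, then write
\[
\mathbb{P}_\alpha(W_{n,\alpha}\le x) - \Phi(x) = \mathbb{E}\big[f_x'(W_{n,\alpha}) - W_{n,\alpha} f_x(W_{n,\alpha})\big] = \mathbb{E}\big[f_x'(W_{n,\alpha}) - f_x'(W^*)\big],
\]
using the zero-bias identity in the last equality. Because $f_x'$ has a jump at $x$, the naive bound $\|f_x''\|_\infty \mathbb{E}|W^* - W|$ is unavailable; instead one uses the concentration-inequality technique (as in Chen--Shao), splitting $f_x'(W_{n,\alpha}) - f_x'(W^*)$ into a Lipschitz part, handled by $\mathbb{E}|W^* - W|$, and an indicator part $\mathbb{E}|\mathbf{1}(W_{n,\alpha} \le x < W^*) - \mathbf{1}(W^* \le x < W_{n,\alpha})|$, which is bounded by $\mathbb{P}(x - D \le W_{n,\alpha} \le x + D)$-type terms. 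These in turn are controlled by a concentration inequality for $W_{n,\alpha}$ — i.e. $\mathbb{P}(a \le W_{n,\alpha} \le b) \le C(b-a) + (\text{small})$ — which itself follows from another application of the zero-bias coupling and the boundedness of $D$. Assembling these contributions and optimizing yields a bound of order $\mathbb{E}|W^*-W| + (\text{concentration error})$, which after inserting the estimates on $D$ becomes $\max\{n^{-1/2},\ \max\{\sqrt\alpha,1/\sqrt\alpha\}\, (\log n)/n\}$ up to the explicit constant $8.2$; the logarithmic factor is the characteristic loss incurred when one lacks a uniform bound on the jump term and must instead iterate the concentration estimate, which is presumably why the result falls just short of Fulman's conjectured clean $\max\{n^{-1/2}, \sqrt\alpha/n\}$.

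The main obstacle, and the place where most of the work lies, is twofold: first, producing the zero-bias coupling with a \emph{sharp} and fully explicit bound on $|W^* - W|$ — in particular getting the correct $\alpha$-dependence $\max\{\sqrt\alpha, 1/\sqrt\alpha\}$ rather than a cruder power, and tracking constants tightly enough to land at $8.2$; and second, the concentration argument for $W_{n,\alpha}$, since to reach the $(\log n)/n$-type term (rather than something like $n^{-1/2}\log n$) one needs a self-improving/bootstrap iteration of the concentration bound, where at each stage the current bound on $\mathbb{P}(a\le W\le b)$ feeds back through the coupling to sharpen itself; controlling the number of iterations and the accumulated constants, and handling the moments of $D$ (which is presumably where the promised Rosenthal-type inequality for zero-bias couplings gets used, to bound higher moments of the relevant row/column lengths under $\mathbb{P}_\alpha$), is the delicate part. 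The small cases (small $n$, or $\alpha$ very large or very small relative to $n$) will need to be checked separately or absorbed into the constant, since for $\sqrt\alpha \gtrsim n$ the bound exceeds $1$ and is trivial.
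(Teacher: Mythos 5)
Your proposal correctly identifies the general framework (Stein's method with the Fulman--Goldstein zero-bias coupling built from Kerov's growth process, with $W^* = V_{n,\alpha} + \eta^*_{n,\alpha}$ and $T = \eta^*_{n,\alpha}-\eta_{n,\alpha}$), and it correctly flags the two real difficulties: $T$ is not uniformly bounded, and the jump in $f_x'$ blocks the naive smooth-metric argument. However, the core of your argument diverges from the paper, and I think your route, while not unworkable, misidentifies where the $\log n$ comes from and is substantially more complicated than what is actually done.

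The paper does \emph{not} run a Chen--Shao concentration-inequality bootstrap on $W_{n,\alpha}$. The key observation, which your proposal misses, is that $W^*$ always has an absolutely continuous law, so the Kolmogorov bound for $W^*$ has no jump problem at all: evaluating the Stein equation at $W^*$ and using the zero-bias identity gives
$|{\mathbb{P}}(W^*\le x)-\Phi(x)| = |{\mathbb{E}}Wf_x(W) - {\mathbb{E}}(W+T)f_x(W+T)|$,
which is bounded by $\bigl(1+\tfrac{\sqrt{2\pi}}{4}\bigr)\sqrt{{\mathbb{E}}T^2}$ just from $\|f_x\|_\infty$, $\|f_x'\|_\infty$, and Cauchy--Schwarz (Theorem~\ref{star}(i)). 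One then transfers this to $W$ by a one-step truncation: $\{W\le x\}=\{W^*\le x+T\}$, so splitting on $|T|\le\varepsilon$ versus $|T|>\varepsilon$ and using that $\Phi$ is $1/\sqrt{2\pi}$-Lipschitz gives
$\sup_x|{\mathbb{P}}(W\le x)-\Phi(x)| \le \bigl(1+\tfrac{\sqrt{2\pi}}{4}\bigr)\sqrt{{\mathbb{E}}T^2} + \varepsilon/\sqrt{2\pi} + {\mathbb{P}}(|T|>\varepsilon)$
(Theorem~\ref{nonstar}(i)). This replaces your iterated concentration argument entirely. Consequently, your explanation for the $\log n$ is wrong: it is not ``the characteristic loss incurred'' by bootstrapping a concentration estimate. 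It comes from the choice of truncation level $\varepsilon$, which must be taken as large as $\asymp \max\{1/\sqrt{n},\ \sqrt{\alpha}\log n / n\}$ so that the tail ${\mathbb{P}}(|T|>\varepsilon)$ is negligible; that tail is controlled via the estimate ${\mathbb{P}}_\alpha(\lambda_1 = l)\le \tfrac{\alpha}{2\pi}(ne^2/(\alpha l^2))^l$, whose geometric decay forces the logarithmic threshold (Lemmas~\ref{Fu92} and~\ref{Ful13}). Finally, the Rosenthal-type inequality you invoke is not used for the uniform bound at all; it enters only in the non-uniform Theorem~\ref{Ja2} (via Theorem~\ref{star}(ii)) where one needs $2p$-th and $(2p+2)$-th moments of $W$. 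For Theorem~\ref{Ja1} only $\mathbb{E}T^2$ (from ${\mathbb{E}}\eta_{n,\alpha}^2$ and ${\mathbb{E}}(\eta^*_{n,\alpha})^2 = {\mathbb{E}}\eta_{n,\alpha}^4/(3\,{\mathbb{E}}\eta_{n,\alpha}^2)$) and the tail of $T$ are needed. The case $0<\alpha<1$ then follows by the transpose symmetry ${\mathbb{P}}_\alpha(\lambda)={\mathbb{P}}_{1/\alpha}(\lambda^t)$ and $W_{n,\alpha}(\lambda)=-W_{n,1/\alpha}(\lambda^t)$, a reduction your proposal does not mention.
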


\begin{rem}\label{rm1} {\rm If $\dfrac{\log^2n}{n}\le \alpha\le \dfrac{n}{\log^2n}$, then the bound in \thmref{Ja1} is $\dfrac{8.2}{\sqrt{n}}$.  
For $\alpha\ge 1$, the bound in \thmref{Ja1} is $8.2\max\left\{\dfrac{1}{\sqrt{n}},\dfrac{\sqrt{\alpha}\log n}{n}\right\}$, 
which is very close to that conjectured by \cite{Fulman04}}. 
\end{rem}

We prove \thmref{Ja1} by using Stein's method for zero bias couplings. Non-uniform bounds on the Kolmogorov distance
 in the normal approximation for independent random variables using Stein's method were first investigated by \cite{ChenShao01}. 
 Stein's method has also been used to study non-uniform  bounds on the Kolmogorov distance (\cite{ChenShao04}) and concentration inequalities 
 (\cite{CD10}) for dependent random
variables.
The method developed in this paper also allows us to obtain a non-uniform bound on the Kolmogorov distance, which we state in the following theorem.

\begin{thm}\label{Ja2} Let $n\ge 3$ be an integer. Let $p\ge 2$, $1/n^2<\alpha<n^2$ and $W_{n,\alpha}$ be as in \eqref{J00}. Then for all $x\in {\mathbb{R}}$, we have
\begin{equation*}
|{\mathbb{P}}_{\alpha}(W_{n,\alpha}\le
x)-\Phi(x)|\le \dfrac{C_p}{1+|x|^p}\left(\dfrac{p^2}{\log p}\right)^p\max\left\{\dfrac{1}{\sqrt{n}},\dfrac{\max\{\sqrt{\alpha},1/\sqrt{\alpha}\}\log n}{n}\right\}.
\end{equation*}
\end{thm}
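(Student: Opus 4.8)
The plan is to apply Stein's method with the zero-bias coupling $(W,W^{\ast})$ for $W=W_{n,\alpha}$ that underlies the proof of \thmref{Ja1} (the Fulman--Goldstein coupling, or a refinement of it, for which the increment $D:=W^{\ast}-W$ has explicitly controllable $L^{q}$-norms and tails in terms of $n$ and $\alpha$), while now tracking the dependence on $x$ through the non-uniform estimates for the Stein solution. Since $\mathrm{Var}_{\alpha}(W_{n,\alpha})=1$, the zero-bias identity applied to the solution $f_{x}$ of $f_{x}'(w)-wf_{x}(w)=\mathbf{1}(w\le x)-\Phi(x)$ gives
\begin{equation*}
{\mathbb P}_{\alpha}(W\le x)-\Phi(x)={\mathbb E}\bigl[f_{x}'(W)-f_{x}'(W^{\ast})\bigr],\qquad\text{hence}\qquad\bigl|{\mathbb P}_{\alpha}(W\le x)-\Phi(x)\bigr|\le{\mathbb E}\bigl|f_{x}'(W)-f_{x}'(W+D)\bigr|.
\end{equation*}
By the symmetry $\lambda\leftrightarrow\lambda'$, $\alpha\leftrightarrow1/\alpha$, $W_{n,\alpha}\leftrightarrow-W_{n,1/\alpha}$ --- which leaves $1+|x|^{p}$ and $\max\{\sqrt{\alpha},1/\sqrt{\alpha}\}$ invariant --- it suffices to treat $x\ge0$. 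I would then bound $|f_{x}'(W)-f_{x}'(W+D)|$ by the Chen--Shao perturbation estimates for $f_{x}'$: a Lipschitz-type contribution governed by $(1+|W|)(|D|\wedge1)$ away from the jump of $f_{x}'$ at $x$, plus an extra unit term $\mathbf{1}\bigl(W\wedge(W+D)\le x<W\vee(W+D)\bigr)$ when $W$ and $W+D$ straddle $x$; the non-uniform character comes from the decay in $|x|$ of $\|f_{x}\|_{\infty}$ and of $f_{x}$, $f_{x}'$ on the region $\{w\le x/2\}$.

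To control the far-field range of $x$ I would first establish that $W_{n,\alpha}$ has well-behaved moments, via a Rosenthal-type inequality for zero-bias couplings (the advertised by-product). Applying the zero-bias identity to $f(w)=|w|^{p-1}\mathrm{sgn}(w)$ gives ${\mathbb E}|W|^{p}=(p-1){\mathbb E}|W^{\ast}|^{p-2}\le(p-1)\bigl(\|W\|_{p-2}+\|D\|_{p-2}\bigr)^{p-2}$, which unfolds into a recursion along $p,p-2,p-4,\dots$; summing it with care about the constants yields a bound of the shape
\begin{equation*}
\|W\|_{p}\le C\,\frac{p}{\log p}\,\max\Bigl\{\sqrt{\mathrm{Var}(W)},\ \max_{2\le q\le p}\|D\|_{q}\Bigr\}+C\,(\text{lower-order terms}),
\end{equation*}
the factor $p/\log p$ being the sharp Rosenthal constant extracted by optimizing the recursion. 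Feeding in the $L^{q}$-estimates for the Jack coupling increment (valid for $1/n^{2}<\alpha<n^{2}$) then gives $\|W_{n,\alpha}\|_{p}\le C\,p/\log p$, hence ${\mathbb P}_{\alpha}(|W|\ge t)\le(Cp/\log p)^{p}t^{-p}$; this furnishes the $|x|^{-p}$ decay and contributes one factor $(p/\log p)^{p}$ to the prefactor, the remaining powers of $p$ assembling into $(p^{2}/\log p)^{p}$ coming from the $L^{p}$-control of $D$ and from the Stein-solution bounds.

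The hard part will be the contribution of the straddling indicator, which is at most $\mathbf{1}(|W-x|\le|D|)$, because the Jack coupling increment $D$ is \emph{not} bounded: in the Fulman--Goldstein construction $|D|$ scales with the largest row/column length of $\lambda$, which is typically of order $\sqrt{n}$ but can be of order $n$. I would split on the event $\{|D|\le\delta_{n,\alpha}\}$ with $\delta_{n,\alpha}$ of order $\max\{1/\sqrt{n},\ \max\{\sqrt{\alpha},1/\sqrt{\alpha}\}(\log n)/n\}$: on its complement, a tail bound for $|D|$ (equivalently for $\lambda_{1}\vee\lambda_{1}'$, obtainable from the moment bounds above) makes the contribution negligible, and on the event itself a concentration inequality for zero-bias couplings of Chen--Shao type, ${\mathbb P}_{\alpha}(a\le W\le b)\le C\bigl(b-a+{\mathbb E}|D|\bigr)$, bounds ${\mathbb E}\,\mathbf{1}(|W-x|\le\delta_{n,\alpha})$ by $C\,\delta_{n,\alpha}$. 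The logarithm in the statement is exactly the price of this truncation, and it attaches only to the term $\max\{\sqrt{\alpha},1/\sqrt{\alpha}\}/n$ because that is the regime in which $|D|$ has a genuinely heavy tail. For the non-uniform bound this term must in addition be shown to decay like $|x|^{-p}$: for large $|x|$ one replaces $\mathbf{1}(|W-x|\le|D|)$ by $\mathbf{1}(|W|\ge|x|/2)+\mathbf{1}(|D|\ge|x|/2)$ and uses the tail bounds, while for moderate $|x|$ one uses the concentration inequality, and interpolating these two ranges is what ties the $|x|^{-p}$ decay to the $p$-dependent prefactor.

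Finally, collecting the smooth remainder --- bounded by ${\mathbb E}|D|+{\mathbb E}\bigl[|W|\,(|D|\wedge1)\bigr]+{\mathbb E}|D|^{2}$ and their $x$-localized refinements, all controlled by the $L^{q}$-bounds on $D$ and the moment bounds on $W$ --- together with the anti-concentration term, and using $n\ge3$ and $1/n^{2}<\alpha<n^{2}$ to absorb lower-order contributions into the leading rate, gives the bound asserted in \thmref{Ja2}. Beyond \thmref{Ja1}, the real work is the bookkeeping required to keep the Rosenthal constant at the sharp order $p/\log p$ and to propagate the $|x|^{-p}$ decay cleanly through the truncation and the Stein-solution estimates.
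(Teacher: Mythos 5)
Your ingredients are the right ones (Rosenthal inequality for zero-bias couplings, truncation of $T=W^{*}-W$, tail bounds, and your identity ${\mathbb P}_{\alpha}(W\le x)-\Phi(x)={\mathbb E}[f_{x}'(W)-f_{x}'(W^{*})]$ is correct), but your decomposition differs from the paper's in a way that opens a genuine gap. You attack ${\mathbb P}(W\le x)-\Phi(x)$ directly, which forces you to manage the jump of $f_{x}'$ at $x$ through a straddling indicator and then to invoke a concentration inequality ${\mathbb P}_{\alpha}(a\le W\le b)\le C(b-a+{\mathbb E}|T|)$ for the zero-bias coupling. You neither prove this nor cite a version that applies here: the standard zero-bias concentration inequalities assume $|T|$ bounded almost surely, while in the Jack coupling $|T|$ can be of order $\sqrt{\alpha}$, and the inequality you state with an ${\mathbb E}|T|$ term (rather than an almost-sure bound) would itself have to be established by a truncation-plus-Kolmogorov argument --- which is, in substance, the very bound you are trying to prove. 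Moreover, for the non-uniform statement you need a \emph{weighted} version of this inequality decaying like $|x|^{-p}$, and you only sketch this as an ``interpolation'' between two $x$-ranges without showing how the $(p^{2}/\log p)^{p}$ prefactor survives; this is exactly where the bookkeeping is delicate and where a sketch is not enough.

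The paper sidesteps all of this by reversing the roles of $W$ and $W^{*}$: it first bounds $|{\mathbb P}(W^{*}\le x)-\Phi(x)|$ directly, using ${\mathbb P}(W^{*}\le x)-\Phi(x)={\mathbb E}[Wf_{x}(W)-W^{*}f_{x}(W^{*})]=-{\mathbb E}\int_{0}^{T}g_{x}(W+t)\,\mathrm{d}t$ with $g_{x}=(wf_{x})'$. Because only the smooth function $g_{x}$ appears, there is no jump and no need for a concentration inequality; the $|x|^{-p}$ decay comes from splitting the integral at the event $\{W+t>x/2\}$, where the indicator is dominated by $C_{p}(|W|^{p}+|T|^{p})/(1+x^{p})$ and the Rosenthal bounds take over, and on the complement $g_{x}(W+t)\le 2(1-\Phi(x))+g_{x}(x/2)$, which is exponentially small in $x^{2}$. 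It then transfers to $W$ with the elementary inequality ${\mathbb P}(W\le x)\le{\mathbb P}(W^{*}\le x+\varepsilon)+{\mathbb P}(W^{*}-W>\varepsilon)$, controlling the last term via Cauchy--Schwarz and the already-established moment bounds on $W^{*}$. One further slip in your plan: you say the tail bound for $|T|$ is ``obtainable from the moment bounds above,'' but the logic runs the other way --- the paper first derives a sharp tail bound for $T_{n,\alpha}$ (Lemma 4.4) from Fulman's estimate ${\mathbb P}_{\alpha}(\lambda_{1}=l)\le(n/\alpha)^{l}\alpha l/(l!)^{2}$ together with Stirling, and it is this tail bound, combined with the deterministic bound $|\eta_{n,\alpha}|\le\sqrt{2\alpha}$, that produces the moment estimate ${\mathbb E}|T_{n,\alpha}|^{2p+2}\le C_{p}p^{2p}\max\{1/\sqrt{n},\sqrt{\alpha}\log n/n\}^{2}$ used in the final step.
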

\begin{rem} {\rm
If $\alpha\ge n^2$ and $p\ge 2$, then it will be shown in the appendix that 
\begin{equation}\label{rem_add_01}
{\mathbb{E}}\left| W_{n,\alpha}\right|^p \le C_p\left(\dfrac{p^2}{\log p}\right)^p\left(\dfrac{\sqrt{\alpha}}{n}\right)^{p-2}.
\end{equation}
Therefore, by applying Markov's inequality, $|{\mathbb{P}}_{\alpha}(W_{n,\alpha}\le
x)-\Phi(x)|$ is bounded by 
\[\dfrac{C_p}{1+|x|^p}\left(\dfrac{p^2}{\log p}\right)^p\left(\dfrac{\sqrt{\alpha}}{n}\right)^{p-2}.\]
}
\end{rem}

\section{A Rosenthal-type inequality for zero-bias couplings}\label{sec:Rosenthal}

It was shown in \cite{GoldsteinReinert97} that
for any mean zero random variable $W$ with positive finite variance
$\sigma^2$, there exists a random variable $W^{*}$ which satisfies
\begin{equation}\label{zer}
{\mathbb{E}}Wf(W)=\sigma^2{\mathbb{E}}f^{'}(W^{*})
\end{equation} 
for all absolutely
continuous $f$ with ${\mathbb{E}}|Wf(W)|<\infty$.  The random variable $W^{*}$ and its distribution are called  $W$-zero biased. 
\cite{GoldsteinReinert97} (see also in 
\cite[Proposition 2.1]{CGS}) showed that the distribution of $W^{*}$
is absolutely continuous with the density
$g(x)={\mathbb{E}}[W{\textbf{1}}(W > x)]/\sigma^2.$

In this section, we prove a
Rosenthal-type inequality for zero-bias couplings, which we state as a proposition below. We will show later
that this proposition can be applied to obtain the Rosenthal inequality for sums of independent random variables. 
The use of a Rosenthal-type inequality is crucial for obtaining a non-uniform bound on the Kolmogorov distance.

\begin{prop}\label{ros}
 Let $W$ be a random variable with mean zero and variance $\sigma^2 > 0$ and let $W^{*}$ be 
$W$-zero biased. Assume that $W$ and $W^*$ are defined on the same probability space.
Let $T =W^{*}-W$. Then for every $p\ge 2$,
\begin{equation}\label{zer1}{\mathbb{E}}|W|^p\le
\kappa_p \left(\sigma^p+\sigma^2{\mathbb{E}}|T|^{p-2}\right),
\end{equation}
where 
\[\kappa_p=\dfrac{\left(\log 8\right)^3}{196}\left(\dfrac{7p}{4\log p}\right)^p.\]
\end{prop}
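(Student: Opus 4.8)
The plan is to exploit the defining relation \eqref{zer} with a carefully chosen test function $f$ so that $\mathbb{E}Wf(W)$ reproduces a moment of $W$ while $\mathbb{E}f'(W^*)$ is controlled by moments of $W^*$, hence of $W$ and $T=W^*-W$. The natural choice is $f(w)=w|w|^{p-2}$ (or its truncation, to keep $f$ absolutely continuous and the expectations finite); then $Wf(W)=|W|^p$ and $f'(w)=(p-1)|w|^{p-2}$, so \eqref{zer} gives
\begin{equation*}
\mathbb{E}|W|^p=(p-1)\,\sigma^2\,\mathbb{E}|W^*|^{p-2}.
\end{equation*}
Writing $W^*=W+T$ and applying the elementary inequality $|a+b|^{p-2}\le 2^{p-3}(|a|^{p-2}+|b|^{p-2})$ for $p\ge 3$ (and a direct argument for $2\le p<3$), one obtains
\begin{equation*}
\mathbb{E}|W|^p\le (p-1)2^{p-3}\sigma^2\bigl(\mathbb{E}|W|^{p-2}+\mathbb{E}|T|^{p-2}\bigr).
\end{equation*}
This is a recursion that lowers the exponent by $2$ at each step, at the cost of a factor roughly $(p-1)2^{p-3}\sigma^2$, and it bottoms out at $\mathbb{E}|W|^2=\sigma^2$ or $\mathbb{E}|W|^1\le\sigma$.

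The next step is to iterate this recursion. Starting from exponent $p$ and descending by $2$, after $k$ steps one reaches exponent $p-2k$; iterating about $p/2$ times and using $\mathbb{E}|T|^{q}$ controlled by $\mathbb{E}|T|^{p-2}$ for $q\le p-2$ via Jensen together with $\mathbb{E}|T|\le\sigma$-type normalization (or by Lyapunov interpolation against $\sigma^2\le$ something — here one must be slightly careful and instead bound each $\mathbb{E}|T|^{q}$ appearing by a mixture of $\sigma^{q}$ and $\mathbb{E}|T|^{p-2}$, or better, unfold the recursion keeping all terms and collect), one arrives at a bound of the form $\mathbb{E}|W|^p\le (\text{const})^p\,\Gamma\text{-type factor}\,\bigl(\sigma^p+\sigma^2\mathbb{E}|T|^{p-2}\bigr)$. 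The combinatorial bookkeeping of the unfolded recursion produces a product of the constants $(p-1)2^{p-3},(p-3)2^{p-5},\dots$, whose product over the $\sim p/2$ levels is of order $2^{p^2/4}$-ish unless one is cleverer; to get the claimed $(7p/(4\log p))^p$ one should not descend all the way by steps of $2$ but rather split the interval $[2,p]$ dyadically — i.e. bound $\mathbb{E}|W|^p$ in terms of $\mathbb{E}|W|^{p/2}$ (not $\mathbb{E}|W|^{p-2}$) by applying Hölder/interpolation inside the recursion — so that only $\log_2 p$ iterations are needed. Each such halving step costs a factor that is polynomial in $p$, and $(\text{poly}(p))^{\log_2 p}$ is of the stated order $\exp(O(p\log p/\log\log p))$... but since the target is $(7p/4\log p)^p$, the precise tradeoff is: run the step-by-$2$ recursion but group the factors and estimate $\prod_{j}(p-2j)\le$ (something like) $(p!!)$ and then use $p!!\le (p/e)^{p/2}\cdot\text{poly}$, combined with $2^{p^2/4}$ being too big — so in fact the efficient route is the interpolation-to-$p/2$ one, giving the $\log p$ in the denominator.

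The main obstacle will be the bookkeeping in the iteration: making the constants line up to the explicit $\kappa_p=\frac{(\log 8)^3}{196}\bigl(\frac{7p}{4\log p}\bigr)^p$ rather than a cruder $C^{p^2}$ bound. Concretely, the key trick is to not decrement by $2$ each time but to interpolate: for the recursion step write $\mathbb{E}|W^*|^{p-2}$ and bound it by Hölder's inequality as $\le (\mathbb{E}|W^*|^{2r})^{(p-2)/(2r)}$ or split $W^*=W+T$ and use a weighted Young's inequality $\sigma^2\mathbb{E}|W|^{p-2}\le \varepsilon\,\mathbb{E}|W|^p+C(\varepsilon)\sigma^p$ to absorb the $\mathbb{E}|W|^{p-2}$ term back into the left side — this absorption, done with $\varepsilon$ chosen of order $1$ (say $\varepsilon=1/2$), kills the runaway product and leaves only the geometric accumulation of a bounded-per-step constant, at the price of one factor of $(p-1)2^{p-3}$-type for the $\mathbb{E}|T|^{p-2}$ term. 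Tracking that single surviving constant, together with the Young-inequality constant $C(1/2)$ raised to a power, and optimizing, yields the factor $(7p/(4\log p))^p$; the numerical constants $(\log 8)^3/196$ come from being careful at the base cases $p\in[2,4]$ and from the explicit choice of $\varepsilon$. I would set up the Young inequality step first, verify the self-improving bound $\mathbb{E}|W|^p\le 2(p-1)2^{p-3}\sigma^2\mathbb{E}|T|^{p-2}+C\sigma^p$, and only then chase the constants to the stated form.
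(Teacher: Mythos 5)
Your opening move matches the paper's exactly: with $f(w)=w|w|^{p-2}$ the zero-bias identity gives $\mathbb{E}|W|^p=(p-1)\sigma^2\mathbb{E}|W+T|^{p-2}$, and you correctly see that the symmetric split $|W+T|^{p-2}\le 2^{p-3}(|W|^{p-2}+|T|^{p-2})$ produces a step-by-two recursion whose blind iteration accumulates something like $2^{p^2/4}$, far worse than the target $\bigl(\tfrac{7p}{4\log p}\bigr)^p$.

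However, the fix you propose does not close this gap. Your Young-inequality absorption
\[
\sigma^2\,\mathbb{E}|W|^{p-2}\le \varepsilon\,\mathbb{E}|W|^p+C(\varepsilon)\sigma^p
\]
is fed into a recursion carrying the prefactor $(p-1)2^{p-3}$, so absorption requires $(p-1)2^{p-3}\varepsilon<1$, which forces $\varepsilon$ to be exponentially small in $p$; the Young constant then blows up, $C(\varepsilon)\sim\bigl((p-1)2^{p-3}\bigr)^{(p-2)/2}\sim 2^{p^2/2}$, and you are back to a super-exponential bound. Taking $\varepsilon$ of order one, as you suggest, does not absorb at all once $p>3$, so the ``self-improving'' inequality $\mathbb{E}|W|^p\le 2(p-1)2^{p-3}\sigma^2\mathbb{E}|T|^{p-2}+C\sigma^p$ does not follow. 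The dyadic-interpolation variant you mention is a different proof strategy altogether, and you neither carry it out nor show it avoids the same problem.

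What the paper actually does is keep the step-by-two descent but replace the symmetric $2^{p-3}$ split by the weighted Jensen split
\[
\mathbb{E}|W+T|^{p-2}\le \frac{\mathbb{E}|W|^{p-2}}{\theta^{p-3}}+\frac{\mathbb{E}|T|^{p-2}}{(1-\theta)^{p-3}},\qquad 0<\theta<1,
\]
and choose $\theta=\theta(p)=\bigl(\log^2(p-1)/(4(p-1))\bigr)^{1/(p-3)}$, which tends to $1$ with $1-\theta\asymp(\log p)/p$. This makes the coefficient multiplying $\mathbb{E}|W|^{p-2}$ only of order $p^2/\log^2 p$, which is exactly what the ratio $\kappa_p/\kappa_{p-2}$ can beat (\lemref{lem_ap02}); the coefficient of $\mathbb{E}|T|^{p-2}$ is pushed up to order $(p/\log p)^{p-3}$, but it appears only once per induction step and is absorbed directly into $\kappa_p$ (\lemref{lem_ap01}). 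The induction on $p$ in steps of $2$ then closes, with the intervals $p\in(2,4]$, $(4,6]$, $(6,8]$ handled by explicit estimates. A second ingredient you are missing is the conversion
\[
\sigma^2\,\mathbb{E}|T|^{p-4}\le \sigma^{p-2}+\mathbb{E}|T|^{p-2},
\]
obtained from $x^\alpha y^{1-\alpha}\le x+y$, which turns the lower-order $T$-moment produced by the inductive hypothesis back into the target form $\sigma^{p-2}+\mathbb{E}|T|^{p-2}$. Without the asymmetric weight $\theta(p)\to 1$ and this conversion, the recursion cannot yield the claimed $\kappa_p$.
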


\begin{proof}
Let 
\begin{equation}  \label{f-R}
f(x)=
\begin{cases}
x^{p-1}&\text{ if  } x\ge0,\\
-(-x)^{p-1} &\text{ if  }x<0.\\
\end{cases}
\end{equation}
Then $f^{'}(x)=(p-1)|x|^{p-2}$ and $xf(x)=|x|^p$.

If $2\le p\le 4$, then 
\begin{equation}\label{zer21}
\begin{split} {\mathbb{E}}|W|^p&={\mathbb{E}}Wf(W)=\sigma^2
{\mathbb{E}}f^{'}(W+T )\\
&=\sigma^2(p-1){\mathbb{E}}|W+T |^{p-2}\\
&\le \sigma^2(p-1)\max\{1,2^{p-3}\}\left({\mathbb{E}}|W|^{p-2}+{\mathbb{E}}|T|^{p-2}\right)\\
&\le \sigma^2(p-1)\max\{1,2^{p-3}\}\left(\sigma^{p-2}+{\mathbb{E}}|T|^{p-2}\right)\\
&=(p-1)\max\{1,2^{p-3}\}\left(\sigma^p+\sigma^2{\mathbb{E}}|T|^{p-2}\right). 
\end{split}
\end{equation} 
Elementary calculus shows that 
\[(p-1)\max\{1,2^{p-3}\} \le \dfrac{\left(\log 8\right)^3}{196}\left(\dfrac{7p}{4\log p}\right)^p\]
for all $2\le p\le 4$. Therefore, from
\eqref{zer21}, we see that \eqref{zer1} holds for
all $2\le p \le 4$.

If $p>4$, by
Jensen's inequality, we have for all $0<\theta<1$,
\begin{equation}\label{zer23e}
\begin{split} 
{\mathbb{E}}|W|^p&={\mathbb{E}}Wf(W)=\sigma^2
{\mathbb{E}}f^{'}(W+T )\\
&=\sigma^2(p-1){\mathbb{E}}|W+T |^{p-2}\\
&\le  \sigma^2(p-1)\left(\theta {\mathbb{E}}\left(\dfrac{|W|}{\theta}\right)^{p-2}+ (1-\theta)
{\mathbb{E}}\left(\dfrac{|T|}{1-\theta}\right)^{p-2}\right)\\
&=  \sigma^2(p-1)\left(\dfrac{{\mathbb{E}}|W|^{p-2}}{\theta^{p-3}}+ \dfrac{{\mathbb{E}}|T|^{p-2}}{(1-\theta)^{p-3}}\right).
\end{split}
\end{equation} 
By using following inequality 
\begin{equation}\label{a10}
x^{\alpha}y^{1-\alpha}\le x+y \text{ for all
}0<\alpha<1,x\ge0,y\ge0,
\end{equation}
we have
\begin{equation}\label{zer23b}
\begin{split} 
{\mathbb{E}}\left(\sigma^2 |T|^{p-4}\right)&={\mathbb{E}}\left((\sigma^{p-2})^{\frac{2}{p-2}}(|T|^{p-2})^{\frac{p-4}{p-2}}\right)\\
&\le {\mathbb{E}}\left(\sigma^{p-2}+|T|^{p-2}\right)\\
&= \sigma^{p-2}+{\mathbb{E}}|T|^{p-2}.
\end{split}
\end{equation}

For the case where $4< p\le 6$, 
\eqref{zer21} and \eqref{zer23b} yield
\begin{equation}\label{zer23c}
\begin{split} 
{\mathbb{E}}|W|^{p-2}&
\le (p-3)\max\{1,2^{p-5}\}\left(\sigma^{p-2}+\sigma^2{\mathbb{E}}|T|^{p-4}\right)\\
&\le (p-3)\max\{1,2^{p-5}\}\left(2\sigma^{p-2}+{\mathbb{E}}|T|^{p-2}\right).
\end{split}
\end{equation}
By letting $\theta:=\theta_1=1/2$, we have from \eqref{zer23e} that
\begin{equation}\label{zer23}
\begin{split} 
{\mathbb{E}}|W|^{p}&\le \sigma^2(p-1)2^{p-3}\left({\mathbb{E}}|W|^{p-2}+{\mathbb{E}}|T|^{p-2}\right).
\end{split}
\end{equation} 
Combining \eqref{zer23} and \eqref{zer23c}, we obtain
\begin{equation}\label{zer23d}
\begin{split}  
{\mathbb{E}}|W|^p &\le (p-1)(p-3)2^{p-2}\max\{1,2^{p-5}\}\left(\sigma^{p}+\sigma^2{\mathbb{E}}|T|^{p-2}\right).
\end{split}
\end{equation} 
Numerical calculations show that 
\[(p-1)(p-3)2^{p-2}\max\{1,2^{p-5}\} \le \dfrac{\left(\log 8\right)^3}{196}\left(\dfrac{7p}{4\log p}\right)^p\]
for all $4< p\le 6$. Therefore, from
\eqref{zer23d}, we see that \eqref{zer1} holds in this case.

For the case where $6< p\le 8$, \eqref{zer23b} and \eqref{zer23d} yield
\begin{equation}\label{zer23g}
\begin{split} 
{\mathbb{E}}|W|^{p-2}&
\le (p-3)(p-5)2^{p-4}\max\{1,2^{p-7}\}\left(2\sigma^{p-2}+{\mathbb{E}}|T|^{p-2}\right).
\end{split}
\end{equation}
By letting $\theta:=\theta_2=2/3$, we have from \eqref{zer23e} that
\begin{equation}\label{zer23f}
\begin{split} 
{\mathbb{E}}|W|^{p}&\le \sigma^2(p-1)\left(\dfrac{3}{2}\right)^{p-3}\left({\mathbb{E}}|W|^{p-2}+2^{p-3}{\mathbb{E}}|T|^{p-2}\right).
\end{split}
\end{equation} 
Combining \eqref{zer23g} and \eqref{zer23f}, we obtain
\begin{equation}\label{zer23h}
\begin{split}  
{\mathbb{E}}|W|^p &\le (p-1)(p-3)(p-5)3^{p-3}\max\{1,2^{p-7}\}\left(\sigma^{p}+\sigma^2{\mathbb{E}}|T|^{p-2}\right).
\end{split}
\end{equation} 
Numerical calculations also show that 
\[(p-1)(p-3)(p-5)3^{p-3}\max\{1,2^{p-7}\} \le \dfrac{\left(\log 8\right)^3}{196}\left(\dfrac{7p}{4\log p}\right)^p\]
for $6< p\le 8$. Therefore, from
\eqref{zer23h}, we see that \eqref{zer1} holds in this case.

For the case where $p>8$, we prove the result by induction.  Assume that \eqref{zer1} holds for $p-2$.
By
induction and \eqref{zer23b}, we have 
\begin{equation}\label{zer24}
\begin{split} 
{\mathbb{E}}|W|^{p-2}
&\le \kappa_{p-2}\left(\sigma^{p-2}+\sigma^2{\mathbb{E}}|T|^{p-4}\right)\\
&\le \kappa_{p-2}\left(2\sigma^{p-2}+{\mathbb{E}}|T|^{p-2}\right).
\end{split}
\end{equation}
Combining \eqref{zer23e} and \eqref{zer24},
we obtain
\begin{equation}\label{zer25}
\begin{split}  
{\mathbb{E}}|W|^p
&\le  (p-1)\left(\dfrac{2\kappa_{p-2} }{\theta^{p-3}}\sigma^{p}+\left(\dfrac{\kappa_{p-2} }{\theta^{p-3}}+ \dfrac{1}{(1-\theta)^{p-3}}\right)\sigma^2{\mathbb{E}}|T|^{p-2}\right).
\end{split}
\end{equation}
The proof is completed if we can choose $0<\theta<1$ such that
\begin{equation}\label{a06}
\begin{split}  
\dfrac{2(p-1)\kappa_{p-2} }{\theta^{p-3}} \le \kappa_p \ \text{ and }\ \dfrac{p-1}{(1-\theta)^{p-3}}\le \dfrac{\kappa_{p} }{2}.
\end{split}
\end{equation}
By \lemref{lem_ap02} in the Appendix, we have
	\begin{equation}\label{a05}
\begin{split}
\kappa_p
&\ge 8\left(\dfrac{p-1}{\log (p-1)}\right)^2\kappa_{p-2}.
\end{split}
\end{equation}
Let
\[\theta=\theta(p):=\left(\dfrac{\log^2(p-1)}{4(p-1)}\right)^{1/(p-3)}.\]
Then $0<\theta<1$ and the first half of \eqref{a06} holds by \eqref{a05}.
By \lemref{lem_ap01} (in the Appendix), the second half of \eqref{a06} holds.

The proof of the proposition is completed.
\end{proof}

We now present a simple proof of
the Rosenthal inequality (\cite{Rosenthal}) for sums of mean zero independent random variables by
using \propref{ros}. 	If $\{X_i,1\le i\le n\}$ are independent symmetric random variables, \cite{JSZ} proved that 
\begin{equation}\label{ro1x}
\left\|\sum_{i=1}^nX_i\right\|_p\le \dfrac{Kp}{\log p} \max\left\{\left\|\sum_{i=1}^nX_i\right\|_2,\left(\sum_{i=1}^n \|X_i\|_{p}^{p}\right)^{1/p}\right\}\ 
\text{ for all } p\ge 2,
\end{equation}
where $K$ is a universal constant satisfying $\dfrac{1}{e\sqrt{2}}\le K \le 7.35$.
\cite{JSZ} also proved that the rate $p/\log p$ is optimal.
\cite{Latala} showed that \eqref{ro1x} holds with $K$ approximately equal to $2e$ (see Theorem 2 and Corrolary 3 in \cite{Latala}).
In \cite{Ibragimov}, the authors
proved that the constant $K$ in
\eqref{ro1x} is approximated $1/e$ when $p$ large enough (see the Corrolary in page 259 in \cite{Ibragimov}).
However, we are not aware of any result in the literature (even with assuming the symmetry
of the random variables) which proved
\eqref{ro1x} holds
with $K\le 3.5$ for all $p\ge 2$  as given in the following proposition.

\begin{prop}\label{appl}
Let $p\ge 2$ and $\{X_i, 1\le i\le n\}$ be a collection of $n$
independent mean zero random variables with $\mathbb{E}|X_i|^p<\infty$, $1\le i\le n$. 
Then
\begin{equation}\label{ro1a}
\left\|\sum_{i=1}^nX_i\right\|_p\le \dfrac{3.5p}{\log p} \max\left\{\left\|\sum_{i=1}^nX_i\right\|_2,\left(\sum_{i=1}^n \|X_i\|_{p}^{p}\right)^{1/p}\right\}.
\end{equation}
\end{prop}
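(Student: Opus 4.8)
The plan is to deduce Proposition~\ref{appl} from Proposition~\ref{ros} by constructing an explicit zero-bias coupling for a sum of independent mean-zero random variables and then controlling $\|T\|_{p-2}$ in terms of $\left(\sum_i \|X_i\|_p^p\right)^{1/p}$. Set $W=\sum_{i=1}^n X_i$ and $\sigma^2=\operatorname{Var}(W)=\sum_i \sigma_i^2$ where $\sigma_i^2=\operatorname{Var}(X_i)$. The classical construction of Goldstein and Reinert (their Theorem for sums of independents, see also \cite{GoldsteinReinert97}) gives: pick a random index $I$ with $\mathbb{P}(I=i)=\sigma_i^2/\sigma^2$, independently replace $X_I$ by an independent copy $X_I^{*}$ having the $X_I$-zero-biased distribution, and leave the other summands alone; then $W^{*}=W-X_I+X_I^{*}$ is $W$-zero biased. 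Consequently $T=W^{*}-W=X_I^{*}-X_I$, and by conditioning on $I$,
\[
\mathbb{E}|T|^{p-2}=\sum_{i=1}^n \frac{\sigma_i^2}{\sigma^2}\,\mathbb{E}\bigl|X_i^{*}-X_i\bigr|^{p-2}
\le \frac{2^{p-3}}{\sigma^2}\sum_{i=1}^n \sigma_i^2\Bigl(\mathbb{E}|X_i^{*}|^{p-2}+\mathbb{E}|X_i|^{p-2}\Bigr),
\]
using convexity of $t\mapsto t^{p-2}$ for $p\ge 2$ (the factor $\max\{1,2^{p-3}\}$, which is $1$ when $2\le p\le 3$). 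So the first step is this coupling and the crude triangle-inequality split of $T$.

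The second step is to relate the zero-biased moments $\mathbb{E}|X_i^{*}|^{p-2}$ back to moments of $X_i$. From the density formula $g_i(x)=\mathbb{E}[X_i\mathbf{1}(X_i>x)]/\sigma_i^2$ one gets the standard identity $\sigma_i^2\,\mathbb{E}h(X_i^{*})=\mathbb{E}[X_i\!\int_0^{X_i} h(t)\,dt]$ for suitable $h$; taking $h(t)=|t|^{p-2}$ and using $\bigl|\int_0^x |t|^{p-2}dt\bigr|=|x|^{p-1}/(p-1)$ yields
\[
\sigma_i^2\,\mathbb{E}|X_i^{*}|^{p-2}=\frac{1}{p-1}\,\mathbb{E}\,X_i\cdot\operatorname{sgn}(X_i)\,|X_i|^{p-1}=\frac{\mathbb{E}|X_i|^{p}}{p-1}.
\]
Similarly $\sigma_i^2=\sigma_i^2\,\mathbb{E}|X_i^{*}|^{0}=\mathbb{E}|X_i|^2/2$ — consistency check — and more relevantly $\sigma_i^2\,\mathbb{E}|X_i|^{p-2}\le \sigma_i^{2}\cdot\|X_i\|_p^{p-2}\le$ (by the interpolation $\sigma_i^2\le \|X_i\|_p^2$ and then) $\|X_i\|_p^{p}$, or one may simply bound $\sigma_i^2\,\mathbb{E}|X_i|^{p-2}\le \mathbb{E}|X_i|^p$ by Hölder-type comparison after noting $\sigma_i^2\le\|X_i\|_{p-2}^2\le\|X_i\|_p^2$. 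Either way, plugging back,
\[
\sigma^2\,\mathbb{E}|T|^{p-2}\le 2^{p-3}\sum_{i=1}^n\Bigl(\frac{\mathbb{E}|X_i|^p}{p-1}+\mathbb{E}|X_i|^p\Bigr)\le 2^{p-3}\cdot\frac{p}{p-1}\sum_{i=1}^n \|X_i\|_p^p.
\]
Feeding this and $\sigma^p$ into \eqref{zer1} of Proposition~\ref{ros} gives
\[
\mathbb{E}|W|^p\le \kappa_p\Bigl(\sigma^p+2^{p-3}\tfrac{p}{p-1}\textstyle\sum_i\|X_i\|_p^p\Bigr)\le \kappa_p\,2^{p-3}\tfrac{p}{p-1}\cdot 2\cdot M^p,
\]
where $M=\max\bigl\{\|W\|_2,(\sum_i\|X_i\|_p^p)^{1/p}\bigr\}$ and $\sigma^p=\|W\|_2^p\le M^p$.

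The final step is the numerical bookkeeping: taking $p$-th roots, $\|W\|_p\le \bigl(2^{p-2}\kappa_p \,p/(p-1)\bigr)^{1/p}M$, and one must check $\bigl(2^{p-2}\kappa_p\, p/(p-1)\bigr)^{1/p}\le 3.5p/\log p$ for all $p\ge 2$. Since $\kappa_p=\frac{(\log 8)^3}{196}\bigl(\frac{7p}{4\log p}\bigr)^p$, we have $\kappa_p^{1/p}=\bigl(\frac{(\log 8)^3}{196}\bigr)^{1/p}\cdot\frac{7p}{4\log p}\le \frac{7p}{4\log p}$ (the prefactor is $<1$), so the left side is at most $2\cdot\bigl(\tfrac{p}{p-1}\bigr)^{1/p}\cdot\frac{7p}{4\log p}\cdot 2^{-2/p}$; for $p\ge 2$, $2^{1-2/p}\le 2$ and $(p/(p-1))^{1/p}\le\sqrt2$, giving a crude bound near $2\sqrt2\cdot\frac74\,p/\log p$, which slightly exceeds $3.5$, so the coefficient must be squeezed more carefully (e.g. the $2^{p-3}$ improves to $1$ for $p\le 3$ where $\log p$ is small, and for large $p$ the subexponential prefactors decay), splitting into a small-$p$ numerical range and a large-$p$ asymptotic range exactly as in the proof of Proposition~\ref{ros}. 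I expect this last constant-chasing — verifying $3.5$ rather than some larger absolute constant works uniformly in $p\ge2$ — to be the main obstacle; the structural part (the coupling plus the moment identity for $X_i^*$) is routine, and the key leverage is that Proposition~\ref{ros} already supplies the optimal $p/\log p$ growth so nothing is lost there.
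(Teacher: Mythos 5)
Your structural argument is exactly the paper's: the Goldstein--Reinert index-replacement coupling $W^{*}=W-X_I+X_I^{*}$, conditioning on $I$ to write $\sigma^2\mathbb{E}|T|^{p-2}=\sum_i\sigma_i^2\mathbb{E}|X_i-X_i^{*}|^{p-2}$, the zero-bias moment identity $(p-1)\sigma_i^2\mathbb{E}|X_i^{*}|^{p-2}=\mathbb{E}|X_i|^p$, and the H\"older-type bound $\sigma_i^2\mathbb{E}|X_i|^{p-2}\le\mathbb{E}|X_i|^p$, all fed into Proposition~\ref{ros}. So there is nothing structurally different to flag.

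The gap is in the final numerical step, which you explicitly leave open ("the coefficient must be squeezed more carefully... I expect this to be the main obstacle"). Your crude estimate stacks $2^{1-2/p}\le 2$ and $\bigl(p/(p-1)\bigr)^{1/p}\le\sqrt2$ to get a coefficient near $2\sqrt2\cdot\tfrac74>3.5$, but these two upper bounds are tight at opposite ends of the range ($p\to\infty$ and $p=2$ respectively), and you discard the compensating prefactor $\bigl((\log 8)^3/196\bigr)^{1/p}<1$ in $\kappa_p^{1/p}$ that is precisely what makes the constant close. The paper avoids all of this by being deliberately generous earlier: bound $\max\{1,2^{p-3}\}$ by $2^{p-2}$ and then absorb both $\sigma^p$ and the sum to get
\[
\mathbb{E}|W|^p\le\kappa_p\Bigl(\sigma^p+2^{p-1}\sum_i\mathbb{E}|X_i|^p\Bigr)\le 2^p\kappa_p\max\Bigl\{\sigma^p,\sum_i\mathbb{E}|X_i|^p\Bigr\},
\]
whence
\[
\|W\|_p\le\bigl(2^p\kappa_p\bigr)^{1/p}M=2\,\kappa_p^{1/p}M=2\cdot\Bigl(\tfrac{(\log 8)^3}{196}\Bigr)^{1/p}\cdot\dfrac{7p}{4\log p}\,M\le\dfrac{3.5\,p}{\log p}\,M
\]
uniformly in $p\ge 2$, since the prefactor in parentheses is less than $1$. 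In short: you kept a sharper polynomial factor than needed, then wasted it by crude bounding; replacing it with the slightly wasteful but cleanly removable $2^p$ closes the proof immediately. (The trade-off you did not exploit is exactly that the $p$-th root of $2^p$ is a clean $2$, so all slack lands on the subunit prefactor of $\kappa_p$.)
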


\begin{proof}
Let $W=\sum_{i=1}^nX_i $ and $\sigma^2 = \mathrm{Var}(W)$. Denote $\mathrm{Var}(X_i)$ by $\sigma_i^2,~ 1\le i\le n$. Let $X_{i}^{*}$ have
the $X_i$-zero biased distribution with $\{X_{i}^{*}, 1\le i\le n\}$
mutually independent and $X_i^*$ independent of $\{X_j,j\not=i\}$. Let $I$ be a random index, independent of
$\{X_i,X_{i}^{*},1\le i\le n\}$, with the distribution
$${\mathbb{P}}(I=i)=\dfrac{\sigma_{i}^2}{\sigma^2}.$$
The argument proving part (v) of Lemma 2.1 in 
\cite{GoldsteinReinert97} shows that removing $X_I$ and replacing it
by $X_{I}^{*}$ gives a random variable $W^{*}$ with the $W$-zero
biased distribution, that is,
$$W^{*}=W-X_I+X_{I}^{*}$$
has the $W$-zero biased distribution. 

Let $\kappa_p$ be as in \propref{ros}. By \propref{ros}, we have
\begin{equation}\label{ro11}\begin{split} {\mathbb{E}}|W|^p&\le \kappa_p(\sigma^p+\sigma^2{\mathbb{E}}|W^{*}-W|^{p-2})\\
&=\kappa_p(\sigma^p+\sigma^2{\mathbb{E}}|X_I-X_{I}^{*}|^{p-2})\\
& =\kappa_p\left(\sigma^p+\sigma^2\sum_{i=1}^n{\mathbb{E}}|X_i-X_{i}^{*}|^{p-2}\sigma_{i}^2/\sigma^2\right)\\
& \le
\kappa_p\left(\sigma^p+\max\{1,2^{p-3}\}\sum_{i=1}^n\sigma_{i}^2({\mathbb{E}}|X_i|^{p-2}+{\mathbb{E}}|X_{i}^{*}|^{p-2})\right)\\
& \le
\kappa_p\left(\sigma^p+2^{p-2}\sum_{i=1}^n\sigma_{i}^2({\mathbb{E}}|X_i|^{p-2}+{\mathbb{E}}|X_{i}^{*}|^{p-2})\right).
\end{split} 
\end{equation} 
By H\"{o}lder's inequality, we have for all $1\le i\le n$,
\begin{equation}\label{ro12}\sigma_{i}^2{\mathbb{E}}|X_i|^{p-2}\le
({\mathbb{E}}|X_i|^p)^{2/p}({\mathbb{E}}|X_i|^p)^{(p-2)/p}={\mathbb{E}}|X_i|^p.\end{equation} With the function
$f$ as defined in (\ref{f-R}), it follows from \eqref{zer} that
\begin{equation}\label{ro13}(p-1)\sigma_{i}^2{\mathbb{E}}|X_{i}^{*}|^{p-2}={\mathbb{E}}|X_i|^p.\end{equation}
Combining \eqref{ro11}-\eqref{ro13}, we have $${\mathbb{E}}|W|^p\le
\kappa_p\left(\sigma^p+2^{p-1}\sum_{i=1}^n{\mathbb{E}}|X_i|^{p}\right)\le 2^p\kappa_{p} \max\left\{\sigma^p,\sum_{i=1}^n {\mathbb{E}}|X_i|^p\right\},$$
which proves \eqref{ro1a}.
\end{proof}

\section{Uniform and non-uniform Kolmogorov bounds for zero-bias couplings}\label{sec:main}

Optimal bounds on the Kolmogorov distance for zero-bias couplings have already been obtained by
\cite{Goldstein05} provided the difference between the original random variable and its
zero bias transform is properly bounded. In this section, we improved the mentioned result by 
\cite{Goldstein05} in two directions: firstly, a truncation argument is used to go beyond boundedness, and secondly, 
non-uniform bounds with polynomial decay are provided. The following
theorem gives the Kolmogorov bound in normal approximation for $W^{*}$.

 \begin{thm}\label{star} Let $W$ be such that ${\mathbb{E}}W = 0$ and $\mathrm{Var}(W) = 1$, 
 and let $W^{*}$ be
$W$-zero biased and be defined on the same probability space as
$W$. Let $T=W^{*}-W$.

(i) We have
 \begin{equation}\label{bo01}\sup_{x\in\mathbb{R}}|{\mathbb{P}}(W^{*}\le x)-\Phi(x)|~\le \left(1+\dfrac{\sqrt{2\pi}}{4}\right)\sqrt{{\mathbb{E}}T^2}.\end{equation}
 
 (ii) Let $p\ge 2$. Then for all $x \in \mathbb{R}$,
\begin{equation}\label{nou01} 
\begin{split}
\left|{\mathbb{P}}(W^* \le x) - \Phi(x)\right|
& \le \dfrac{C_p}{1+|x|^p}\left(\dfrac{p}{\log p}\right)^p\left(\sqrt{{\mathbb{E}}T^2}+\sqrt{{\mathbb{E}}|T| ^{2p+2}}\right).
\end{split}
\end{equation}
 \end{thm}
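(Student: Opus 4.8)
The plan is to prove both parts of \thmref{star} by the standard Stein-equation route, exploiting the defining identity \eqref{zer} for the zero-bias transform. Recall that for a fixed $x$ the bounded Stein solution $f_x$ of $f'(w)-wf(w)=\mathbf{1}(w\le x)-\Phi(x)$ satisfies $\|f_x\|_\infty\le\sqrt{2\pi}/4$, $\|f_x'\|_\infty\le 1$, and the increment bound $|f_x'(w)-f_x'(v)|\le 1$ together with the sharper local estimate $|(wf_x(w))'|\le 1$. Writing $h(w)=\mathbf{1}(w\le x)$ and using ${\mathbb E}W^*$-zero-bias in the form ${\mathbb E}f_x'(W^*)={\mathbb E}Wf_x(W)$, we get
\begin{equation*}
{\mathbb P}(W^*\le x)-\Phi(x)={\mathbb E}\bigl[f_x'(W^*)\bigr]-{\mathbb E}\bigl[W^*f_x(W^*)\bigr]
={\mathbb E}\bigl[Wf_x(W)\bigr]-{\mathbb E}\bigl[W^*f_x(W^*)\bigr].
\end{equation*}
Then $W^*=W+T$, so the right side is $-{\mathbb E}\bigl[(W+T)f_x(W+T)-Wf_x(W)\bigr]=-{\mathbb E}\int_0^T (W+t)f_x'(W+t)+f_x(W+t)\,dt$ after an elementary rearrangement, i.e. $-{\mathbb E}\int_0^T\bigl((uf_x(u))'\big|_{u=W+t}\bigr)\,dt$.

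For part (i) one simply bounds $|(uf_x(u))'|\le 1$ inside the integral to get $|{\mathbb P}(W^*\le x)-\Phi(x)|\le{\mathbb E}|T|\le\sqrt{{\mathbb E}T^2}$, and then one must absorb the extra smoothing term. Actually the clean way, and the one that yields the stated constant $1+\sqrt{2\pi}/4$, is to split $(uf_x(u))'=f_x(u)+uf_x'(u)$ and handle the two pieces separately: $|{\mathbb E}\int_0^T f_x(W+t)\,dt|\le\|f_x\|_\infty{\mathbb E}|T|\le(\sqrt{2\pi}/4)\sqrt{{\mathbb E}T^2}$, while for the $uf_x'(u)$ piece one uses $|f_x'|\le 1$ and writes $\int_0^T (W+t)\,dt$, noting ${\mathbb E}[\int_0^T W\,dt]={\mathbb E}[WT]$ and ${\mathbb E}[\int_0^T t\,dt]=\tfrac12{\mathbb E}T^2$; combining with $|{\mathbb E}WT|\le\|W\|_2\|T\|_2=\sqrt{{\mathbb E}T^2}$ (since $\|W\|_2=1$) and Cauchy--Schwarz gives the remaining $\sqrt{{\mathbb E}T^2}$ contribution. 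Tidying the coefficients produces \eqref{bo01}.

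For part (ii), the non-uniform bound, the extra ingredient is that for $|x|$ large the Stein solution $f_x$ decays. I would use the well-known non-uniform estimates $|f_x(w)|\le C/(1+|x|)$ and $|wf_x(w)|\le C/(1+|x|)$ valid for $w$ in a neighbourhood of $0$, together with a truncation of $T$ at level of order $|x|$: on the event $\{|T|\le|x|/2\}$ the argument $W+t$ stays close enough to where $f_x$ is small (when $W$ is itself not far from $0$, a region we control by the already-established concentration/variance $\|W\|_2=1$), while on $\{|T|>|x|/2\}$ we pay $\le\|f_x\|_\infty+\|f'_x\|_\infty\lesssim 1$ times ${\mathbb E}[|T|\,\mathbf{1}(|T|>|x|/2)]\le (2/|x|)^{p-1}{\mathbb E}|T|^p$-type bounds, dressed up via Hölder to produce the $\sqrt{{\mathbb E}|T|^{2p+2}}$ factor and the $1+|x|^p$ denominator. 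The $(p/\log p)^p$ factor is exactly what \propref{ros} contributes: to convert the ``$W$ near $0$'' heuristic into a rigorous tail bound on ${\mathbb P}(|W|>$ something$)$ we need moment bounds on $W$, and \propref{ros} gives ${\mathbb E}|W|^p\le\kappa_p(1+{\mathbb E}|T|^{p-2})$ with $\kappa_p$ of order $(p/\log p)^p$ up to the universal constant $(\log 8)^3/196$. Feeding this into a Markov/Rosenthal estimate for the contribution of $\{W\text{ large}\}$ and matching powers yields \eqref{nou01}. I expect the main obstacle to be the bookkeeping in part (ii): choosing the truncation level for $T$ as a function of $x$, correctly pairing it with the non-uniform decay of $f_x$ and with the moment input from \propref{ros}, and verifying that all the resulting exponents collapse to the single clean factor $\sqrt{{\mathbb E}T^2}+\sqrt{{\mathbb E}|T|^{2p+2}}$ with only a $C_p(p/\log p)^p$ constant — in particular making sure no stray $\|W\|_p$ is left undominated and that the $1+|x|^p$ in the denominator (rather than $1+|x|^{p}$ with a worse power) really emerges.
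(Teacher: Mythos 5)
Your overall strategy matches the paper's: rewrite ${\mathbb P}(W^*\le x)-\Phi(x)$ via the Stein equation and the zero-bias identity, then for the non-uniform bound localize in $x$ and feed in the moment bounds from \propref{ros}. But there are two concrete technical problems that would have to be fixed before the argument closes.

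In part (i), the assertion $|(wf_x(w))'|\le 1$ is not correct. The paper's \eqref{nu01} gives only $|g_x(w)|\le 1+|w|$, and for $0<w<x$ the function $g_x$ increases up to $g_x(x^-)$ which grows like $x$, so it is unbounded in $x$. Your fallback route — writing the difference as $-\,{\mathbb E}\int_0^T g_x(W+t)\,dt$, splitting $g_x=f_x+(\cdot)f_x'$, and integrating — does not produce the stated constant either: bounding $\bigl|\int_0^T (W+t)f_x'(W+t)\,dt\bigr|\le\int_0^{|T|}(|W|+t)\,dt=|W||T|+\tfrac12 T^2$ leaves an extra $\tfrac12{\mathbb E}T^2$ term that has no counterpart in \eqref{bo01}, and absorbing it into $\sqrt{{\mathbb E}T^2}$ by considering when the bound is trivial strictly worsens the constant beyond $1+\sqrt{2\pi}/4$. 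The paper avoids both problems by decomposing algebraically \emph{before} introducing an integral:
\begin{equation*}
{\mathbb E}Wf_x(W)-{\mathbb E}(W+T)f_x(W+T)=-\,{\mathbb E}\,W\bigl[f_x(W+T)-f_x(W)\bigr]-{\mathbb E}\,T f_x(W+T),
\end{equation*}
so that $W$ sits outside a difference of $f_x$-values (controlled by $\|f_x'\|_\infty\le1$ and the full step $|T|$), rather than inside an integral of $g_x$; this yields exactly ${\mathbb E}|WT|+\tfrac{\sqrt{2\pi}}{4}{\mathbb E}|T|$ and hence \eqref{bo01}.

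In part (ii), the truncation in your sketch is misdirected and a preliminary reduction is missing. You propose truncating $T$ at level $|x|/2$, and you note parenthetically that one \emph{also} needs $W$ near $0$ — but that second event is precisely what has to be handled explicitly, and a truncation on $T$ alone does not do it. The paper instead truncates on the argument $W+t$ itself, splitting the integral $-\int_0^T g_x(W+t)\,dt$ into the region $\{W+t\le x/2\}$ (where $g_x$ is small, by monotonicity and the explicit value $g_x(x/2)$) and $\{W+t>x/2\}$ (where it uses the indicator bound $\mathbf{1}(W+t>x/2)\le \tfrac{C_p}{1+x^p}(|W|^p+|T|^p)$ and then \propref{ros} for the $|W|$-moments). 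Moreover, before this decomposition the paper first disposes of the case ${\mathbb E}|T|^{2p+2}\ge1$ by a direct Markov bound (combined with \propref{ros} applied to $W^*$), so that in the remaining case one may assume ${\mathbb E}|T|^{2p+2}<1$; this assumption is what makes the moment bounds \eqref{a33a}--\eqref{a33b} give $C_p(p/\log p)^{2p}$ without any surviving $T$-dependence. Your sketch omits this case split, and without it the various Hölder pairings you describe do not visibly collapse to the single factor $\sqrt{{\mathbb E}T^2}+\sqrt{{\mathbb E}|T|^{2p+2}}$.
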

\begin{proof}
For $x\in {\mathbb{R}}$,
let $f_{x}$ be
the unique bounded solution of the Stein equation
\begin{equation}\label{bo02b}f^{'}(w)-w f(w)=1(w\le x)-\Phi(x),\end{equation}
and let
\begin{equation}\label{bo02c}
g_x(w)=(w f_x(w))^{'}.
\end{equation}
We have $0<f_x(w)\le \sqrt{2\pi}/4$ and $|f_{x}^{'}(w)|\le 1$ for all $w\in \mathbb{R}$ (see \cite{Stein86}). Therefore
\begin{equation}\label{nu01}
|g_x(w)|=|f_x(w)+wf_{x}^{'}(w)|\le 1+|w| \text{ for all }w\in \mathbb{R},
\end{equation}
\begin{equation}\label{bo03}
{\mathbb{E}}|Tf_x(W+T)|\le \dfrac{\sqrt{2\pi}}{4}{\mathbb{E}}|T| \le \dfrac{\sqrt{2\pi}}{4}\sqrt{{\mathbb{E}}T^2},
\end{equation}
and
\begin{equation}\label{bo04}
{\mathbb{E}}|(W(f_x(W+T)-f_x(W))|\le {\mathbb{E}}|WT|\le \sqrt{{\mathbb{E}}W^2{\mathbb{E}}T^2}=\sqrt{{\mathbb{E}}T^2}.
\end{equation}
Since
\begin{equation}\label{bo05}
\begin{split}
|{\mathbb{P}}(W^{*}\le x)-\Phi(x)|
& =|{\mathbb{E}}f_{x}^{'}(W^{*})-{\mathbb{E}}W^{*}f_x(W^{*})|\\
& =|{\mathbb{E}}Wf_x(W)-{\mathbb{E}}(W+T)f_x(W+T)|\\
&\le {\mathbb{E}}|(W(f_x(W+T)-f_x(W))|+{\mathbb{E}}|Tf_x(W+T)|,
\end{split}
\end{equation}
the conclusion \eqref{bo01} follows by combining \eqref{bo03}, \eqref{bo04}, and \eqref{bo05}. \thmref{star}(i) is proved.

To prove \thmref{star}(ii), it suffices to consider the case where $x\ge0$ since we can simply apply the result to $-W^{*}$ when $x<0$ (see (2.59) in 
\cite{CGS}).
In view of the uniform bound \eqref{bo01}, it suffices to consider the case where
$x\ge2$.
By applying Markov's inequality and \propref{ros}, we have
\begin{equation}\label{a32_ed01}
\begin{split}
|P(W^*\le x)-\Phi(x)| &\le \max \left\{P(W^*>x),1-\Phi(x)\right\}\\
& \le \max \left\{\dfrac{{\mathbb{E}}|W^*|^{p+1}}{x^{p+1}},1-\Phi(x)\right\}\\
& = \max \left\{\dfrac{{\mathbb{E}}|W|^{p+3}}{(p+2)x^{p+1}},1-\Phi(x)\right\}\\
& \le \max \left\{\dfrac{\kappa_{p+3}\left(1+{\mathbb{E}}|T|^{p+1}\right)}{(p+2)x^{p+1}},1-\Phi(x)\right\}.
\end{split}
\end{equation}
By using the fact that $\sqrt{2\pi}(1-\Phi(x))\le e^{-x^2/2}/x$ for all $x>0$, we have
\begin{equation}\label{a32_ed02}
\begin{split}
\max_{x>0}x^{p+1}(1-\Phi(x))\le \dfrac{1}{\sqrt{2\pi}}\max_{x>0}x^{p}e^{-x^2/2}=\dfrac{1}{\sqrt{2\pi}}\left(\dfrac{\sqrt{p}}{\sqrt{e}}\right)^p.
\end{split}
\end{equation}
Combining \eqref{a32_ed01} and \eqref{a32_ed02}, we obtain
\begin{equation}\label{a32}
\begin{split}
|P(W^*\le x)-\Phi(x)| &\le \dfrac{C_p}{1+x^p}\left(\dfrac{p}{\log p}\right)^p \left(1+\sqrt{{\mathbb{E}}|T|^{2p+2}}\right).
\end{split}
\end{equation}
If ${\mathbb{E}}|T|^{2p+2}\ge 1$, then $1+\sqrt{{\mathbb{E}}|T|^{2p+2}}\le 2\sqrt{{\mathbb{E}}|T|^{2p+2}}$.
Therefore \eqref{nou01} holds by \eqref{a32}. It 
remains to consider the case where ${\mathbb{E}}|T|^{2p+2}< 1$. In this case,
by applying \propref{ros} and Jensen's inequality, we have
\begin{equation}\label{a33a}
\begin{split}
{\mathbb{E}}|W|^{2p} & \le \kappa_{2p}\left(1+{\mathbb{E}}|T|^{2p-2}\right)\\
&\le 2\kappa_{2p}\le C_p\left(\dfrac{p}{\log p}\right)^{2p},
\end{split}
\end{equation}
and
\begin{equation}\label{a33b}
\begin{split}
{\mathbb{E}}|W|^{2p+2} & \le \kappa_{2p+2}\left(1+{\mathbb{E}}|T|^{2p}\right)\\
&\le 2\kappa_{2p+2}\le C_p\left(\dfrac{p}{\log p}\right)^{2p}.
\end{split}
\end{equation}
Since
\begin{equation*}
\begin{split}
    {\mathbb{P}}(W^* \le x) - \Phi(x) & = {\mathbb{E}}\{Wf_x(W) - W^*f_x(W^*)\} \\
    & = - {\mathbb{E}}\int_0^T g_x(W+t)\mathrm{d}t,
\end{split}
\end{equation*}
we have
\begin{equation}\label{star01}
\begin{split}
|{\mathbb{P}}(W^* \le x) - \Phi(x)| \le R_1+R_2,
\end{split}
\end{equation}
where
\begin{equation}\label{a34}
R_1= \left|{\mathbb{E}}\int_0^T g_x(W+t)\left({\textbf{1}}(W+t \le 0)+{\textbf{1}}\left(0<W+t \le  \dfrac{x}{2}\right)\right) \mathrm{d}t\right|
\end{equation}
and
\begin{equation}\label{a36}
R_2= \left|{\mathbb{E}}\int_0^T g_x(W+t){\textbf{1}}\left(W+t > \dfrac{x}{2}\right) \mathrm{d}t\right|.
\end{equation}
From the definition of $f_x$ and $g_x$, we have (see  \cite{ChenShao01})
\begin{equation}\label{CS3} g_x(w)=
\begin{cases}
\left(\sqrt{2\pi}(1+w^2)e^{w^2/2}(1-\Phi(w))-w\right)\Phi(x) &\text{ if  }w\ge x,\\
\left(\sqrt{2\pi}(1+w^2)e^{w^2/2}\Phi(w)+w\right)(1-\Phi(x)) &\text{ if  }w< x.
\end{cases}
\end{equation} 
\cite{ChenShao01} proved that $g_x\ge 0$, $g_x(w)\le
2(1-\Phi(x))$ for $w\le 0$, and $g_x$ is increasing for
$0\le w<x$.
From \eqref{CS3} and the fact that $\sqrt{2\pi}(1-\Phi(x))\le e^{-x^2/2}/x$ for all $x>0$, we have
\begin{equation}\label{star01b}
\begin{split}
g_x(x/2)&=\left(\sqrt{2\pi}\left(1+\dfrac{x^2}{4}\right)e^{x^2/8}\Phi(x/2)+\dfrac{x}{2}\right)(1-\Phi(x))\\
&\le \left(\dfrac{1}{x}+\dfrac{x}{4}\right)e^{-3x^2/8}+\dfrac{1}{2\sqrt{2\pi}}e^{-x^2/2}.
\end{split}
\end{equation}
For all $r\ge 1$, a straightforward calculation shows that
\[\max_{x>0}x^{r}e^{-x^2/2}<\max_{x>0}x^{r}e^{-3x^2/8}=\left(\dfrac{2\sqrt{r}}{\sqrt{3e}}\right)^r.\]
Therefore, from \eqref{a34} and \eqref{star01b}, we have
\begin{equation}\label{star01c}
\begin{split}
R_1 &\le {\mathbb{E}} \int_{0}^{|T|} \left(2(1-\Phi(x))+g_x(x/2)\right)\mathrm{d}t\\
&\le \dfrac{C_p}{1+x^p}\left(\dfrac{p}{\log p}\right)^p{\mathbb{E}}|T|\le \dfrac{C_p}{1+x^p}\left(\dfrac{p}{\log p}\right)^p \sqrt{{\mathbb{E}}T^2}.
\end{split}
\end{equation}
To bound $R_2$, we estimate
\begin{equation}\label{a37}
{\textbf{1}}\left(W+t > \dfrac{x}{2}\right)\le \dfrac{C_p}{1+x^p}\left(|W|^p+|T|^p\right) \text{ for all } 0\le t \le |T|.
\end{equation}
Combining \eqref{nu01} and \eqref{a37}, we have
\begin{equation}\label{a38}
\begin{split}
R_2&\le \dfrac{C_p}{1+x^p} {\mathbb{E}} \int_{0}^{|T|} (1+|W|+|T|)(|W|^p+|T|^p)\mathrm{d}t\\
& = \dfrac{C_p}{1+x^p}{\mathbb{E}}(1+|W|+|T|)(|W|^p|T|+|T|^{p+1}).
\end{split}
\end{equation}
We bound each term in \eqref{a38} as follows. Firstly, we have
\begin{equation}\label{a42}
{\mathbb{E}}|T|^{p+1}\le \sqrt{{\mathbb{E}}T^{2p+2}}\ \text{ and }\ {\mathbb{E}}|W||T|^{p+1}\le \sqrt{{\mathbb{E}}W^2 {\mathbb{E}}T^{2p+2}}= \sqrt{{\mathbb{E}}T^{2p+2}}.
\end{equation}
Secondly, by using the Cauchy–Schwarz inequality, \eqref{a33a} and \eqref{a33b}, and by noting that
${\mathbb{E}}|T|^{2p+2}< 1$, we have
\begin{equation}\label{a44}
\begin{split}
{\mathbb{E}}|W|^{p}|T|&\le \sqrt{{\mathbb{E}}|W|^{2p}{\mathbb{E}}T^2 }\le C_p\left(\dfrac{p}{\log p}\right)^p\sqrt{\mathbb{E}T^2},
\end{split}
\end{equation}
\begin{equation}\label{a45}
\begin{split}
{\mathbb{E}}|W|^{p+1}|T|&\le \sqrt{{\mathbb{E}}|W|^{2p+2}{\mathbb{E}}T^2 }\le  C_p\left(\dfrac{p}{\log p}\right)^p\sqrt{\mathbb{E}T^2},
\end{split}
\end{equation}
and
\begin{equation}\label{a43}
{\mathbb{E}}|T|^{p+2}\le \sqrt{{\mathbb{E}}T^2 {\mathbb{E}}T^{2p+2}}\le \sqrt{{\mathbb{E}}T^2}.
\end{equation}
Finally,
\begin{equation}\label{a46}
\begin{split}
{\mathbb{E}}|W|^{p}|T|^2&={\mathbb{E}}\left((|W|^{p+1}|T|)^{p/(p+1)}(|T|^{p+2})^{1/(p+1)}\right)\\
&\le {\mathbb{E}}\left(|W|^{p+1}|T|+|T|^{p+2}\right)\\
&\le  C_p\left(\dfrac{p}{\log p}\right)^p\sqrt{\mathbb{E}T^2},
\end{split}
\end{equation}
where we have used \eqref{a10} in the first inequality, and \eqref{a45} and \eqref{a43} in the second inequality.
From \eqref{a38}-\eqref{a46}, we have
\begin{equation}\label{a48}
\begin{split}
R_2&\le \dfrac{C_p}{1+x^p} \left(\dfrac{p}{\log p}\right)^p \left(\sqrt{\mathbb{E}T^2}+\sqrt{\mathbb{E}|T|^{2p+2}} \right).
\end{split}
\end{equation}
Combining \eqref{star01}, \eqref{star01c} and \eqref{a48}, we obtain \eqref{nou01}.
\end{proof}

\thmref{star} is a normal approximation for $W^{*}$. When $T=W^{*}-W$ has fast decaying tails, by using \thmref{star}, we can obtain useful bounds in normal approximation for $W$. This gives us the following theorem.

\begin{thm}\label{nonstar} Let $W$ be such that ${\mathbb{E}}W = 0$ and $\mathrm{Var}(W) = 1$, 
 and let $W^{*}$ be
$W$-zero biased and defined on the same probability space as
$W$. Let $T=W^{*}-W$ and $\varepsilon>0$ be arbitrary.

(i) We have
 \begin{equation}\label{bo01e}
 \sup_{x\in {\mathbb{R}}}|{\mathbb{P}}(W\le x)-\Phi(x)|\le \left(1+\dfrac{\sqrt{2\pi}}{4}\right)\sqrt{{\mathbb{E}}T^2}+\dfrac{\varepsilon}{\sqrt{2\pi}}+{\mathbb{P}}(|T|>\varepsilon).
\end{equation}

(ii) Let $p\ge 2$. Then for all $x \in \mathbb{R}$, 
\begin{equation}\label{nou01e} 
\begin{split}
&|{\mathbb{P}}(W \le x) - \Phi(x)|\\ 
&\le \dfrac{C_p}{1+|x|^p}\left(\dfrac{p}{\log p}\right)^p \left(\sqrt{{\mathbb{E}}T^2}+\sqrt{{\mathbb{E}}|T| ^{2p+2}}+\varepsilon+\sqrt{{\mathbb{P}}(|T|>\varepsilon)}\right).
\end{split}
 \end{equation}
 \end{thm}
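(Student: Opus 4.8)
\emph{Approach.} The plan is to deduce \thmref{nonstar} from \thmref{star} by a smoothing (sandwiching) argument that exploits that $T=W^{*}-W$ is small with high probability. For part~(i) I would use the set inclusions
\[
\{W\le x\}\subseteq\{W^{*}\le x+\varepsilon\}\cup\{|T|>\varepsilon\},\qquad \{W^{*}\le x-\varepsilon\}\setminus\{|T|>\varepsilon\}\subseteq\{W\le x\},
\]
which give $|{\mathbb{P}}(W\le x)-\Phi(x)|\le\sup_{y}|{\mathbb{P}}(W^{*}\le y)-\Phi(y)|+\sup_{|u|\le\varepsilon}|\Phi(x+u)-\Phi(x)|+{\mathbb{P}}(|T|>\varepsilon)$; bounding the first term by \thmref{star}(i), the second by $\varepsilon\sup_{t}\Phi'(t)=\varepsilon/\sqrt{2\pi}$, and taking the supremum over $x$, yields \eqref{bo01e}. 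This part is routine.

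\emph{Reductions for (ii).} Applying the bound to $-W$, I may assume $x\ge0$. For $0\le x\le2$ the prefactor $(1+|x|^{p})^{-1}(p/\log p)^{p}$ is bounded below by an absolute positive constant and ${\mathbb{P}}(|T|>\varepsilon)\le\sqrt{{\mathbb{P}}(|T|>\varepsilon)}$, so part~(i) already produces a bound of the form \eqref{nou01e}; hence I may assume $x\ge2$. Next, if $\varepsilon\ge x/2$ (so $\varepsilon\ge1$), I would run the argument below with $x/2$ in place of $\varepsilon$ and use $x/2\le\varepsilon$ together with $\sqrt{{\mathbb{P}}(|T|>x/2)}\le1\le\varepsilon$, recovering \eqref{nou01e} up to a factor $2$. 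It therefore remains to treat $x\ge2$ and $0<\varepsilon\le x/2$ (so $1\le x/2\le x-\varepsilon$ and $x+\varepsilon\le2x$).

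\emph{Main step.} With $x\ge2$ and $0<\varepsilon\le x/2$ fixed, I would split on the size of ${\mathbb{E}}|T|^{2p+2}$, as in the proof of \thmref{star}(ii). If ${\mathbb{E}}|T|^{2p+2}\ge1$, the term $\sqrt{{\mathbb{E}}|T|^{2p+2}}$ dominates: bound $|{\mathbb{P}}(W\le x)-\Phi(x)|\le\max\{{\mathbb{P}}(W>x),\,1-\Phi(x)\}$, use $1-\Phi(x)\le C_{p}(p/\log p)^{p}(1+x^{p})^{-1}$ (as in \eqref{a32_ed02}), and bound ${\mathbb{P}}(W>x)\le{\mathbb{E}}|W|^{p}/x^{p}$ where \propref{ros} and Jensen's inequality give ${\mathbb{E}}|W|^{p}\le\kappa_{p}(1+{\mathbb{E}}|T|^{p-2})\le2\kappa_{p}\sqrt{{\mathbb{E}}|T|^{2p+2}}$, after which $\kappa_{p}\le C_{p}(p/\log p)^{p}$ and $x^{-p}\le2(1+x^{p})^{-1}$ finish it. If ${\mathbb{E}}|T|^{2p+2}<1$, then, by the same reasoning as behind \eqref{a33a}--\eqref{a33b}, the quantities ${\mathbb{E}}|W|^{2p}$, ${\mathbb{E}}|W|^{2p+2}$, and hence ${\mathbb{E}}|W^{*}|^{2p}={\mathbb{E}}|W|^{2p+2}/(2p+1)$ (take $f(w)=\mathrm{sgn}(w)|w|^{2p+1}$ in \eqref{zer}), are all $\le C_{p}(p/\log p)^{2p}$. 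I would then sandwich with offset $\varepsilon$: for the upper bound,
\[
{\mathbb{P}}(W\le x)-\Phi(x)\le\left({\mathbb{P}}(W^{*}\le x+\varepsilon)-\Phi(x+\varepsilon)\right)+\left(\Phi(x+\varepsilon)-\Phi(x)\right)+{\mathbb{P}}(W\le x,\,W^{*}>x+\varepsilon),
\]
where the first summand is controlled by \thmref{star}(ii) together with $(1+(x+\varepsilon)^{p})^{-1}\le(1+x^{p})^{-1}$; the second is $\le\varepsilon\Phi'(x)=\frac{\varepsilon}{\sqrt{2\pi}}e^{-x^{2}/2}\le C_{p}\varepsilon(p/\log p)^{p}(1+x^{p})^{-1}$ since $x^{p}e^{-x^{2}/2}\le(p/e)^{p/2}\le(p/\log p)^{p}$; and on the last event $W^{*}>x$ and $W^{*}-W>\varepsilon$, so that probability is $\le\sqrt{{\mathbb{P}}(W^{*}>x)\,{\mathbb{P}}(|T|>\varepsilon)}\le C_{p}(p/\log p)^{p}(1+x^{p})^{-1}\sqrt{{\mathbb{P}}(|T|>\varepsilon)}$ by the Cauchy--Schwarz inequality, Markov's inequality, and the moment bound for $W^{*}$. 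For the lower bound I would write ${\mathbb{P}}(W>x)={\mathbb{P}}(W>x,|T|\le\varepsilon)+{\mathbb{P}}(W>x,|T|>\varepsilon)$, treat the second piece as above (with ${\mathbb{E}}|W|^{2p}$ in place of ${\mathbb{E}}|W^{*}|^{2p}$), and bound the first by ${\mathbb{P}}(W^{*}>x-\varepsilon)\le(1-\Phi(x))+(\Phi(x)-\Phi(x-\varepsilon))+|{\mathbb{P}}(W^{*}\le x-\varepsilon)-\Phi(x-\varepsilon)|$, in which $\Phi(x)-\Phi(x-\varepsilon)\le\frac{\varepsilon}{\sqrt{2\pi}}e^{-x^{2}/8}$ (using $x-\varepsilon\ge x/2$) and the last term is $\le C_{p}(p/\log p)^{p}(1+x^{p})^{-1}(\sqrt{{\mathbb{E}}T^{2}}+\sqrt{{\mathbb{E}}|T|^{2p+2}})$ by \thmref{star}(ii) (again $x-\varepsilon\ge x/2$, so $(1+(x-\varepsilon)^{p})^{-1}\le2^{p}(1+x^{p})^{-1}$). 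Assembling the upper and lower estimates gives \eqref{nou01e}.

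\emph{Main obstacle.} The delicate point is keeping the non-uniform weight $(1+|x|^{p})^{-1}$ on \emph{every} error term created by the coupling. This is what forces the two preliminary reductions (dispose of bounded $x$ through part~(i), and push $\varepsilon$ below $x/2$ so that $x\pm\varepsilon$ stays comparable to $x$), and it dictates the key device on the events where $W$ and $W^{*}$ fall on opposite sides of $x$: one must use \emph{simultaneously} that one of $W,W^{*}$ is large — which, via \propref{ros}, contributes a factor of size at most $C_{p}(p/\log p)^{2p}x^{-2p}$ — and that $|T|>\varepsilon$, so that the Cauchy--Schwarz inequality produces exactly the factor $\sqrt{{\mathbb{P}}(|T|>\varepsilon)}$ that appears in the statement. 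Everything else is bookkeeping with the constants $\kappa_{p}$ and $C_{p}$.
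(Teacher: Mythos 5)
Your proposal is correct and follows essentially the same route as the paper's proof: the sandwiching $\{W\le x\}\leftrightarrow\{W^{*}\le x\pm\varepsilon\}$ with the overflow term ${\mathbb{P}}(|T|>\varepsilon)$ for part~(i), and for part~(ii) the reduction to $x\ge2$, the split on ${\mathbb{E}}|T|^{2p+2}$, the moment bounds on $W$ and $W^{*}$ from \propref{ros}, the shifted use of \thmref{star}(ii), the pointwise bound on $\Phi(x+\varepsilon)-\Phi(x)$, and the Cauchy--Schwarz step that yields $\sqrt{{\mathbb{P}}(|T|>\varepsilon)}$. The only (inessential) deviation is how you dispose of large $\varepsilon$: you substitute $\varepsilon\mapsto x/2$ and absorb the residue into the $\varepsilon$ term, while the paper simply notes that the crude Markov bound \eqref{a50} already gives \eqref{nou01e} once $\varepsilon\ge1$; both reductions are valid and neither changes the heart of the argument.
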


\begin{rem}
{\rm
If $|T|\le \varepsilon$ almost surely, then \eqref{bo01e} reduces to
\begin{equation}\label{compare01}
\sup_{x\in {\mathbb{R}}}|{\mathbb{P}}(W\le x)-\Phi(x)|\le \left(1+\dfrac{\sqrt{2\pi}}{4}+\dfrac{1}{\sqrt{2\pi}}\right)\varepsilon.
\end{equation}
In Theorem 1.1 in \cite{Goldstein05}, the author considered the following distance between $W$ and the standard normal random variable $Z$
\[\operatorname{d}(W,Z)=\sup_{h\in{\mathcal{H}}}\left|\mathbb{E}h(W)-\mathbb{E}h(Z)\right|,\]
where $\mathcal{H}$ is a class of measurable functions on the real line which contains the collection of indicators of all half lines.
When $\mathcal{H}$ coincides with the collection of indicators of all half lines, the author proved that 
(see the first half of (10) in \cite{Goldstein05}) 
\begin{equation}\label{compare03}
\sup_{x\in {\mathbb{R}}}|{\mathbb{P}}(W\le x)-\Phi(x)|\le \left(127+12\varepsilon\right)\varepsilon.
\end{equation}

}
\end{rem} 

\begin{proof}[Proof of \thmref{nonstar}]
Let $\varepsilon>0$ be arbitrary. Then by \eqref{bo01}, we have
\begin{equation}\label{bo06}
\begin{split}
{\mathbb{P}}(W\le x)-\Phi(x)&={\mathbb{P}}(W^{*}\le x+W^{*}-W)-\Phi(x)\\
& \le {\mathbb{P}}(W^{*}\le x+\varepsilon)-\Phi(x+\varepsilon)+\Phi(x+\varepsilon)-\Phi(x)\\
&\qquad+{\mathbb{P}}(W^{*}-W>\varepsilon)\\
& \le \left(1+\dfrac{\sqrt{2\pi}}{4}\right)\sqrt{{\mathbb{E}}T^2}+\dfrac{\varepsilon}{\sqrt{2\pi}}+{\mathbb{P}}(|W^{*}-W|>\varepsilon),
\end{split}
\end{equation}
and 
\begin{equation}\label{bo07}
\begin{split}
{\mathbb{P}}(W\le x)-\Phi(x)& \ge {\mathbb{P}}(W^{*}\le x-\varepsilon)-\Phi(x-\varepsilon)+\Phi(x-\varepsilon)-\Phi(x)\\
&\qquad -{\mathbb{P}}(W^{*}-W<-\varepsilon)\\
& \ge -\left(1+\dfrac{\sqrt{2\pi}}{4}\right)\sqrt{{\mathbb{E}}T^2}-\dfrac{\varepsilon}{\sqrt{2\pi}}-{\mathbb{P}}(|W^{*}-W|>\varepsilon).
\end{split}
\end{equation}
Combining \eqref{bo06} and \eqref{bo07}, we obtain \eqref{bo01e}.

To prove \eqref{nou01e}, it suffices to consider
$x\ge2$, as in the proof of \eqref{nou01}.  Similar to the proof of \eqref{a32}, we have
\begin{equation}\label{a50}
\begin{split}
&|P(W\le x)-\Phi(x)| \le \dfrac{C_p}{1+x^p}\left(\dfrac{p}{\log p}\right)^p \left(1+\sqrt{{\mathbb{E}}|T|^{2p+2}}\right).
\end{split}
\end{equation}
Therefore, if either ${\mathbb{E}}|T|^{2p+2}\ge 1$ or $\varepsilon\ge 1$, then \eqref{nou01e} holds. It 
remains to consider the case where ${\mathbb{E}}|T|^{2p+2}< 1$ and $\varepsilon<1$. In this case, similar to \eqref{a33a}, we have
\begin{equation}
\label{nou02}
{\mathbb{E}}|W^{*}|^{2p}=\dfrac{{\mathbb{E}}|W|^{2p+2}}{(2p+1){\mathbb{E}}W^2}\le \dfrac{2\kappa_{2p+2}}{2p+1}\le C_p \left(\dfrac{p}{\log p}\right)^{2p}.
\end{equation}
Since 
\begin{equation*}
\begin{split}
{\mathbb{P}}(W^*>x+\varepsilon)&={\mathbb{P}}(W^*>x+\varepsilon,T>\varepsilon)+{\mathbb{P}}(W^*>x+\varepsilon,T\le \varepsilon)\\
&\le {\mathbb{P}}(W^*>x,T>\varepsilon)+{\mathbb{P}}(W>x),
\end{split}
\end{equation*}
we have
\begin{equation}\label{nu02}
\begin{split}
 {\mathbb{P}}(W\le x)-\Phi(x)&=1-{\mathbb{P}}(W>x)-\Phi(x)\\
 &\le 1-{\mathbb{P}}(W^{*}> x+\varepsilon)-\Phi(x)+{\mathbb{P}}(W^{*}>x,T>\varepsilon).
\end{split}
\end{equation}
Combining \eqref{nou01}, \eqref{nou02} and \eqref{nu02}, we have
\begin{equation}\label{nou03}
\begin{split}
 &{\mathbb{P}}(W\le x)-\Phi(x) \le {\mathbb{P}}(W^{*}\le x+\varepsilon)-\Phi(x+\varepsilon)\\
 &\quad+\Phi(x+\varepsilon)-\Phi(x)+{\mathbb{P}}(W^{*}>x,T>\varepsilon)\\
& \le \dfrac{C_{p}}{1+x^p}\left(\dfrac{p}{\log p}\right)^p \left(\sqrt{{\mathbb{E}}T^2}+\sqrt{{\mathbb{E}}|T| ^{2p+2}}\right)\\
&\quad+\dfrac{\varepsilon e^{-x^2/2}}{\sqrt{2\pi}}+\sqrt{{\mathbb{P}}(|T|>\varepsilon)}\sqrt{{\mathbb{P}}(|W^{*}|>x)}\\
& \le \dfrac{C_{p}}{1+x^p}\left(\dfrac{p}{\log p}\right)^p \left(\sqrt{{\mathbb{E}}T^2}+\sqrt{{\mathbb{E}}|T| ^{2p+2}}+\varepsilon\right)\\
&\quad+\dfrac{\sqrt{{\mathbb{P}}(|T|>\varepsilon)}\sqrt{{\mathbb{E}}|W^{*}|^{2p}}}{x^p}\\
& \le \dfrac{C_{p}}{1+x^p}\left(\dfrac{p}{\log p}\right)^p \left(\sqrt{{\mathbb{E}}T^2}+\sqrt{{\mathbb{E}}|T| ^{2p+2}}+\varepsilon+\sqrt{{\mathbb{P}}(|T|>\varepsilon)}\right).
\end{split}
\end{equation}
Similarly, by noting that $x-\varepsilon>x-1\ge 1$, we can show that
\begin{equation}\label{nou04}
\begin{split}
&{\mathbb{P}}(W\le x)-\Phi(x)\\
& \ge -\dfrac{C_{p}}{1+x^p}\left(\dfrac{p}{\log p}\right)^p \left(\sqrt{{\mathbb{E}}T^2}+\sqrt{{\mathbb{E}}|T| ^{2p+2}}+\varepsilon+\sqrt{{\mathbb{P}}(|T|>\varepsilon)}\right).
\end{split}
\end{equation}
Combining \eqref{nou03} and \eqref{nou04}, we obtain \eqref{nou01e}.
\end{proof}

\section{Proofs of the main results}\label{sec:Proof}
The rate in the following proposition is better than that of \thmref{Ja1} in the case where
$\alpha\ge n^{1+\delta}$ for some $\delta>0$ fixed. We would like to note here that
when $1\le \alpha \le n/\log^2 n$  or  $\alpha\ge n^{1+\delta}$ for some $\delta>0$ fixed,
the convergence rate obtained in
\propref{Jack} is exactly the rate in Fulman's conjecture. \cite{CGR} also obtained the bound $O(\sqrt{\alpha}/n)$ for the case
$\alpha\ge n^{1+\delta}$ by applying induction with Stein's method. 

\begin{prop}\label{Jack} Let $n\ge 3$ be an integer. Let $\alpha\ge1$ and $W_{n,\alpha}$ be as in \eqref{J00}. 
	Then
	\begin{equation}\label{J11}
	\sup_{x\in {\mathbb{R}}}|{\mathbb{P}}_{\alpha}(W_{n,\alpha}\le
		x)-\Phi(x)|\le 8.2\max\left\{\dfrac{1}{\sqrt{n}},\dfrac{\sqrt{\alpha}\log n}{n}\right\}.
\end{equation} 
If, in addition, 
$\alpha\ge n^{1+\delta}$ for some $\delta:=\delta(\alpha,n)>0$, then
		\begin{equation}\label{J13}\sup_{x\in {\mathbb{R}}}|{\mathbb{P}}_{\alpha}(W_{n,\alpha}\le
		x)-\Phi(x)|\le \left(4.7+\dfrac{3.1}{\delta}\right)\dfrac{\sqrt{\alpha}}{n}.
		\end{equation}
\end{prop}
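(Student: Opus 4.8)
The plan is to derive \propref{Jack} from \thmref{nonstar}(i) applied to $W=W_{n,\alpha}$ under the Jack$_{\alpha}$ measure; the substantive work is to exhibit a good zero-bias coupling and to control $T=W^{*}-W$ in $L^{2}$ and in the tails. (The bound is trivial once $\alpha\ge n^{2}$, since then its right side exceeds $1$, so assume $1\le\alpha<n^{2}$.) For the set-up, recall two classical facts: writing the $\alpha$-content of a box $s=(i,j)$ as $c_{\alpha}(s)=\sqrt{\alpha}\,(j-1)-(i-1)/\sqrt{\alpha}$, one has $W_{n,\alpha}=\bigl(\tbinom n2\bigr)^{-1/2}\sum_{s\in\lambda}c_{\alpha}(s)$, and $\mathbb{E}_{\alpha}W_{n,\alpha}=0$, $\mathrm{Var}_{\alpha}(W_{n,\alpha})=1$ for $n\ge 2$, so \thmref{nonstar} applies. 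For the coupling I would use (a variant of) the construction of \cite{FulmanGoldstein}, which modifies $\lambda$ in a single box: delete a corner of the Jack$_{\alpha}$-distributed $\lambda\vdash n$ with the weights making the step reversible to obtain $\mu\vdash n-1$, then add a box back to $\mu$ by the Jack--Pieri rule to obtain $\lambda^{*}\vdash n$, and set $W^{*}=W_{n,\alpha}(\lambda^{*})$; this $W^{*}$ is $W$-zero biased, and $|T|$ is at most $\bigl(\tbinom n2\bigr)^{-1/2}$ times the difference of the $\alpha$-contents of the added and deleted boxes, hence
\[
|T|\ \le\ \frac{\sqrt{2}\,\bigl(\sqrt{\alpha}\,\lambda_{1}+\lambda_{1}'/\sqrt{\alpha}\bigr)}{n-1}.
\]

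Next, with $M:=\max\{1/\sqrt n,\ \sqrt{\alpha}/n\}$, the two estimates I would prove are: (a) $\sqrt{\mathbb{E}T^{2}}\le c_{1}M$ with a small explicit $c_{1}$, obtained from the exact low-order moment and covariance formulas for $W_{n,\alpha}$ under Jack$_{\alpha}$ (these are polynomials in $n$ and $\alpha$) together with the exchangeable-pair description of the coupling --- the same computation as the $L^{1}$ bound in \cite{FulmanGoldstein}, carried out in $L^{2}$; and (b) a tail bound of the form $\mathbb{P}_{\alpha}(|T|>tM)\le Ce^{-ct}$ for $t\ge t_{0}$, with absolute $C,c,t_{0}$. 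By the displayed bound on $|T|$, estimate (b) reduces to showing that under Jack$_{\alpha}$, $\alpha\ge 1$, the first row length $\lambda_{1}$ and first column length $\lambda_{1}'$ have rapidly decaying tails about their typical scales; this in turn is read off from the explicit product formula for $\mathbb{P}_{\alpha}(\{\lambda\})$ by bounding the total Jack$_{\alpha}$-weight of all $\lambda$ with $\lambda_{1}$ (resp.\ $\lambda_{1}'$) exceeding a prescribed value, deleting the offending boxes costing a controlled factor in the relevant hook-type product. (Estimate (a) is in fact implied by (b) up to constants, but treating $\mathbb{E}T^{2}$ directly is cleaner for the numerical constant.)

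Feeding (a) and (b) into \thmref{nonstar}(i) gives
\[
\sup_{x}\bigl|\mathbb{P}_{\alpha}(W_{n,\alpha}\le x)-\Phi(x)\bigr|\ \le\ \Bigl(1+\tfrac{\sqrt{2\pi}}{4}\Bigr)c_{1}M\ +\ \frac{\varepsilon}{\sqrt{2\pi}}\ +\ \mathbb{P}_{\alpha}(|T|>\varepsilon).
\]
Taking $\varepsilon=c_{2}M\log n$ with $c_{2}$ large enough, (b) makes $\mathbb{P}_{\alpha}(|T|>\varepsilon)$ negligible (say $\le n^{-2}\le M$), so the right side is at most a constant multiple of $M\log n$; bookkeeping of the constants produces the factor $8.2$ and hence \eqref{J11} --- and when $\log^{2}n\le\alpha\le n/\log^{2}n$ one has $M\log n=1/\sqrt n$, which is \remref{rm1}. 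For \eqref{J13}, the hypothesis $\alpha\ge n^{1+\delta}$ sharpens (b): the geometric decay rate of the law of $\lambda_{1}$ becomes of order $n/\alpha\le n^{-\delta}$, so $\mathbb{P}_{\alpha}(|T|>tM)$ is already negligible once $t$ exceeds a constant times $1/\delta$; choosing $\varepsilon$ of order $\tfrac1\delta\cdot\tfrac{\sqrt{\alpha}}{n}$ (with no logarithm) and splitting the right side into the $\bigl(1+\tfrac{\sqrt{2\pi}}{4}\bigr)\sqrt{\mathbb{E}T^{2}}$ term (which contributes $4.7\,\sqrt{\alpha}/n$) and the $\varepsilon/\sqrt{2\pi}+\mathbb{P}_{\alpha}(|T|>\varepsilon)$ terms (which contribute $(3.1/\delta)\sqrt{\alpha}/n$) yields \eqref{J13}.

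The main obstacle is estimate (b): one must show the tails of $\lambda_{1}$ and $\lambda_{1}'$ under Jack$_{\alpha}$ decay quickly, with the constants explicit and sharp enough to reach $8.2$, and --- for \eqref{J13} --- track precisely how the decay rate improves with $\alpha$, so that the $\log n$ in \eqref{J11} is replaced by $1/\delta$. Given \thmref{nonstar}, estimate (a) and the assembly in the preceding paragraph are then routine.
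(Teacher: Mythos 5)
Your high-level strategy matches the paper's: decompose via the single-box structure of the zero-bias coupling from \cite{FulmanGoldstein}, control $\sqrt{\mathbb{E}T^2}$ from the explicit low moments of $\eta_{n,\alpha}$, control $\mathbb{P}_{\alpha}(|T|>\varepsilon)$ from the tail of $\lambda_1$ (via Fulman's estimate ${\mathbb{P}}_{\alpha}(\lambda_1=l)\le (n/\alpha)^l\alpha l/(l!)^2$ plus Stirling), and feed everything into \thmref{nonstar}(i). That much is right. However, your calibration of $\varepsilon$ is off in a way that would not recover \eqref{J11}. You set $M=\max\{1/\sqrt n,\sqrt\alpha/n\}$ and propose $\varepsilon=c_2M\log n$, claiming the right side of \thmref{nonstar}(i) then reduces to a constant times $M\log n$ and that $M\log n=1/\sqrt n$ when $\log^2 n\le\alpha\le n/\log^2 n$. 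But $M\log n=\max\{\log n/\sqrt n,\ \sqrt{\alpha}\log n/n\}$, which is strictly larger than the target $\max\{1/\sqrt n,\ \sqrt{\alpha}\log n/n\}$ precisely when $\alpha\le n/\log^2 n$: for $\alpha=1$, for instance, your bound is $\log n/\sqrt n$ rather than $1/\sqrt n$. The paper avoids this by taking $q=K^2\max\{1,\alpha\log^2n/n\}$ and $\varepsilon=2e\sqrt{2q}/\sqrt{n-1}$, i.e.\ $\varepsilon\asymp\max\{1/\sqrt n,\sqrt\alpha\log n/n\}$ with no spurious $\log n$ on the $1/\sqrt n$ branch; the tail bound $\mathbb{P}_{\alpha}(|T|>\varepsilon)$ still comes out negligible (indeed $O(n^{-1/2})$) because in the regime $\alpha\lesssim n/\log^2 n$ the exponent $e\sqrt{qn/\alpha}$ in Lemma~\ref{Ful13} is already huge. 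So your $\varepsilon$ must be re-calibrated or the claimed factor of $8.2$ and Remark~\ref{rm1} do not follow.

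A second, smaller gap concerns your estimate (b). You reduce the tail of $T=\eta^{*}_{n,\alpha}-\eta_{n,\alpha}$ to tails of $\lambda_1$ and $\lambda_1'$ via a pointwise bound on $|T|$, which implicitly treats $\eta^{*}_{n,\alpha}$ as the $\alpha$-content of a box of a concrete partition $\lambda^*$ whose marginal is Jack$_\alpha$. But in the coupling actually used (and in \cite{FulmanGoldstein}), $V_{n,\alpha}$ is held fixed and only $\eta_{n,\alpha}$ is replaced by a variable $\eta^{*}_{n,\alpha}$ with the $\eta_{n,\alpha}$-zero-biased law; $\eta^{*}_{n,\alpha}$ is not literally the content of a box of a Jack-distributed partition of $n$. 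Its tail must therefore be controlled by an identity for the zero-bias transform, namely $\mathbb{P}(X^*>x)=\mathbb{E}[X(X-x)\mathbf{1}(X>x)]/\sigma^2$ (see \eqref{Fu001}), which converts the tail of $\eta^*$ into a truncated second-moment bound on $\eta$ and hence on $\lambda_1$. This extra step is what Lemma~\ref{Ful13} carries out and is missing from your sketch. For \eqref{J13} your qualitative observation — that the geometric rate improves to roughly $n/\alpha$ when $\alpha\ge n^{1+\delta}$, letting one drop the $\log n$ and replace it by $1/\delta$ — is exactly right, and is how the paper proceeds after re-choosing $q=\alpha/((L\delta)^2n)$.
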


\begin{rem}{\rm
If $\alpha>n$, then we can write $\alpha=n^{1+\delta}$, where 
\[\delta=\dfrac{\log\alpha-\log n}{\log n}>0.\]
Applying \eqref{J13}, we have
		\begin{equation}\label{J14}
\sup_{x\in {\mathbb{R}}}|{\mathbb{P}}_{\alpha}(W_{n,\alpha}\le x)-\Phi(x)|
\le \dfrac{4.7\log\alpha-1.6\log n}{\log\alpha-\log n}\dfrac{\sqrt{\alpha}}{n}.
		\end{equation}
We make some notes as follows.
\begin{description}
\item (i) If $\alpha \sim Kn$ for some $K>1$ fixed, then the rate
obtained in \eqref{J14} is $O\left(\dfrac{\sqrt{\alpha} \log n}{n}\right)$
which is the same as the rate obtained in \eqref{J11}.

\item (ii) If $\alpha \sim n\left(\log n\right)^K $ for some $K>0$ fixed, then the rate obtained in \eqref{J14} is 
$O\left(\dfrac{\sqrt{\alpha}\log n }{n\log(\log n)}\right)$
which is better than the rate obtained in \eqref{J11}.

\item (iii) If $\alpha\ge n^{1+\delta}$ for some $\delta>0$ fixed,
then the convergence rate obtained in
\eqref{J14} is $O\left(\dfrac{\sqrt{\alpha}}{n}\right)$ which is exactly the rate in Fulman's conjecture.

\end{description}

}
\end{rem}

We will prove \propref{Jack} by applying \thmref{nonstar}. In
\cite{Kerov00}, the author proved that there is a growth process giving a sequence of partitions
$(\lambda(1),\dots,\lambda(n))$ with $\lambda(j)$ distributed
according to the Jack$_\alpha$ measure on partitions of size $j$. We
refer to \cite{Fulman04} for details. Given Kerov's process, let
$X_{1,\alpha}=0$, $X_{j,\alpha}=c_{\alpha}(a)$ where $a$ is the box
added to $\lambda(j-1)$ to obtain $\lambda(j)$ and the
``$\alpha$-content'' $c_\alpha(a)$ of a box $a$ is defined to be
$\alpha(\text{column number of }a-1)- (\text{row number of }a-1)$, $j\ge 2$. Then one can write (see \cite{Fulman06b, FulmanGoldstein})
\begin{equation}\label{Fu01}W_{n,\alpha}=\dfrac{\sum_{j=1}^n X_{j,\alpha}}{\sqrt{\alpha{n
\choose 2}}}.\end{equation} Therefore, constructing $\nu$ from the
Jack$_\alpha$ measure on partitions of $n-1$ and then taking
one step in Kerov's growth process yields $\lambda$ with the
Jack$_\alpha$ measure on partitions of $n$, we have \begin{equation}\label{FG6}
W_{n,\alpha}=V_{n,\alpha}+\eta_{n,\alpha},\end{equation} where
\begin{equation}\label{FG7}
V_{n,\alpha}=\dfrac{\sum_{x\in\nu}c_{\alpha}(x)}{\sqrt{\alpha{n\choose
2}}}= \sqrt{\dfrac{n-2}{n}} W_{n-1,\alpha},\text{  }
\eta_{n,\alpha}=\dfrac{X_{n,\alpha}}{\sqrt{\alpha{n \choose
2}}}=\dfrac{c_{\alpha}(\lambda/\nu)}{\sqrt{\alpha{n \choose 2}}},
\end{equation} 
and $c_{\alpha}(\lambda/\nu)$ denotes the $\alpha$-content of the box
added to $\nu$ to obtain $\lambda$.
\cite{Fulman06b} proved that
\begin{equation}\label{F11z} EW_{n,\alpha}=0,\ EW_{n,\alpha}^2=1,\end{equation} 
\begin{equation}\label{F11} E\eta_{n,\alpha}=0,\ E\eta_{n,\alpha}^2=
\dfrac{2}{ n},
\end{equation}  
and
\begin{equation}\label{F13}E\eta_{n,\alpha}^4=\dfrac{2}{
n^2}\left(\dfrac{4n-6}{n-1}+\dfrac{2(\alpha-1)^2}{\alpha(n-1)}\right).
\end{equation}  
From Theorems 3.1 and 4.1 in
\cite{FulmanGoldstein}, there
exists a random variable $\eta^{*}_{n,\alpha}$ defined on the same probability
space with $\eta_{n,\alpha}$, and satisfying that
$\eta^{*}_{n,\alpha}$ has $\eta_{n,\alpha}$-zero biased distribution
and that \begin{equation}\label{FG9} W^{*}_{n,\alpha}=V_{n,\alpha}+\eta^{*}_{n,\alpha}\end{equation}
has $W_{n,\alpha}$-zero biased distribution. Here and thereafter, we denote 
\[T_{n,\alpha}=\eta_{n,\alpha}-\eta_{n,\alpha}^{*}.\]

The following lemma gives a bound for ${\mathbb{E}}(\eta_{n,\alpha}^{*})^2$.

\begin{lem}\label{Ful12}
For $\alpha\ge 1$, we have 
\begin{equation}\label{F15}
\begin{split} 
{\mathbb{E}}(\eta_{n,\alpha}^{*})^2&=\dfrac{1}{
3n}\left(\dfrac{4n-6}{n-1}+\dfrac{2(\alpha-1)^2}{\alpha(n-1)}\right)\\
&\le \dfrac{1}{
3n}\left(4+\dfrac{2\alpha}{n-1}\right).
\end{split}
\end{equation}
\end{lem}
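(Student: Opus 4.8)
The plan is to read off $\mathbb{E}(\eta_{n,\alpha}^{*})^2$ directly from the defining property \eqref{zer} of the zero-bias transform, using the cubic test function. Write $\sigma_\eta^2=\mathbb{E}\eta_{n,\alpha}^2$, which equals $2/n$ by \eqref{F11}. Since $\eta_{n,\alpha}$ takes only finitely many values (the $\alpha$-content of a box in a partition of $n$ is bounded in absolute value by $(\alpha+1)(n-1)$), all of its moments are finite, and in particular $\mathbb{E}\bigl|\eta_{n,\alpha}\cdot\eta_{n,\alpha}^3\bigr|=\mathbb{E}\eta_{n,\alpha}^4<\infty$, so \eqref{zer} may be applied with $f(x)=x^3$, $f'(x)=3x^2$. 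This gives
\[
\mathbb{E}\eta_{n,\alpha}^4=\mathbb{E}\bigl[\eta_{n,\alpha}f(\eta_{n,\alpha})\bigr]=\sigma_\eta^2\,\mathbb{E}f'(\eta_{n,\alpha}^{*})=3\sigma_\eta^2\,\mathbb{E}(\eta_{n,\alpha}^{*})^2,
\]
hence $\mathbb{E}(\eta_{n,\alpha}^{*})^2=\mathbb{E}\eta_{n,\alpha}^4/(3\sigma_\eta^2)$.

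Next I would substitute the known moments: $\sigma_\eta^2=2/n$ from \eqref{F11} and $\mathbb{E}\eta_{n,\alpha}^4=\frac{2}{n^2}\bigl(\frac{4n-6}{n-1}+\frac{2(\alpha-1)^2}{\alpha(n-1)}\bigr)$ from \eqref{F13}. The factor $2$ and one power of $n$ cancel, and we obtain exactly
\[
\mathbb{E}(\eta_{n,\alpha}^{*})^2=\frac{1}{3n}\left(\frac{4n-6}{n-1}+\frac{2(\alpha-1)^2}{\alpha(n-1)}\right),
\]
which is the asserted identity.

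For the stated upper bound it then remains to estimate the two terms in the parentheses. First, $\frac{4n-6}{n-1}\le 4$, since this is equivalent to $4n-6\le 4n-4$. Second, for $\alpha\ge 1$ we have $\frac{(\alpha-1)^2}{\alpha}=\alpha-2+\frac{1}{\alpha}\le\alpha$, so $\frac{2(\alpha-1)^2}{\alpha(n-1)}\le\frac{2\alpha}{n-1}$. Adding these bounds and multiplying by $\frac{1}{3n}$ yields $\mathbb{E}(\eta_{n,\alpha}^{*})^2\le\frac{1}{3n}\bigl(4+\frac{2\alpha}{n-1}\bigr)$.

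There is essentially no hard step here: the only point worth flagging is the choice of the cubic test function in \eqref{zer}, which converts a statement about the second moment of $\eta_{n,\alpha}^{*}$ into one about the already-known fourth moment of $\eta_{n,\alpha}$; everything else is arithmetic together with the two elementary inequalities above.
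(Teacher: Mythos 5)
Your argument is exactly the one in the paper: apply the zero-bias identity \eqref{zer} with $f(x)=x^3$ to get ${\mathbb{E}}(\eta_{n,\alpha}^{*})^2={\mathbb{E}}\eta_{n,\alpha}^4/(3\,{\mathbb{E}}\eta_{n,\alpha}^2)$, then substitute \eqref{F11} and \eqref{F13}. The extra details you supply (finiteness of moments, and the two elementary bounds $\frac{4n-6}{n-1}\le 4$ and $\frac{(\alpha-1)^2}{\alpha}\le\alpha$ for $\alpha\ge 1$) are correct and simply make explicit what the paper leaves implicit.
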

\begin{proof}
Applying \eqref{zer} with $f(x)=x^3$,
 we have
\begin{equation}\label{F15b} {\mathbb{E}}(\eta_{n,\alpha}^{*})^2=\dfrac{{\mathbb{E}}(\eta_{n,\alpha})^4}{3 {\mathbb{E}}\eta_{n,\alpha}^{2}}.
\end{equation}
Combining \eqref{F11}, \eqref{F13} and \eqref{F15b}, we obtain  
\eqref{F15}.
 \end{proof}

For a partition
$\lambda$ of a positive integer $n$, we recall that the length of row $i$ of
$\lambda$ and the length of column $i$ of $\lambda$ are denoted by
$\lambda_i$ and $\lambda_{i}^{'}$, respectively.

From a computation in the proof of Lemma 6.6 in \cite{Fulman04} and Stirling's formula, we have the following lemma.

\begin{lem}\label{Fu92}
Let $\alpha>0$. Then for $1\le l\le n$, we have
\begin{equation}\label{Fu90x}
{\mathbb{P}}_{\alpha}(\lambda_1=l)\le\dfrac{\alpha}{2\pi}\left(\dfrac{ne^2}{\alpha l^2}\right)^l.
\end{equation}
\end{lem}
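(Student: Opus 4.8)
The plan is to follow the reduction in the proof of Lemma~6.6 of \cite{Fulman04} to obtain an explicit factorial upper bound for $\mathbb{P}_\alpha(\lambda_1=l)$, and then apply Stirling's formula to bring it into the stated form. Throughout, $\alpha>0$ and $1\le l\le n$ are arbitrary.

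First I would peel off the first row: write a partition $\lambda\vdash n$ with $\lambda_1=l$ as $\lambda=(l,\mu)$, where $\mu\vdash n-l$ and $\mu_1\le l$. For any box of $\lambda$ lying strictly below the first row, both its arm and its leg are unchanged when one passes to $\mu$, because the first row sits above such a box and hence affects neither the boxes to its right in the same row nor the boxes below it in the same column. Consequently the product of the Jack-weight factors over these boxes equals $\alpha^{\,n-l}(n-l)!\big/\mathbb{P}^{(n-l)}_\alpha(\mu)$, where $\mathbb{P}^{(n-l)}_\alpha$ is the Jack$_\alpha$ measure on partitions of $n-l$. For a box $(1,j)$ in the first row, the arm is $l-j$ and the leg is $\mu'_j$, so its two factors are $\alpha(l-j)+\mu'_j+1$ and $\alpha(l-j)+\mu'_j+\alpha$; discarding the nonnegative terms $\mu'_j$ lower-bounds the first-row contribution by $\prod_{j=1}^{l}(\alpha(l-j)+1)(\alpha(l-j)+\alpha)=\alpha^l\, l!\prod_{k=0}^{l-1}(\alpha k+1)$.

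Substituting these two estimates into the definition of $\mathbb{P}_\alpha$ and then summing over all admissible $\mu$ (with $\sum_\mu\mathbb{P}^{(n-l)}_\alpha(\mu)\le1$) gives
\[
\mathbb{P}_\alpha(\lambda_1=l)\;\le\;\binom{n}{l}\frac{1}{\prod_{k=0}^{l-1}(\alpha k+1)}\;\le\;\binom{n}{l}\frac{1}{\alpha^{\,l-1}(l-1)!},
\]
the last step using $\alpha k+1\ge\alpha k$ for $k\ge1$; this is, in essence, the output of Fulman's computation. It then remains only to estimate this factorial expression. Using $\binom{n}{l}\le n^l/l!$ and Stirling's formula in the form $m!>\sqrt{2\pi m}\,(m/e)^m$ for $m\ge1$, one has $l!\,(l-1)!=l!\cdot(l!/l)>2\pi\,(l/e)^{2l}$, whence
\[
\mathbb{P}_\alpha(\lambda_1=l)\;\le\;\frac{n^l}{\alpha^{\,l-1}\,l!\,(l-1)!}\;\le\;\frac{\alpha}{2\pi}\Big(\frac{ne^2}{\alpha l^2}\Big)^{l},
\]
which is \eqref{Fu90x}.

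I do not expect a serious obstacle here; the only delicate point is the constant. To land exactly on the prefactor $\alpha/(2\pi)$ one must use the sharp lower bound $m!>\sqrt{2\pi m}\,(m/e)^m$ rather than a cruder version, and one must retain the ``$+1$'' in the $k=0$ factor of $\prod_{k=0}^{l-1}(\alpha k+1)$ so as not to lose the surviving factor $\alpha$. Everything else is bookkeeping, and one should note in passing that all inequalities are valid for every $\alpha>0$ and every $1\le l\le n$, so no separate treatment of small $\alpha$ or of the endpoints $l=1,n$ is required.
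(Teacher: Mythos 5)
Your proof is correct and takes essentially the same route as the paper: the paper quotes the factorial bound $\mathbb{P}_\alpha(\lambda_1=l)\le(n/\alpha)^l\,\alpha l/(l!)^2$ from the proof of Lemma~6.6 in \cite{Fulman04} and then applies Stirling's inequality $l!\ge\sqrt{2\pi l}\,(l/e)^l$, while your intermediate bound $\binom{n}{l}/\bigl(\alpha^{l-1}(l-1)!\bigr)\le n^l/\bigl(\alpha^{l-1}\,l!\,(l-1)!\bigr)$ is algebraically identical to Fulman's since $(n/\alpha)^l\,\alpha l/(l!)^2=n^l/\bigl(\alpha^{l-1}\,l!\,(l-1)!\bigr)$. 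The only difference is that you rederive Fulman's bound from the definition of the Jack measure rather than citing it, which is a harmless addition of detail rather than a different argument.
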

\begin{proof}
It is proved by \cite{Fulman04} that
\begin{equation}\label{Fu70}{\mathbb{P}}_{\alpha}(\lambda_1=l)\le
\left(\dfrac{n}{\alpha}\right)^l\dfrac{\alpha l}{(l!)^2}.\end{equation}
By Stirling's formula, we have for all $l\ge 1$,
\begin{equation}\label{Fu70a}
l!\ge\sqrt{2\pi l}\left(\dfrac{l}{ e}\right)^{l}.
\end{equation}
Combining \eqref{Fu70} and \eqref{Fu70a}, we have \eqref{Fu90x}.
\end{proof}

In order to apply \thmref{nonstar}, we need to bound ${\mathbb{P}}(|T_{n,\alpha}|>\varepsilon)$ for suitably chosen $\varepsilon$.
The following lemma shows that $\left|T_{n,\alpha}\right|$ has a very light tail.

\begin{lem}\label{Ful13}
For all $\alpha\ge 1$ and $q> 1$, we have 
\begin{equation*}
\begin{split} 
{\mathbb{P}}_{\alpha}\left(|T_{n,\alpha}|> \dfrac{2e\sqrt{2q}}{\sqrt{n-1}}\right)
&\le \dfrac{
	\alpha}{\pi(q-1)q^{e \sqrt{qn/\alpha}}}+\dfrac{\alpha^2 q\left( e \sqrt{qn/\alpha}(q-1)+q+1\right)}{\pi(n-1)(q-1)^3q^{ e \sqrt{qn/\alpha}}}.
\end{split}
\end{equation*}
\end{lem}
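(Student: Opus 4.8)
The plan is to split, via the triangle inequality $|T_{n,\alpha}| = |\eta_{n,\alpha}-\eta^{*}_{n,\alpha}| \le |\eta_{n,\alpha}| + |\eta^{*}_{n,\alpha}|$, into
\[
\mathbb{P}_{\alpha}\left(|T_{n,\alpha}| > \frac{2e\sqrt{2q}}{\sqrt{n-1}}\right) \le \mathbb{P}_{\alpha}\left(|\eta_{n,\alpha}| > \frac{e\sqrt{2q}}{\sqrt{n-1}}\right) + \mathbb{P}_{\alpha}\left(|\eta^{*}_{n,\alpha}| > \frac{e\sqrt{2q}}{\sqrt{n-1}}\right),
\]
and to show that the first summand produces the first (main) term of the lemma and the second summand the second term. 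Write $l_0 = e\sqrt{qn/\alpha}$ and $t = e\sqrt{2q}/\sqrt{n-1}$, and note the arithmetic fact behind \lemref{Fu92}: once $l > l_0$ (resp.\ $l > e\sqrt{q\alpha n}$) the base $ne^{2}/(\alpha l^{2})$ (resp.\ $\alpha ne^{2}/l^{2}$) is already $< 1/q$, so \lemref{Fu92} gives $\mathbb{P}_{\alpha}(\lambda_1 = l) \le \frac{\alpha}{2\pi}q^{-l}$ on that range.

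For $\mathbb{P}_{\alpha}(|\eta_{n,\alpha}| > t)$ I translate the event into statements about $\lambda_1$ and $\lambda_1'$. If $(r,c)$ is the box $\lambda/\nu$ added in Kerov's process, then $c_{\alpha}(\lambda/\nu) = \alpha(c-1)-(r-1)$ and, $\lambda$ being a partition, $c \le \lambda_1$, $r \le \lambda_1'$, whence $-(\lambda_1'-1) \le c_{\alpha}(\lambda/\nu) \le \alpha(\lambda_1-1)$. After division by $\sqrt{\alpha\binom{n}{2}}$ as in \eqref{FG7}, the precise value of $t$ is chosen so that $\{\eta_{n,\alpha} > t\} \subseteq \{\lambda_1 - 1 > l_0\}$ and $\{\eta_{n,\alpha} < -t\} \subseteq \{\lambda_1' - 1 > \alpha l_0\}$. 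I apply \lemref{Fu92} to the first event and, using the conjugation duality $\mathbb{P}_{\alpha}(\{\lambda\}) = \mathbb{P}_{1/\alpha}(\{\lambda'\})$ (immediate from the Jack product formula by swapping arm and leg, so that $\mathbb{P}_{\alpha}(\lambda_1'=l) = \mathbb{P}_{1/\alpha}(\lambda_1=l)$), \lemref{Fu92} with $\alpha$ replaced by $1/\alpha$ to the second. Summing the geometric series over integers exceeding $l_0+1$, resp.\ $\alpha l_0+1$, gives $\mathbb{P}_{\alpha}(\eta_{n,\alpha} > t) < \frac{\alpha}{2\pi(q-1)}q^{-l_0}$ and $\mathbb{P}_{\alpha}(\eta_{n,\alpha} < -t) < \frac{1}{2\pi\alpha(q-1)}q^{-\alpha l_0} \le \frac{1}{2\pi(q-1)}q^{-l_0}$ (here $\alpha \ge 1$, $q>1$); adding and using $\alpha+1 \le 2\alpha$ yields the first term $\frac{\alpha}{\pi(q-1)}q^{-l_0}$.

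For $\mathbb{P}_{\alpha}(|\eta^{*}_{n,\alpha}| > t)$ I use that $\eta^{*}_{n,\alpha}$ has density $g(x) = \mathbb{E}[\eta_{n,\alpha}\mathbf{1}(\eta_{n,\alpha} > x)]/\sigma^{2}$ with $\sigma^{2} = \mathbb{E}\eta_{n,\alpha}^{2} = 2/n$ by \eqref{F11}, and estimate $\mathbb{P}_{\alpha}(\eta^{*}_{n,\alpha} > t) = \int_{t}^{\infty} g(x)\,\mathrm{d}x$ and $\mathbb{P}_{\alpha}(\eta^{*}_{n,\alpha} < -t) = \int_{-\infty}^{-t} g(x)\,\mathrm{d}x$ separately. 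With $c = \sqrt{2\alpha/(n(n-1))}$, for $x \ge t$ the event $\{\eta_{n,\alpha} > x\}$ forces $\lambda_1 - 1 > x/c$ and on it $\eta_{n,\alpha} \le (\lambda_1-1)c$, so feeding this into $g$ and applying \lemref{Fu92} (via the $1/q$ bound) gives $g(x) \le \frac{nc}{2}\cdot\frac{\alpha}{2\pi}\sum_{k > x/c} k\,q^{-(k+1)}$, an arithmetic--geometric tail of order $\big((x/c)(q-1)+q\big)q^{-x/c}/(q-1)^{2}$. Integrating, with the substitution $y = x/c$ turning the integral into $\int_{l_0}^{\infty} q^{-y}(ay+b)\,\mathrm{d}y = \frac{q^{-l_0}}{\log q}\big(al_0 + a/\log q + b\big)$, produces a bound linear in $l_0$; using $\frac{nc^{2}}{2} = \frac{\alpha}{n-1}$ and the elementary inequality $\log q \ge (q-1)/q$ to turn $1/\log q$ into $q/(q-1)$, one lands on the displayed second term $\frac{\alpha^{2}q}{\pi(n-1)(q-1)^{3}}\big(l_0(q-1)+q+1\big)q^{-l_0}$. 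The lower tail is handled identically with $\lambda_1'$ and the duality above, its contribution carrying $1/\alpha$ and $q^{-\alpha l_0}$ and so being absorbed; combining the four estimates finishes the proof.

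The routine parts are the two geometric and two arithmetic--geometric summations and the single explicit integral. The main obstacle is the $\eta^{*}_{n,\alpha}$ estimate: one must pass from the zero-bias density to a tail of $\lambda_1$ weighted only linearly in the summation index, carry the $\log q$ that $\int_{l_0}^{\infty}q^{-y}(ay+b)\,\mathrm{d}y$ produces into the correct power of $q-1$, and keep the combinatorial prefactors in the clean form above; verifying that the negative-content contributions (which enter through the conjugation duality with $1/\alpha$ and $q^{-\alpha l_0}$ in place of $\alpha$ and $q^{-l_0}$) are genuinely dominated is precisely where the hypothesis $\alpha \ge 1$ is used.
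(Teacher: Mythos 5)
Your decomposition and the treatment of the $\mathbb{P}_{\alpha}(|\eta_{n,\alpha}|>t)$ term match the paper exactly: triangle inequality to split $T_{n,\alpha}$, \lemref{Fu92} on the range where $ne^{2}/(\alpha l^{2})<1/q$, the conjugation duality ${\mathbb{P}}_{\alpha}(\lambda_1'=l)={\mathbb{P}}_{1/\alpha}(\lambda_1=l)$ for the negative-content side, a geometric tail, and then $\alpha\ge 1$ to absorb the $1/\alpha$ contribution.

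For the $\eta^{*}_{n,\alpha}$ tail the paper does something equivalent but cleaner: it applies the zero-bias identity \eqref{zer} to $f_x(w)=(w-x)\mathbf{1}(w>x)$ to get
\[
{\mathbb{P}}(\eta^{*}_{n,\alpha}>t)=\dfrac{{\mathbb{E}}\bigl[\eta_{n,\alpha}(\eta_{n,\alpha}-t)\mathbf{1}(\eta_{n,\alpha}>t)\bigr]}{\sigma^{2}},
\]
and then bounds that expectation by a single arithmetic--geometric sum over $k$ with the exact identity $\sum_{k\ge1}k(k+m)q^{-k}=q\bigl((q+1)+m(q-1)\bigr)/(q-1)^{3}$, giving the clean $q+1$ in the lemma. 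Your route — bound the density $g(x)={\mathbb{E}}[\eta\mathbf{1}(\eta>x)]/\sigma^{2}$ by a continuous envelope in $x$, then integrate — is the same quantity by Fubini, but as you have carried it out you lose a bit: rounding the summation index down to $x/c$ costs you a factor $q$ in the inner bracket, and converting $1/\log q$ via $\log q\ge(q-1)/q$ costs another. Your scheme as written lands on $l_0(q-1)+2q$ in place of the lemma's $l_0(q-1)+q+1$, so you have proved a marginally weaker inequality, not the stated one. The fix is either to imitate the paper and evaluate the integer sum exactly (which is what Fubini hands you if you exchange $\int_{t}^{\infty}$ with $\sum_{k}$ before approximating), or to accept the slightly larger constant; as it stands the claim that you ``land on the displayed second term'' is not quite accurate. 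Everything else — the choice of $t$, the role of $\alpha\ge1$ in dominating the conjugate tail, the prefactor $nc^{2}/2=\alpha/(n-1)$ — is correct and matches the paper.
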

\begin{proof}
First, we take an arbitrary $\alpha>0$.
It follows from  \eqref{Fu90x} that
\begin{equation}\label{Fu84zz}
{\mathbb{P}}_{\alpha}(\lambda_1=k+1)\le \dfrac{\alpha}{2\pi q^{k+1}}
\end{equation}
for all $k\ge
e\sqrt{qn/\alpha}$. Therefore
\begin{equation}\label{Fu71}
\begin{split}
{\mathbb{P}}_{\alpha}\left(\lambda_1-1>  e \sqrt{qn/\alpha}\right)
&={\mathbb{P}}_{\alpha}\left(\lambda_1-1 \ge \lfloor e \sqrt{qn/\alpha}\rfloor +1\right)\\
&=\sum_{k\ge \lfloor e \sqrt{qn/\alpha}\rfloor+1} {\mathbb{P}}_{\alpha}(\lambda_1=k+1)\\
&\le \dfrac{\alpha}{2\pi}\sum_{k\ge \lfloor  e \sqrt{qn/\alpha}\rfloor +1}\dfrac{1}{q^{k+1}}\\
&=\dfrac{q\alpha}{2\pi (q-1)q^{\lfloor  e \sqrt{qn/\alpha}\rfloor +2}}\\
& \le \dfrac{\alpha}{2\pi (q-1) q^{e \sqrt{qn/\alpha}}}.
\end{split}
\end{equation}
We note that from the definition of Jack measure, ${\mathbb{P}}_{\alpha}(\lambda)={\mathbb{P}}_{1/\alpha}(\lambda^t)$, where $\lambda^t$ is the transpose
 partition of $\lambda$. 
Applying
\eqref{Fu71} with $\alpha$ replaced by $1/\alpha$, we have
\begin{equation}\label{Fu72}
{\mathbb{P}}_{\alpha}(\lambda_{1}^{'}-1> e\sqrt{q\alpha n})\le
\dfrac{1}{{2\pi}\alpha (q-1)q^{e\sqrt{q\alpha n}}}.
\end{equation}
Since $|X_{n,\alpha}|\le
\max\{\alpha(\lambda_1-1),\lambda_{1}^{'}-1\}$, it follows from \eqref{Fu71} and \eqref{Fu72} that
\begin{equation}
\begin{split}
{\mathbb{P}}_{\alpha}\left(|\eta_{n,\alpha}|>
\dfrac{e\sqrt{2q}}{\sqrt{n-1}}\right)
&={\mathbb{P}}_{\alpha}\left(\dfrac{\sqrt{2}|X_{n,\alpha}|}{\sqrt{\alpha n(n-1)}}>
\dfrac{e\sqrt{2q}}{\sqrt{n-1}}\right)\\
&\le {\mathbb{P}}_{\alpha}\left(\max\{\alpha(\lambda_1-1),\lambda_{1}^{'}-1\}>
e\sqrt{q\alpha n}\right)\\
&\le {\mathbb{P}}_{\alpha}\left(\lambda_1-1> e\sqrt{qn/\alpha }\right)
+{\mathbb{P}}_{\alpha}\left(\lambda_{1}^{'}-1> e\sqrt{q\alpha n}\right)\\
&\le \dfrac{
\alpha}{{2\pi}(q-1)q^{e \sqrt{qn/\alpha}}}+\dfrac{
1}{2\pi\alpha (q-1)q^{e\sqrt{q\alpha n}}}.
\end{split}
\end{equation}
For $\alpha\ge 1$, it reduces to
\begin{equation}\label{a70}
\begin{split}
{\mathbb{P}}_{\alpha}\left(|\eta_{n,\alpha}|>
\dfrac{e\sqrt{2q}}{\sqrt{n-1}}\right)
&\le \dfrac{
	\alpha}{{\pi}(q-1)q^{e \sqrt{qn/\alpha}}}.
\end{split}
\end{equation}

Recall that if $X$ is a random variable with ${\mathbb{E}}X=0$, ${\mathbb{E}}X^2=\sigma^2$ 
and if $X^*$ has $X$-zero-biased distribution, then for $x>0$, applying \eqref{zer} with $f_x(w)=(w-x)\textbf{1}(w> x)$, we have
\begin{equation}\label{Fu001}
{\mathbb{P}}(X^*> x)={\mathbb{E}}[X(X-x){\textbf{1}}(X> x)]/\sigma^2.
\end{equation}
By using \eqref{Fu001} and \eqref{Fu84zz}, and 
noting that
\[\eta_{n,\alpha}\le \dfrac{\sqrt{2\alpha}(\lambda_1-1)}{\sqrt{n(n-1)}},\]
 we have 
\begin{equation}\label{F97x}
\begin{split}
&{\mathbb{P}}_{\alpha}\left(\eta_{n,\alpha}^{*}> \dfrac{e\sqrt{2q}}{\sqrt{n-1}}\right)\\
&\quad= \dfrac{n}{2} {\mathbb{E}}\left(\eta_{n,\alpha}\left(\eta_{n,\alpha}-\dfrac{e\sqrt{2q}}{\sqrt{n-1}} \right) {\textbf{1}}
\left(\eta_{n,\alpha}> \dfrac{e\sqrt{2q}}{\sqrt{n-1}}\right)\right)\\
&\quad\le \dfrac{\alpha}{n-1} {\mathbb{E}}\left((\lambda_1-1)\left(\lambda_1-1-e \sqrt{qn/\alpha}\right) {\textbf{1}}
\left(\lambda_1-1>  e \sqrt{qn/\alpha}\right)\right)\\
&\quad \le \dfrac{\alpha}{n-1} \sum_{k=1}^\infty k(k+\lfloor  e \sqrt{qn/\alpha}\rfloor){\mathbb{P}}_{\alpha}\left(\lambda_1=k+\lfloor  e \sqrt{qn/\alpha}\rfloor+1\right)\\
&\quad \le \dfrac{\alpha^2}{2\pi(n-1)} \sum_{k=1}^\infty \dfrac{k\left(k+\lfloor  e \sqrt{qn/\alpha}\rfloor\right)}{q^{k+\lfloor  e \sqrt{qn/\alpha}\rfloor+1}}\\
&\quad= \dfrac{\alpha^2\left(\lfloor  e \sqrt{qn/\alpha}\rfloor(q-1)+q+1\right)}{2\pi(n-1)(q-1)^3q^{\lfloor  e \sqrt{qn/\alpha}\rfloor}} \\
&\quad \le \dfrac{\alpha^2 q\left( e \sqrt{qn/\alpha}(q-1)+q+1\right)}{2\pi(n-1)(q-1)^3q^{ e \sqrt{qn/\alpha}}}.
\end{split}
\end{equation} 
Applying  \eqref{Fu90x} again, we have
\begin{equation}\label{Fu90xx}
{\mathbb{P}}_{\alpha}(\lambda_{1}^{'}=k+1)=
{\mathbb{P}}_{1/\alpha}(\lambda_{1}=k+1)\le \dfrac{1}{2\pi \alpha q^{k+1}}
\end{equation}
for all $k\ge
e\sqrt{q\alpha n}$. 
By using \eqref{Fu90xx} and noting that
\[\eta_{n,\alpha}\ge - \dfrac{\sqrt{2}(\lambda_{1}^{'}-1)}{\sqrt{\alpha n(n-1)}},\]
we have 
\begin{equation}\label{F97y}
\begin{split} 
&{\mathbb{P}}_{\alpha}\left(-\eta_{n,\alpha }^{*}> \dfrac{e\sqrt{2q}}{\sqrt{n-1}}\right)\\
& = \dfrac{n}{2} {\mathbb{E}}\left(-\eta_{n,\alpha}\left(-\eta_{n,\alpha}-\dfrac{e\sqrt{2q}}{n-1} \right){\textbf{1}}
\left(-\eta_{n,\alpha}>\dfrac{e\sqrt{2q}}{\sqrt{n-1}}\right)\right]\\
& \le \dfrac{1}{\alpha(n-1)} {\mathbb{E}}\left((\lambda_{1}^{'}-1)(\lambda_{1}^{'}-1-e\sqrt{q\alpha n}){\textbf{1}}
\left(\lambda_{1}^{'}-1>e\sqrt{q\alpha n }\right)\right)\\
& \le \dfrac{1}{\alpha(n-1)}\sum_{k=1}^\infty k(k+\lfloor e\sqrt{q\alpha n }\rfloor){\mathbb{P}}_{\alpha}\left(\lambda_{1}^{'}=k+\lfloor e\sqrt{q\alpha n }\rfloor+1\right)\\
& \le \dfrac{1}{2\pi\alpha^2(n-1)}\sum_{k=1}^\infty \dfrac{k\left(k+\lfloor e\sqrt{q\alpha n }\rfloor\right)}{q^{k+\lfloor e\sqrt{q\alpha n }\rfloor+1}}\\
&=\dfrac{\lfloor e\sqrt{q\alpha n }\rfloor(q-1)+q+1}{2\pi\alpha^2(n-1)(q-1)^3 q^{\lfloor e\sqrt{q\alpha n }\rfloor}}\\
& \le \dfrac{q\left(e\sqrt{q\alpha n }(q-1)+q+1\right)}{2\pi\alpha^2(n-1)(q-1)^3 q^{ e\sqrt{q\alpha n }}}.
\end{split}
\end{equation}
For $\alpha\ge 1$, \eqref{F97x} and \eqref{F97y} reduce to
\begin{equation}\label{F97z}
\begin{split}
{\mathbb{P}}_{\alpha}\left(\left|\eta_{n,\alpha}^{*}\right|>\dfrac{e\sqrt{2q}}{\sqrt{n-1}}\right)
&\le \dfrac{\alpha^2 q\left( e \sqrt{qn/\alpha}(q-1)+q+1\right)}{\pi(n-1)(q-1)^3q^{ e \sqrt{qn/\alpha}}}.
\end{split}
\end{equation} 
The conclusion of the lemma follows from \eqref{a70} and \eqref{F97z}. The proof of the lemma is completed.
\end{proof}

\begin{proof}[Proof of \propref{Jack}]
It suffices to consider $x\ge0$ since we can simply apply the result to $-W_{n,\alpha}$ when $x<0$.
For a random variable $W$ with ${\mathbb{E}}W=0$ and ${\mathrm{Var}}(W)=1$, 
\cite{ChenShao01} proved that
\begin{equation}\label{c02}
\sup_{x\ge0}|{\mathbb{P}}(W\le x)-\Phi(x)|\le
\sup_{x\ge0}\Big|\dfrac{1}{1+x^2}-(1-\Phi(x))\Big|\le 0.55.
\end{equation}

Firstly, we prove \eqref{J11}. From \eqref{c02}, it suffices to prove the proposition for $n\ge 200$. 
Let 
\begin{equation}\label{c01}
K\ge e^{1/4},\ q=K^2\max\left\{1,\dfrac{\alpha\log^2n}{n}\right\},\ \varepsilon=\dfrac{2e\sqrt{2q}}{\sqrt{n-1}}.
\end{equation}
Since $q\ge e^{1/2}$, it is clear that for $x>2$, $x/q^x$ is decreasing in $x$. Therefore, 
by applying \lemref{Ful13} with noting that $qn/\alpha\ge K^2\log^2n$, we have
\begin{equation}\label{b01}
\begin{split}
{\mathbb{P}}_{\alpha}\left(|T_{n,\alpha}|> \dfrac{2e\sqrt{2q}}{\sqrt{n-1}}\right)&\le
\dfrac{qn}{\pi (q-1)q^{Ke\log n} K^2\log^2 n}\\
&\quad+\dfrac{n^2 q^3\left(e (q-1)K\log n+q+1\right)}{\pi (n-1)(q-1)^3q^{e K\log n}K^2\log^4n}:=f(q).
\end{split}
\end{equation}
Since $e K\log n>5$, $f(q)$ is decreasing on $(1,\infty)$. Therefore
\begin{equation}\label{b03}
\begin{split}
f(q)\le f(K^2)&=\dfrac{K^2n}{\pi (K^2-1)K^{2eK\log n}K^2\log^2 n}\\
&+\dfrac{n^2 K^6 \left(e(K^2-1)K\log n+K^2+1\right)}{\pi(n-1)(K^2-1)^3K^{2eK\log n}K^4\log^4n}.
\end{split}
\end{equation}
By choosing $K=e^{1/4}$ and noting that $n>200$, we have
\begin{equation}\label{c03}
\begin{split}
{\mathbb{P}}_{\alpha}\left(|T_{n,\alpha}|>\varepsilon\right)
&\le f(e^{1/2})\\
&=\dfrac{\sqrt{e} n}{\pi\sqrt{e} (\sqrt{e}-1)n^{0.5e^{5/4}}\log^2 n}\\
&\quad+\dfrac{n^2 e^{3/2} \left(e^{5/4}(\sqrt{e}-1)\log n+\sqrt{e}+1\right)}{\pi(n-1)(\sqrt{e}-1)^3n^{0.5e^{5/4}}\log^4n}\\
&\le \dfrac{0.05}{\sqrt{n}},
\end{split}
\end{equation}
and
\begin{equation}\label{c05}
\begin{split}
\dfrac{\varepsilon }{\sqrt{2\pi}}
&= \dfrac{2e^{5/4}\sqrt{n}}{\sqrt{\pi(n-1)}}\max\left\{\dfrac{1}{\sqrt{n}},\dfrac{\sqrt{\alpha}\log n}{n}\right\}\\
&\le 3.95 \max\left\{\dfrac{1}{\sqrt{n}},\dfrac{\sqrt{\alpha}\log n}{n}\right\}.
\end{split}
\end{equation}
By \eqref{F11} and \eqref{F15}, we have
\begin{equation}\label{F19}
\begin{split}
\sqrt{{\mathbb{E}}T_{n,\alpha}^2}&=\sqrt{{\mathbb{E}}(\eta_{n,\alpha}^{*}-\eta_{n,\alpha})^2}\\
&\le \sqrt{{\mathbb{E}}(\eta_{n,\alpha}^{*})^2}+\sqrt{{\mathbb{E}}(\eta_{n,\alpha})^2}\\
&\le \left(\dfrac{4}{3n}+\dfrac{2 \alpha}{3n(n-1)} \right)^{1/2}+\left(\dfrac{2}{n}\right)^{1/2}\\
&\le \left(\left(\dfrac{4}{3}+\dfrac{2n}{3(n-1) \log^2 n} \right)^{1/2} +\sqrt{2} \right)\max\left\{\dfrac{1}{\sqrt{n}},\dfrac{\sqrt{\alpha}\log n}{n}\right\}.
\end{split}
\end{equation}
Since $n> 200$, it follows from \eqref{F19} that
\begin{equation}\label{b05}
\begin{split}
\left(1+\dfrac{\sqrt{2\pi}}{4}\right)\sqrt{ {\mathbb{E}}T_{n,\alpha}^2}&\le 
4.2\max\left\{\dfrac{1}{\sqrt{n}},\dfrac{\sqrt{\alpha}\log n}{n}\right\}.
\end{split}
\end{equation}
Apply \thmref{nonstar} (i), \eqref{J11} follows from \eqref{c03}, \eqref{c05} and \eqref{b05}.

Now we prove \eqref{J13}. If either $\delta\ge 1$ or $0<\delta<1$ and $n\le 200$,
then \eqref{J13} holds by \eqref{c02}. Therefore we may assume that $0<\delta<1$ and $n> 200$. Let 
\begin{equation}\label{b12}
0<L\le 1,\ q=\dfrac{\alpha}{(L\delta)^2 n},\ \varepsilon'=\dfrac{2e\sqrt{2q}}{\sqrt{n-1}}.
\end{equation}
Since $n>200$ and $0<\delta<1$, elementary calculus shows that 
\[q\ge \dfrac{n^\delta}{\delta^2}>51.\]
By applying \lemref{Ful13}, we have
\begin{equation}\label{b13}
\begin{split}
&{\mathbb{P}}_{\alpha}\left(|T_{n,\alpha}|>\varepsilon' \right)\le
\dfrac{\alpha}{\pi(q-1)q^{e/(L\delta)}}+
\dfrac{(L\delta)^2 q^2 n \alpha\left(e(q-1)/(L\delta)+q+1\right)}{\pi (n-1)(q-1)^3q^{e/(L\delta)}}.
\end{split}
\end{equation}
By choosing $L=1$ and noting $n>200,q>51$, we have from \eqref{b13} that
\begin{equation}\label{b14}
\begin{split}
{\mathbb{P}}_{\alpha}\left(|T_{n,\alpha}|>\varepsilon' \right)
&\le \dfrac{0.08\sqrt{\alpha }}{n},
\end{split}
\end{equation}
and
\begin{equation}\label{b16}
\begin{split}
\dfrac{\varepsilon'}{\sqrt{2\pi}}&= \dfrac{2\sqrt{n} e  }{\sqrt{\pi(n-1)} \delta}\dfrac{\sqrt{\alpha}}{n}
\le \dfrac{3.1}{\delta}\dfrac{\sqrt{\alpha}}{n}.
\end{split}
\end{equation}
Using the second inequality in \eqref{F19} and noting again that $\alpha>n>200$, we also have
\begin{equation}\label{b15}
\begin{split}
\sqrt{{\mathbb{E}}T_{n,\alpha}^2}
&\le \left(\dfrac{4}{3n}+\dfrac{2 \alpha}{3n(n-1)} \right)^{1/2}+\left(\dfrac{2}{n}\right)^{1/2}\\
&\le \left(\sqrt{\dfrac{4}{3}+\dfrac{2\times 201}{3\times 200}}+\sqrt{2}\right) \dfrac{\sqrt{\alpha}}{n}.
\end{split}
\end{equation}
It follows from \eqref{b15} that
\begin{equation}\label{b18}
\begin{split}
\left(1+\dfrac{\sqrt{2\pi}}{4}\right)\sqrt{ {\mathbb{E}}T_{n,\alpha}^2}&\le 
4.62\dfrac{\sqrt{\alpha}}{n}.
\end{split}
\end{equation}
Apply \thmref{nonstar} (i) with $\varepsilon'$ plays the role of $\varepsilon$ in \thmref{nonstar} (i), \eqref{J13} follows from \eqref{b14}, \eqref{b16} and \eqref{b18}.
\end{proof}

The following proposition establishes non-uniform bounds on the Kolmogorov distance for Jack measures.
\begin{prop}\label{nonuni-Jack}
Let $n\ge 3$ be an integer. Let $p\ge 2$, $1\le \alpha<n^{2}$ and $W_{n,\alpha}$ be as in \eqref{J00}.
Then for all  $x\in {\mathbb{R}}$, we have
	\begin{equation}\label{n02}
	\left|{\mathbb{P}}_{\alpha}(W_{n,\alpha}\le
	x)-\Phi(x)\right|\le \dfrac{C_p}{1+|x|^p}\left(\dfrac{p^2}{\log p} \right)^p\max\left\{\dfrac{1}{\sqrt{n}},\dfrac{\sqrt{\alpha}\log n}{n} \right\}.
	\end{equation}
	
If, in addition, there exist $\delta:=\delta(\alpha,n)>0$ such that $n^{1+\delta}\le \alpha< n^2$, then for all  $x\in {\mathbb{R}}$, we have
\begin{equation}\label{n03}|{\mathbb{P}}_{\alpha}(W_{n,\alpha}\le
x)-\Phi(x)|\le \left(1+\dfrac{1}{\delta^{p+1}}\right)\dfrac{C_p}{1+|x|^p}\left(\dfrac{p^2}{\log p} \right)^p \dfrac{\sqrt{\alpha}}{n}.
\end{equation}
\end{prop}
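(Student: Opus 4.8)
The plan is to apply \thmref{nonstar}(ii) to $W=W_{n,\alpha}$ --- which has mean $0$ and variance $1$ by \eqref{F11z} --- with the zero-bias coupling $W^{*}=W^{*}_{n,\alpha}=V_{n,\alpha}+\eta^{*}_{n,\alpha}$ of \eqref{FG9} and $T=T_{n,\alpha}=\eta_{n,\alpha}-\eta^{*}_{n,\alpha}$. Write $M=\max\{n^{-1/2},\,\sqrt\alpha\,\log n/n\}$ and $N=\max\{n^{-1/2},\,\sqrt\alpha/n\}$, so that $N\le M$ and, because $\alpha<n^2$, $N<1$. Since $(p^2/\log p)^p=p^{p}(p/\log p)^p$, \eqref{nou01e} shows that to obtain \eqref{n02} it suffices to produce $\varepsilon>0$ for which each of
\[
\sqrt{{\mathbb{E}}T_{n,\alpha}^2},\quad \sqrt{{\mathbb{E}}|T_{n,\alpha}|^{2p+2}},\quad \varepsilon,\quad \sqrt{{\mathbb{P}}_{\alpha}(|T_{n,\alpha}|>\varepsilon)}
\]
is at most $C_{p}\,p^{p}M$; for \eqref{n03} the targets become $C_{p}\,p^{p}(1+\delta^{-p-1})\sqrt\alpha/n$.

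For \eqref{n02} take $\varepsilon=2e\sqrt{2q}/\sqrt{n-1}$ with $q=K^2\max\{1,\alpha\log^2n/n\}$ and $K=e^{1/4}$, exactly as in \propref{Jack}. Three of the four quantities are then immediate: $\sqrt{{\mathbb{E}}T_{n,\alpha}^2}\le CM$ by \eqref{F19}; $\varepsilon\le CM$ by the choice of $q$; and, repeating the estimate \eqref{b01}--\eqref{c03}, \lemref{Ful13} together with $q\ge K^2e^{1/2}$ and $\alpha<n^2$ shows ${\mathbb{P}}_{\alpha}(|T_{n,\alpha}|>\varepsilon)$ is super-polynomially small in $n$, whence $\sqrt{{\mathbb{P}}_{\alpha}(|T_{n,\alpha}|>\varepsilon)}\le CM$ (for the finitely many $n$ below an absolute threshold one uses the trivial bound $\le1$, which is $\le\sqrt n\,M$). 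As $M\le p^pM$, all three are of the required size.

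The main point is the fourth quantity, a Rosenthal-type moment bound for the coupling difference:
\begin{equation*}
{\mathbb{E}}|T_{n,\alpha}|^{2p+2}\le C_{p}\,p^{2p}M^2 .
\end{equation*}
I would isolate this as a lemma and prove it from $|T_{n,\alpha}|\le|\eta_{n,\alpha}|+|\eta^{*}_{n,\alpha}|$ together with the estimate
\begin{equation*}
\|\eta_{n,\alpha}\|_{m}+\|\eta^{*}_{n,\alpha}\|_{m}\ \le\ C\,m\,N\qquad(m\ge 2),
\end{equation*}
with $C$ absolute. To get the latter, integrate the tail bounds already produced inside the proof of \lemref{Ful13} --- \eqref{a70} for $\eta_{n,\alpha}$ and \eqref{F97z} for $\eta^{*}_{n,\alpha}$, each with a free parameter $q$, equivalently at scale $t=e\sqrt{2q}/\sqrt{n-1}$: substituting $q=(n-1)t^2/(2e^2)$ both bounds take the form $\mathrm{poly}(n,\alpha,t)\,q^{-e\sqrt{qn/\alpha}}$, which for $\alpha<n^2$ decays faster than every power of $t$ once $t$ exceeds a constant multiple of $N$ (below that scale one keeps the trivial bound $\le1$); integrating $m\,t^{m-1}$ against this tail then gives $\|\eta_{n,\alpha}\|_m,\|\eta^{*}_{n,\alpha}\|_m\le CmN$, the explicit formulas \eqref{F11}, \eqref{F13}, \eqref{F15} disposing of (and checking) the small values of $m$. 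Consequently ${\mathbb{E}}|T_{n,\alpha}|^{2p+2}=\|T_{n,\alpha}\|_{2p+2}^{2p+2}\le\bigl(C(2p+2)N\bigr)^{2p+2}\le C_p\,p^{2p+2}N^{2p+2}$, and since $N<1$ and $N\le M$ one has $N^{2p+2}=N^2N^{2p}\le M^2$, so this is $\le C_p\,p^{2p+2}M^2\le C_p'\,p^{2p}M^2$. I expect this moment estimate to be the principal obstacle: one must control the polynomial prefactors in \eqref{a70}--\eqref{F97z} against the super-exponential decay uniformly over $p\ge2$ and $1\le\alpha<n^2$, and in particular cope with the regime $\alpha$ close to $n^2$, where the transpose of \lemref{Fu92} is no longer usable, $\lambda_1'$ is controlled only by the trivial bound $\lambda_1'\le n$, and the deterministic bound $|\eta_{n,\alpha}|\le\sqrt{2/(\alpha n(n-1))}\max\{\alpha(\lambda_1-1),\lambda_1'-1\}$ becomes very crude.

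Inserting the four bounds into \eqref{nou01e} gives \eqref{n02}. For \eqref{n03} restrict to $n^{1+\delta}\le\alpha<n^2$,
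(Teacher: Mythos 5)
Your plan correctly identifies the four quantities in \eqref{nou01e} that must be controlled, and the reduction of $\alpha<n^2$ to $\alpha<n^{5/4}$ via \eqref{n03} is also what the paper does. But two of your estimates, as sketched, do not go through, and the difficulty is concentrated precisely where you yourself anticipate ``the principal obstacle.''

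First, you take $K=e^{1/4}$ ``exactly as in \propref{Jack}.'' The paper, crucially, does \emph{not} reuse $K=e^{1/4}$ here: it takes $K=p+2$ in \eqref{c01}, so that $\varepsilon\asymp(p+2)M$ and, more importantly, $\sqrt{qn/\alpha}\ge(p+2)\log n$, making $q^{-e\sqrt{qn/\alpha}}\le n^{-2e(p+2)}$. With that super-polynomial rate one has the single, clean truncation estimate \eqref{c26}: $\mathbb{E}|T_{n,\alpha}|^{2p+2}\le\varepsilon^{2p+2}+(8\alpha)^{p+1}\mathbb{P}_\alpha(|T_{n,\alpha}|>\varepsilon)$, and the second term is dominated by the first because $\alpha^{p+1}\le n^{5(p+1)/4}$ while the tail carries a factor $n^{1-2e(p+2)}$. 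With $K=e^{1/4}$ the tail probability is only $O(n^{-e^{5/4}/2+1})\approx O(n^{-3/4})$ multiplied by a positive power of $\alpha$; already at $\alpha\approx n^{5/4}$ this is $\approx n^{-1/2}$, not super-polynomially small, and $(8\alpha)^{p+1}\mathbb{P}_\alpha(|T_{n,\alpha}|>\varepsilon)$ then blows up in $n$ for $p$ large. So the claim in your second paragraph that $\mathbb{P}_\alpha(|T_{n,\alpha}|>\varepsilon)$ is ``super-polynomially small'' under the choice $K=e^{1/4}$ is incorrect, and the simple truncation bound you would need for $\mathbb{E}|T_{n,\alpha}|^{2p+2}$ is unavailable.

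Second, the moment estimate you propose in place of \eqref{c26} --- $\|\eta_{n,\alpha}\|_m+\|\eta^*_{n,\alpha}\|_m\le C\,m\,N$ with $N=\max\{n^{-1/2},\sqrt\alpha/n\}$ --- does not follow from integrating the tail bounds \eqref{a70} and \eqref{F97z} ``once $t$ exceeds a constant multiple of $N$.'' Substituting $q=(n-1)t^2/(2e^2)$ and $t=cN$ for a fixed $c$, one finds $\sqrt{qn/\alpha}\approx c/(e\sqrt2)$: a \emph{constant}, independent of $n$ and $\alpha$. The exponential factor $q^{-e\sqrt{qn/\alpha}}$ is then at best a fixed power of $q$, while the polynomial prefactor in \eqref{a70} and \eqref{F97z} is of order $\alpha$ or $\alpha^2$, which for $\alpha\ge n$ can be polynomially large in $n$. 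Consequently, for $\alpha\approx n$ the bound \eqref{a70} only becomes nontrivial when $t\gtrsim N\cdot\log n/\log\log n$, not at $t\asymp N$. The $\log n$ sitting inside $M$ is not a cosmetic feature; it is exactly what makes $\sqrt{qn/\alpha}\gtrsim\log n$, so that $q^{-e\sqrt{qn/\alpha}}$ can overwhelm the prefactor. Your proposed lemma would need to be weakened to something like $\|\eta_{n,\alpha}\|_m\lesssim mN+M$, and even then the integration argument would have to be carried out with that corrected threshold. (The resulting weaker moment bound is still enough for \eqref{nou01e}, since $M<1$ on the range $\alpha<n^{5/4}$ and $(mN+M)^{2p+2}\le C_p p^{2p}M^2$, so the approach is repairable --- but the claim as stated is wrong and the sketch does not deliver it.)

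Finally, the proposal breaks off before addressing \eqref{n03}. The paper proves that part by a separate lemma (\lemref{g}) giving $\mathbb{E}|T_{n,\alpha}|^p\le C_p p^p\delta^{-p}(\sqrt\alpha/n)^p$ for $\alpha\ge n^{1+\delta}$, based on the choice $L=1/(p+2)$ in \eqref{b12}; this is where the $\delta^{-(p+1)}$ in \eqref{n03} comes from, and it is not a routine variation of the proof of \eqref{n02}.
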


\begin{proof}

We observe that if $\alpha>n^{1+\delta},\ 1/4\le\delta<1$, then \eqref{n03} implies \eqref{n02} 
(the value $1/4$ is choosen for convenience only).
Therefore, we only need to prove \eqref{n02} for the case where $1\le \alpha<n^{5/4}$.
For $n\ge 3$ and
$1\le \alpha< n^{5/4}$, we have
\begin{equation}\label{c21}
\begin{split}
\max\left\{\dfrac{1}{\sqrt{n}},\dfrac{\sqrt{\alpha}\log n}{n}\right\}<1.
\end{split}
\end{equation}
Let $K=p+2$ and let $q,\varepsilon$ be as in \eqref{c01}. 
Then
\begin{equation}\label{c22}
\begin{split}
\varepsilon & \le 20(p+2) \max\left\{\dfrac{1}{\sqrt{n}},\dfrac{\sqrt{\alpha}\log n}{n}\right\}.
\end{split}
\end{equation}
From \eqref{b03}, we have
\begin{equation}\label{c23}
\begin{split}
{\mathbb{P}}_{\alpha}\left(|T_{n,\alpha}|>\varepsilon\right)
&\le \dfrac{(p+2)^2n}{\pi ((p+2)^2-1)n^{2e(p+2)}(p+2)^2\log^2 n}\\
&\quad+\dfrac{n^2 (p+2)^6 \left(e(p+2)((p+2)^2-1)\log n+(p+2)^2+1\right)}{\pi(n-1)((p+2)^2-1)^3n^{2e(p+2)}(p+2)^4\log^4n}\\
&\le \dfrac{C_p}{n^{2e(p+2)-1}}.
\end{split}
\end{equation}
To apply \thmref{nonstar} (ii), we also need to bound ${\mathbb{E}}|T_{n,\alpha}|^{2p+2}$.
Since $|X_{n,\alpha}|\le \alpha(n-1)$, we have $|\eta_{n,\alpha}|\le \sqrt{2\alpha}$ and 
therefore $|\eta_{n,\alpha}^{*}|\le \sqrt{2\alpha}$ (see (2.58) in \cite{CGS}).
Combining \eqref{c21} - \eqref{c23}, we have
\begin{equation}\label{c26}
\begin{split}
{\mathbb{E}}\left(|T_{n,\alpha}|^{2p+2}\right)
&\le \varepsilon^{2p+2}+ \left(8\alpha\right)^{p+1}{\mathbb{P}}\left(|T_{n,\alpha}|>\varepsilon\right)\\
&\le \varepsilon^{2p+2}+\dfrac{C_p \alpha^{p+1}}{n^{2e(p+2)-1}}\\
&\le C_p p^{2p}\left(\max\left\{\dfrac{1}{\sqrt{n}},\dfrac{\sqrt{\alpha}\log n}{n}\right\}\right)^{2p+2}\\
&\le C_p p^{2p}\left(\max\left\{\dfrac{1}{\sqrt{n}},\dfrac{\sqrt{\alpha}\log n}{n}\right\}\right)^{2}.
\end{split}
\end{equation}
Apply \thmref{nonstar} (ii), \eqref{n02} follows from \eqref{F19} and \eqref{c22}-\eqref{c26}.

To prove \eqref{n03}, we will need the following lemma.
\begin{lem}\label{g}
If there exist $\delta>0$ such that $\alpha\ge n^{1+\delta}$, then for all $p\ge0$, we have
\begin{equation}\label{g01}
\begin{split}
{\mathbb{E}}\left(|T_{n,\alpha}|^{p}\right)
&\le \dfrac{C_p p^{p}}{\delta^{p}} \left(\dfrac{\sqrt{\alpha}}{n}\right)^p.
\end{split}
\end{equation}
\end{lem}
\begin{proof}[Proof of \lemref{g}]
Let $L=1/(p+2)$ and let $q,\varepsilon'$ be as in \eqref{b12}. 
Then
\begin{equation}\label{d22}
\begin{split}
q\ge(p+2)^2\dfrac{n^\delta}{\delta^2}>8 \text{ and } \varepsilon' & \le  \dfrac{10(p+2)\sqrt{\alpha}}{\delta n}.
\end{split}
\end{equation}
From \eqref{b13}, we have
\begin{equation}\label{d23}
\begin{split}
{\mathbb{P}}_{\alpha}\left(|T_{n,\alpha}|>\varepsilon'\right)
&\le \dfrac{q}{\pi (q-1)n^{e(p+2)-1}}+\dfrac{eq^2(q-1)+q^2(q+1)}{\pi (q-1)^3(n-1)}\dfrac{\alpha}{n^{e(p+2)-1}}\\
&\le \dfrac{\alpha}{n^{2p+4}}.
\end{split}
\end{equation}
Similar to \eqref{c26}, \eqref{d23} yields
\begin{equation}\label{d26}
\begin{split}
{\mathbb{E}}\left(|T_{n,\alpha}|^{p}\right)
&\le (\varepsilon')^{p}+ \left(2\sqrt{2\alpha}\right)^{p}{\mathbb{P}}\left(|T_{n,\alpha}|>\varepsilon'\right)\\
&\le (\varepsilon')^{p}+\dfrac{C_p \alpha^{p/2+1}}{n^{2p+4}}\\
&\le \dfrac{C_p p^{p}}{\delta^{p}} \left(\dfrac{\sqrt{\alpha}}{n}\right)^p.
\end{split}
\end{equation}
The proof of \lemref{g} is completed.
\end{proof}
Now, we will prove \eqref{n03}. Let $L,\ q,\ \varepsilon'$ be as in the proof of \lemref{g}. From
\lemref{g}, we have
\begin{equation}\label{d27}
\begin{split}
{\mathbb{E}}\left(|T_{n,\alpha}|^{2p+2}\right)
&\le \dfrac{C_p p^{2p}}{\delta^{2p+2}} \left(\dfrac{\sqrt{\alpha}}{n}\right)^{2p+2}\le \dfrac{C_p p^{2p}}{\delta^{2p+2}} \left(\dfrac{\sqrt{\alpha}}{n}\right)^{2}.
\end{split}
\end{equation}
Apply \thmref{nonstar} (ii) with $\varepsilon'$ plays the role of $\varepsilon$ in \thmref{nonstar} (ii), \eqref{n03} follows from \eqref{F19}, \eqref{d23}, \eqref{d27} and the second half of \eqref{d22}.
\end{proof}
\begin{proof}[Proofs of \thmref{Ja1} and \thmref{Ja2}]
When $\alpha\ge 1$, \thmref{Ja1} is a direct consequence of \propref{Jack}. 
We also see that
\eqref{J11} holds if we replace $W_{n,\alpha}$ by $-W_{n,\alpha}$. To obtain \thmref{Ja1} for $0<\alpha<1$, we note
that from the definition of Jack measure, ${\mathbb{P}}_{\alpha}(\lambda)={\mathbb{P}}_{1/\alpha}(\lambda^t)$, where $\lambda^t$ is the transpose
 partition of $\lambda$. It also follows from \eqref{Fu01} and the definition of $\alpha$-content that
 $W_{n,\alpha}(\lambda)=-W_{n,1/\alpha}(\lambda^t)$. Therefore
\begin{align*}
{\mathbb{P}}_{\alpha}(W_{n,\alpha}=x)&={\mathbb{P}}_{\alpha}\left\{\lambda:W_{n,\alpha}(\lambda)=x\right\}\\
&={\mathbb{P}}_{1/\alpha}\left\{\lambda^t:W_{n,1/\alpha}(\lambda^t)=-x\right\}\\
&={\mathbb{P}}_{1/\alpha}\left(W_{n,1/\alpha}=-x\right).
\end{align*}
From this we
conclude that
${\mathbb{P}}_{\alpha}\left(W_{n,\alpha}\le x\right)={\mathbb{P}}_{1/\alpha}\left(W_{n,1/\alpha}\ge -x\right)$.
Therefore,
\begin{align*}
\sup_{x\in \mathbb{R}}|{\mathbb{P}}_{\alpha}(W_{n,\alpha}\le  x)-\Phi(x)|&=\sup_{x\in \mathbb{R}}\left|{\mathbb{P}}_{1/\alpha}\left(W_{n,1/\alpha}\ge  -x\right)-\Phi(x)\right|\\
&=\sup_{x\in \mathbb{R}}\left|{\mathbb{P}}_{1/\alpha}\left(-W_{n,1/\alpha}\le  x\right)-\Phi(x)\right|\\
&\le 8.2\max\left\{\dfrac{1}{\sqrt{n}},\dfrac{\log n}{\sqrt{\alpha}n}\right\}.
\end{align*}
Therefore, \thmref{Ja1} also holds when $0<\alpha<1$. This completes the proof of \thmref{Ja1}.

 When $1\le \alpha<n^2$, \thmref{Ja2} is a direct consequence of \propref{nonuni-Jack}. When $1/n^2<\alpha<1$, the proof is similar
 to that of \thmref{Ja1}, and this completes the proof of \thmref{Ja2}.
\end{proof}

\appendix
\section{}\label{sec:Appendix}
In this Section we will prove \eqref{rem_add_01} and two lemmas which are
used in Section \ref{sec:Rosenthal}.
\begin{proof}[Proof of \eqref{rem_add_01}]
For $p\ge 2$, applying \propref{ros} and \lemref{g} with noting that $\alpha\ge n^2$ (so that $\delta\ge 1$), we have

\begin{equation}\label{ap01}
\begin{split}
{\mathbb{E}}\left|W_{n,\alpha}\right|^p&\le \kappa_{p}\left(1+{\mathbb{E}}|T_{n,\alpha}|^{p-2}\right)\\
&\le \kappa_{p}\left(1+C_p p^p\left(\dfrac{\sqrt{\alpha}}{n}\right)^{p-2}\right)\\
&\le C_p \left(\dfrac{p^2}{\log p}\right)^p\left(\dfrac{\sqrt{\alpha}}{n}\right)^{p-2}
\end{split}
\end{equation}
establishing \eqref{rem_add_01}.
\end{proof}

\begin{lem}\label{lem_ap02}
	Let $p>8$ and let $\kappa_p$ be as in \propref{ros}, then
	\begin{equation}\label{a05_ap}
	\begin{split}
	\dfrac{\kappa_p}{\kappa_{p-2}}
	&\ge 8\left(\dfrac{p-1}{\log (p-1)}\right)^2.
	\end{split}
	\end{equation}
\end{lem}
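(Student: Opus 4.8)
The plan is to take logarithms and reduce \eqref{a05_ap} to a one–dimensional inequality between explicit functions. Since $\kappa_p=\dfrac{(\log 8)^3}{196}\left(\dfrac{7p}{4\log p}\right)^p$, we have $\dfrac{\kappa_p}{\kappa_{p-2}}=\left(\dfrac74\right)^2\dfrac{p^p\,(\log(p-2))^{p-2}}{(p-2)^{p-2}(\log p)^p}$, so \eqref{a05_ap} is equivalent to
\[
A(p)-B(p)\ \ge\ \log\dfrac{128}{49},\qquad p>8,
\]
where $A(p)=p\log p-(p-2)\log(p-2)-2\log(p-1)$ and $B(p)=p\log\log p-(p-2)\log\log(p-2)-2\log\log(p-1)$. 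Heuristically $A(p)\to2$ while $B(p)$ stays below $2/\log(p-1)<2/\log 7\approx1.03$, so $A(p)-B(p)$ should exceed roughly $0.97>\log\frac{128}{49}\approx0.96$; the catch is that this margin is small, so the two estimates below must be carried out carefully.

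For the lower bound on $A$, I would set $u=p-1>7$ and rewrite $A(p)=(u+1)\log\!\bigl(1+\tfrac1u\bigr)-(u-1)\log\!\bigl(1-\tfrac1u\bigr)$. Inserting the elementary fourth–order bounds $\log(1+x)\ge x-\tfrac{x^2}{2}+\tfrac{x^3}{3}-\tfrac{x^4}{4}$ and $-\log(1-x)\ge x+\tfrac{x^2}{2}+\tfrac{x^3}{3}+\tfrac{x^4}{4}$, valid for $0\le x<1$, with $x=1/u$, the odd–order contributions cancel between the two pieces and one obtains $A(p)\ge 2-\dfrac{1}{3(p-1)^2}-\dfrac{1}{2(p-1)^4}$. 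Retaining the coefficient $\tfrac13$ (rather than a cruder $1$) is important in view of the small margin.

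For the upper bound on $B$, let $G(t)=t\log\log t$, so that $B(p)=\bigl(G(p)-G(p-2)\bigr)-2\log\log(p-1)$ and $2\log\log(p-1)=2G'(p-1)-\dfrac{2}{\log(p-1)}$; hence it suffices to prove $G(p)-G(p-2)=\int_{p-2}^{p}G'(t)\,dt\le 2G'(p-1)$, i.e.\ that the midpoint rule overestimates $\int_{p-2}^p G'$, which holds once $G'$ is concave on $[p-2,p]$ (the tangent to $G'$ at the midpoint lies above $G'$). A direct computation gives $G''(t)=\dfrac{\log t-1}{t(\log t)^2}$ and $G'''(t)=\dfrac{2-(\log t)^2}{t^2(\log t)^3}<0$ for $t\ge6$ (since $\log 6>\sqrt2$); since $p>8$ forces $[p-2,p]\subseteq[6,\infty)$, $G''$ is decreasing and $G'$ concave there, yielding $B(p)\le\dfrac{2}{\log(p-1)}$.

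Combining, $A(p)-B(p)\ge\Psi(p-1)$ with $\Psi(u)=2-\dfrac{1}{3u^2}-\dfrac{1}{2u^4}-\dfrac{2}{\log u}$, and $\Psi$ is increasing, so $A(p)-B(p)\ge\Psi(7)=2-\dfrac{1}{147}-\dfrac{1}{4802}-\dfrac{2}{\log 7}$. A numerical evaluation gives $\Psi(7)\approx0.9652$, which exceeds $\log\dfrac{128}{49}=7\log 2-2\log 7\approx0.9602$, and this proves \eqref{a05_ap}. The main obstacle is precisely this final comparison: the slack is only about $5\times10^{-3}$, so both the fourth–order Taylor estimate for $A$ and the sharp, concavity–based bound $2/\log(p-1)$ for $B$ are genuinely needed — a more wasteful bound such as $A(p)\ge 2-1/(p-1)^2$ or $B(p)\le 2/\log(p-2)$ would already fail near $p=8$.
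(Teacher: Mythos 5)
Your proof is correct, and it takes a genuinely different route from the paper's. The paper also works with $h(p)=\log\kappa_p$ and writes $h(p)-h(p-2)=\int_{p-2}^p h'(t)\,dt$, but then bounds the three resulting integrals separately: concavity of $\log\log t$ gives $\int_{p-2}^p\log\log t\,dt\le 2\log\log(p-1)$, monotonicity of $1/\log t$ gives the crude constant bound $\int_{p-2}^p \frac{dt}{\log t}\le\int_6^8\frac{dt}{\log t}$, and the substitution $t=p-1+s$ together with $p-1>7$ handles $\int_{p-2}^p\log t\,dt$; the argument ends with a single numerical check on the resulting constants. Your version instead separates into $A(p)-B(p)\ge\log\frac{128}{49}$, bounds $A$ via a fourth-order alternating-series truncation of $\log(1\pm 1/u)$ at $u=p-1$, and — this is the main structural difference — bounds $B$ by bundling the $\log\log t$ and $1/\log t$ contributions into the single function $G'(t)=\log\log t+\tfrac{1}{\log t}$, whose concavity on $[6,\infty)$ (via $G'''<0$) yields $G(p)-G(p-2)\le 2G'(p-1)$ and hence $B(p)\le 2/\log(p-1)$. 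This combined concavity bound is sharper than the paper's pair of estimates: it produces a $p$-dependent $2/\log(p-1)$ rather than the fixed constant $\int_6^8 dt/\log t$, and a clean monotone function $\Psi(u)=2-\tfrac{1}{3u^2}-\tfrac{1}{2u^4}-\tfrac{2}{\log u}$ whose value at the boundary $u=7$ is the only numerical verification needed. Both arguments rely on a small final numerical margin near $p=8$; yours makes the margin slightly more transparent by isolating it into the single comparison $\Psi(7)\approx0.9652>0.9602\approx\log\frac{128}{49}$.
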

\begin{proof}
	Let 
	\[h(p)=\log (\kappa_p)=p\left(\log p-\log (\log p)+\log (7/4)\right),\]
	we have
	\begin{equation}\label{a01}
	\begin{split}
	&h(p)-h(p-2)=\int_{p-2}^p h'(t)\mathrm{d}t\\
	&=2+2\log (7/4)+\int_{p-2}^p \log t \mathrm{d}t-\int_{p-2}^p \log (\log t) \mathrm{d}t- \int_{p-2}^p\dfrac{\mathrm{d}t}{\log t}.
	\end{split}
	\end{equation}
	Since the function $t\to \log(\log t)$ is concave,
	\begin{equation}\label{a02}
	\int_{p-2}^p \log (\log t) \mathrm{d}t \le 2 \log(\log(p-1)).
	\end{equation}
	Next, since $p>8$, we have 
	\begin{equation}\label{a03}
	\int_{p-2}^p \dfrac{\mathrm{d}t}{\log t} \le  \int_{6}^{8} \dfrac{\mathrm{d}t}{\log t},
	\end{equation}
	and
	\begin{equation}\label{a04}
	\begin{split}
	\int_{p-2}^p \log t \mathrm{d}t&=\int_{-1}^1 \log (p-1+s) \mathrm{d}s\\
	&=2\log(p-1)+\int_{-1}^1\log\left(1+\dfrac{s}{p-1}\right) \mathrm{d}s\\
	&=2\log(p-1)+\int_{0}^1\log\left(1-\dfrac{s^2}{(p-1)^2}\right) \mathrm{d}s\\
	&\ge 2\log(p-1)+\int_{0}^1\log\left(1-\dfrac{s^2}{49}\right) \mathrm{d}s.
	\end{split}
	\end{equation}
	Combining \eqref{a01}-\eqref{a04}, numerical calculation gives
	\begin{equation*}
	\begin{split}
	h(p)-h(p-2)&\ge 2+2\log (7/4) +2\log (p-1)- 2\log (\log (p-1))\\
	&\quad\quad -\int_{6}^8\dfrac{\mathrm{d}t}{\log t}+\int_{0}^1\log\left(1-\dfrac{s^2}{49}\right)\mathrm{d}s\\
	&>2\log (p-1)- 2\log (\log (p-1))+\log 8
	\end{split}
	\end{equation*}
	for all $p>8$. This implies \eqref{a05_ap}.
\end{proof}

\begin{lem}\label{lem_ap01} Let $p>8$ and let $\kappa_p$ be as in \propref{ros}. Then
\begin{equation}\label{ap03}
\begin{split}
\dfrac{2(p-1)}{\left(1-\theta\right)^{p-3}}\le \kappa_{p},
\end{split}
\end{equation}
where
\[\theta=\theta(p):=\left(\dfrac{\log^2(p-1)}{4(p-1)}\right)^{1/(p-3)}.\]
\end{lem}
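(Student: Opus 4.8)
The plan is to reduce \eqref{ap03} to a one-variable inequality in $p$ and verify it. By the definition of $\theta$ we have $\theta^{p-3}=\dfrac{\log^2(p-1)}{4(p-1)}$, so, writing $A:=A(p):=\log\dfrac{4(p-1)}{\log^2(p-1)}$, we get $\log(1/\theta)=A/(p-3)$. Applying the elementary inequality $\log x\le x-1$ with $x=1/\theta$ gives $1-\theta\ge\theta\log(1/\theta)$, and since both sides are nonnegative,
\[
(1-\theta)^{p-3}\ \ge\ \theta^{p-3}\Bigl(\log\tfrac1\theta\Bigr)^{p-3}
=\frac{\log^2(p-1)}{4(p-1)}\left(\frac{A}{p-3}\right)^{p-3},
\]
so that $\dfrac{2(p-1)}{(1-\theta)^{p-3}}\le\dfrac{8(p-1)^2}{\log^2(p-1)}\bigl(\tfrac{p-3}{A}\bigr)^{p-3}$; thus it suffices to prove
\[
\frac{8(p-1)^2}{\log^2(p-1)}\left(\frac{p-3}{A}\right)^{p-3}\ \le\ \kappa_p=\frac{(\log 8)^3}{196}\left(\frac{7p}{4\log p}\right)^{p}.
\]

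Next I would use Bernoulli's inequality, $\dfrac{p^p}{(p-3)^{p-3}}=p^3\bigl(1+\tfrac{3}{p-3}\bigr)^{p-3}\ge 4p^3$, to see that the displayed bound follows from
\[
\left(\frac74\right)^{p}\left(\frac{A(p)}{\log p}\right)^{p-3}\ \ge\ \frac{392}{(\log 8)^3}\cdot\frac{(p-1)^2(\log p)^3}{p^3\log^2(p-1)}.
\]
The crucial point is that $A(p)>\tfrac47\log p$ for every $p>8$: since
\[
A(p)-\tfrac47\log p=\Bigl(\log 4+\log\tfrac{p-1}{p}\Bigr)-\Bigl(2\log\log(p-1)-\tfrac37\log p\Bigr),
\]
the first bracket is $\ge\log\tfrac72$ (because $p\ge 8$), while, as $\log(p-1)\le\log p$, the second is at most $\max_{s>0}\bigl(2\log s-\tfrac37 s\bigr)=2\log\tfrac{14}{3}-2$, which is strictly less than $\log\tfrac72$. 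Hence $\tfrac{7A(p)}{4\log p}>1$, so the left side above equals $\exp\!\bigl(p\log\tfrac{7A(p)}{4\log p}\bigr)$ times a factor of polynomial size in $p$; on taking logarithms the claim reduces to $p\log\tfrac{7A(p)}{4\log p}\ge B(p)$ for an explicit $B(p)$ of size $O(\log p)$, and the left side here is at least $p\log\!\bigl(1+\tfrac{c}{\log p}\bigr)$ for some $c>0$, which is of order $p/\log p$ and so eventually dominates.

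For $p$ beyond a moderate threshold this yields \eqref{ap03} outright, and the remaining bounded range of $p$ is disposed of by direct numerical evaluation of the one-variable inequality, exactly in the spirit of the ``numerical calculation'' steps used for \lemref{lem_ap02}. The main obstacle is precisely this last part: near $p=8$ the slack in \eqref{ap03} — the ratio $\kappa_p(1-\theta)^{p-3}\big/\bigl(2(p-1)\bigr)$ — though comfortably bigger than $1$, is far from its asymptotic size, so there the crude bound $A(p)>\tfrac47\log p$ is too lossy and one must retain the true value $A(p)\approx\log(p-1)$ when estimating $\bigl(A(p)/\log p\bigr)^{p-3}$.
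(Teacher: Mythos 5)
Your algebraic reduction --- via $1-\theta\ge\theta\log(1/\theta)$, the identity $\theta^{p-3}=\log^2(p-1)/(4(p-1))$, and Bernoulli's inequality --- is correct as far as it goes, and your verification that $A(p)>\tfrac{4}{7}\log p$ for $p>8$ is sound. But the argument does not close. The only quantitative use you make of $A(p)>\tfrac47\log p$ is
$(7/4)^p\bigl(A(p)/\log p\bigr)^{p-3}\ge(7/4)^p(4/7)^{p-3}=(7/4)^3\approx 5.4$,
which at $p=8$ falls short of the needed right-hand side, $\tfrac{392}{(\log 8)^3}\cdot\tfrac{(p-1)^2\log^3 p}{p^3\log^2(p-1)}\approx 9.9$. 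You acknowledge this yourself, but the appeal to ``direct numerical evaluation'' of a bounded range, with no explicit threshold past which your asymptotic estimate suffices and with no finite-range check actually supplied, is a genuine gap: as written the proposal establishes \eqref{ap03} only for $p$ large, leaving a neighbourhood of $p=8$ unresolved.

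For comparison, the paper closes for all $p>8$ at once, by design. Instead of lower-bounding $1-\theta$ by $\theta\log(1/\theta)$, it proves directly that $\tfrac{1}{1-\theta}\le\tfrac{7p}{4\log p}$ --- equivalently $\tfrac{1}{p-3}\log\tfrac{\log^2(p-1)}{4(p-1)}\le\log\bigl(1-\tfrac{4\log p}{7p}\bigr)$ --- using $\log(1-x)\ge-\tfrac{13}{12}x$ for $0\le x\le0.1486$ and a split into two manifestly nonnegative pieces $R_1(p),R_2(p)$. From that one-sided bound, \eqref{ap03} is immediate:
\[
\frac{2(p-1)}{(1-\theta)^{p-3}}\le 2(p-1)\left(\frac{4\log p}{7p}\right)^3\left(\frac{7p}{4\log p}\right)^p\le\kappa_p,
\]
since $2(p-1)\bigl(4\log p/(7p)\bigr)^3$ is decreasing on $p>8$ and equals $(\log 8)^3/196$ exactly at $p=8$; the constant in $\kappa_p$ is calibrated precisely so that this endgame needs no extra work. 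Your inequality $1-\theta\ge\theta\log(1/\theta)$ is in fact sharper asymptotically than the paper's $1-\theta\ge4\log p/(7p)$, but it decouples from the constant $(\log 8)^3/196$ and so forces a separate finite-range verification --- exactly the step your proposal flags but does not carry out.
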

\begin{proof}
Firstly, we will prove that
\begin{equation}\label{ap04}
\begin{split}
\dfrac{1}{1-\theta}\le \dfrac{7p}{4\log p}
\end{split}
\end{equation}
which is equivalent to 
\begin{equation}\label{ap05}
\begin{split}
\dfrac{1}{p-3}\log\left(\dfrac{\log^2(p-1)}{4(p-1)}\right)\le \log\left(1-\dfrac{4\log p}{7p}\right).
\end{split}
\end{equation}
Since $\dfrac{4\log p}{7p}$ is decreasing when $p>8$,
\begin{equation}\label{ap06}
\dfrac{4\log p}{7p}\le 0.1486.
\end{equation}
On the other hand, it is easy to prove that
\begin{equation}\label{ap07}
\log(1-x)\ge \dfrac{-13x}{12} \text{ for all } 0\le x\le 0.1486.
\end{equation}
From \eqref{ap06} and \eqref{ap07}, we have
\begin{equation}\label{ap09}
\begin{split}
\log\left(1-\dfrac{4\log p}{7p}\right) \ge -\dfrac{13\log p}{21p}. 
\end{split}
\end{equation}
Therefore, to prove \eqref{ap05}, it suffices to prove that
\begin{equation}\label{ap12}
\begin{split}
\dfrac{1}{p-3}\log\left(\dfrac{\log^2(p-1)}{4(p-1)}\right)\le  -\dfrac{13\log p}{21p}
\end{split}
\end{equation}
which is equivalent to 
\begin{equation}\label{ap18}
R_1(p)+R_2(p)\ge 0, 
\end{equation}
where
\[R_1(p)=\dfrac{13\log(p-1)}{21(p-3)}-\dfrac{13\log p}{21p},\]
and
\[R_2(p)=\dfrac{(8/21)\log(p-1)-2\log \left(\log(p-1)\right)+\log(4)}{p-3}.\]
Elementary calculus shows that $R_1(p)\ge 0$ and $R_2(p)\ge 0$ for all $p>8$. Therefore \eqref{ap12} holds, completing the proof of \eqref{ap04}.

Now, we will prove \eqref{ap03}. Since $p>8$, we have from \eqref{ap04} that
\begin{equation*}
\begin{split}
\dfrac{2(p-1)}{\left(1-\theta\right)^{p-3}}&\le 2(p-1)\left(\dfrac{4\log p}{7p}\right)^{3}\left(\dfrac{7p}{4\log p}\right)^{p}\le \dfrac{\left(\log 8\right)^3}{196} \left(\dfrac{7p}{4\log p}\right)^{p}=\kappa_p.
\end{split}
\end{equation*}
The proof of the lemma is completed.

\end{proof}

\section*{Acknowledgements}
The first author and the third author were partially supported by Grant R-146-000-182-112 and Grant R-146-000-230-114 from the
National University of Singapore.
A substantial part of this paper was written when the third author was at the Institute for Mathematical Sciences (IMS) and Department of Mathematics, National University of Singapore. He would like to thank the IMS staff and the Department of Mathematics for their hospitality.


\begin{thebibliography}{}

\bibitem[\protect\citeauthoryear{Baik, Deift, and Johansson}{1999}]{BDJ}
Baik, J., Deift, P., and Johansson, K.  (1999)
{On the distribution of the
length of the longest increasing subsequence of random
permutations}.
 {\it J. Amer. Math. Soc.} \textbf{12}, no. 4, 1119--1178.
\MR{1682248}


\bibitem[\protect\citeauthoryear{Borodin, Okounkov, and Olshanski}{2000}]{BOO}
Borodin, A., Okounkov, A., and Olshanski, G.  (2000)
Asymptotics of Plancherel
measures for symmetric groups. {\it J. Amer. Math. Soc.} \textbf{13},
481--515.
\MR{1758751}

\bibitem[\protect\citeauthoryear{Chatterjee and Dey}{2010}]{CD10}
Chatterjee, S., and Dey. P. S. (2010) Applications of Stein's method for concentration inequalities. {\it Ann.Probab.} \textbf{38}, 
no. 6, 2443--2485.
\MR{2683635}

\bibitem[\protect\citeauthoryear{Chen and Shao}{2001}]{ChenShao01} 
Chen, L. H. Y., and Shao, Qi-Man. (2001) A non-uniform Berry-Esseen
bound via Stein's method. {\it Probab. Theory Related Fields}. \textbf{120}, no. 2, 236--254.
\MR{1841329} 


\bibitem[\protect\citeauthoryear{Chen and Shao}{2004}]{ChenShao04}
Chen, L. H. Y., and Shao, Q. M.   (2004) Normal approximation under local
dependence. {\it Ann. Probab.}  \textbf{32},  no. 3A, 1985--2028.
\MR{2073183}

\bibitem[\protect\citeauthoryear{Chen, Goldstein and Shao}{2011}]{CGS} 
Chen, L. H. Y., Goldstein, L., and Shao, Q. M. (2011)
{\textit Normal approximation by Stein's method.} Probability and its Applications (New York). Springer, Heidelberg.
\MR{2732624}
 
 
 \bibitem[\protect\citeauthoryear{Chen, Goldstein and R\"{o}llin}{2020}]{CGR} 
 Chen, L. H. Y., Goldstein, L., and R\"{o}llin, A. (2020)
Stein's method via induction. \url{https://arxiv.org/abs/1903.09319}

\bibitem[\protect\citeauthoryear{Diaconis and Shahshahani}{1981}]{DS}
Diaconis, P., and Shahshahani, M.  (1981)
{Generating a random permutation with random transpositions}. {\it
Z. Wahr. Verw. Gebiete} \textbf{57}, no.2 , 159--179.
\MR{0626813}


\bibitem[\protect\citeauthoryear{Do\l{}\c{e}ga and F\'{e}ray}{2016}]{DolegaF}
Do\l{}\c{e}ga, M., and F\'{e}ray, V. (2016) 
Gaussian fluctuations of Young diagrams and structure constants of Jack characters. \textit{Duke Math. J.} \textbf{165}, no. 7, 1193--1282. 
\MR{MR3498866}

\bibitem[\protect\citeauthoryear{Do\l{}\c{e}ga and \'{S}niady}{2019}]{DolegaS}
Do\l{}\c{e}ga, M., and \'{S}niady, P. (2019) {Gaussian fluctuations of Jack-deformed random Young diagrams}. \textit{Probab. Theory Related Fields} \textbf{174}, 
no. 1-2, 133--176.
\MR{MR3947322}

\bibitem[\protect\citeauthoryear{Eskin and Okounkov}{2001}]{EskinOkounkov}
Eskin, A., and Okounkov, A. (2001)
{Asymptotics of numbers of branched coverings of a torus and volumes of moduli
spaces of holomorphic differentials}. {\it Invent. Math.} \textbf{145}, no. 1, 59--103.
\MR{1839286}

\bibitem[\protect\citeauthoryear{Frobenius}{1900}]{Fro00}
Frobenius, G. F. (1900) \;{U}ber die Charaktere der symmetrischen Gruppe. {\it Sitz. Konig. Preuss. Akad.
Wissen}, 516-534; Gesammelte abhandlungen III, Springer-Verlag, Heidelberg,
1968, 148--166.

\bibitem[\protect\citeauthoryear{Fulman}{2004}]{Fulman04} 
Fulman, J. (2004)
{Stein's method, Jack measure, and the Metropolis algorithm}. {\it
J. Combin. Theory Ser. A} \textbf{108}, no. 2, 275--296.
\MR{2098845}

\bibitem[\protect\citeauthoryear{Fulman}{2006}]{Fulman06b} 
Fulman, J. (2006)
{An inductive proof of the Berry-Esseen theorem for character
ratios}. {\it Ann. Comb.} \textbf{10}, no. 3, 319--332.
\MR{2284273}

\bibitem[\protect\citeauthoryear{Fulman and Goldstein}{2011}]{FulmanGoldstein}
Fulman, J., and Goldstein, L. (2011) {Zero biasing and Jack measures}. {\it
Comb. Probab. Comput.} \textbf{20}, no. 5, 753--762.
\MR{2825589}

\bibitem[\protect\citeauthoryear{Goldstein}{2005}]{Goldstein05} 
Goldstein, L. (2005) Berry-Esseen bounds for combinatorial central limit theorems and pattern occurrences, using zero and size biasing. 
\textit{J. Appl. Probab.} \textbf{42}, no. 3, 661--683.
\MR{2157512}

\bibitem[\protect\citeauthoryear{Goldstein and Reinert}{1997}]{GoldsteinReinert97} 
Goldstein, L., and Reinert, G. (1997) Stein's method and the zero
bias transformation with application to simple random sampling. {\it
Ann. Appl. Probab.} \textbf{7}, no. 4, 935--952.
\MR{1484792}

\bibitem[\protect\citeauthoryear{Hora}{1998}]{Hora}
Hora, A. (1998) {Central limit theorem for the adjacency operators on the
infinite symmetric group}. {\it Comm. Math. Phys.} \textbf{195}, no.
2, 405--416.
\MR{1637801}

\bibitem[\protect\citeauthoryear{Hora and Obata}{2007}]{HoraObata}
Hora, A., and Obata, N. (2007) \textit {Quantum probability and spectral
analysis of graphs}. Theoretical and Mathematical Physics. Springer,
Berlin.
\MR{2316893}

\bibitem[\protect\citeauthoryear{Ibragimov and Sharakhmetov}{1997}]{Ibragimov}
Ibragimov, R. and Sharakhmetov, Sh. (1997) On an exact constant for the Rosenthal inequality.
\textit{Theory Probab. Appl.} \textbf{42}, no. 2, 294--302.
\MR{1474714}

\bibitem[\protect\citeauthoryear{Johansson}{2001}]{Johansson}
Johansson, K.  (2001) {Discrete
orthogonal polynomial ensembles and the Plancherel measure}. {\it
Ann. of Math. (2)} \textbf{153}, no. 1, 259--296.
\MR{1826414}

\bibitem[\protect\citeauthoryear{Johnson, Schechtman and Zinn}{1985}]{JSZ} 
Johnson, W. B., Schechtman, G., and Zinn, J. (1985) 
Best constants in moment inequalities for linear combinations of independent and exchangeable random variables. 
\textit{Ann. Probab.} \textbf{13}, no. 1, 234--253.
\MR{0770640}

\bibitem[\protect\citeauthoryear{Kerov}{1993}]{Kerov}
Kerov, S. (1993) {Gaussian limit for the Plancherel measure of the
symmetric group}. {\it C. R. Acad. Sci. Paris Ser. I Math.} \textbf{316}, no. 4, 303--308.
\MR{1204294}


\bibitem[\protect\citeauthoryear{Kerov}{2000}]{Kerov00}
Kerov, S. (2000) Anisotropic Young diagrams and Jack symmetric functions.
{\it Funct. Anal. Appl.} \textbf{34}, no. 1, 41--51.
\MR{1756734}

\bibitem[\protect\citeauthoryear{Lata\l{}a}{1997}]{Latala}
Lata\l{}a, R. (1997) Estimation of moments of sums of independent real random variables. \textit{Ann. Probab.} \textbf{25}, no. 3, 1502--1513. 
\MR{1457628}

\bibitem[\protect\citeauthoryear{Matsumoto}{2008}]{Matsumoto08}
Matsumoto, S. (2008) Jack deformations of Plancherel measures and traceless Gaussian random matrices.
 {\it Electron. J. Combin.} {\bf 15}, no. 1, research paper 149, 18 pp.
\MR{2465773}


\bibitem[\protect\citeauthoryear{Rosenthal}{1970}]{Rosenthal}
Rosenthal, H. P.(1970) On the subspaces of $L\sp{p}$ $(p>2)$ spanned by sequences of independent random variables.
{\it Israel J. Math.} \textbf{8}, 273--303.
\MR{0271721}

\bibitem[\protect\citeauthoryear{Sagan}{2001}]{Sagan}
Sagan, B. (2001) {\it The symmetric group. Representations, combinatorial algorithms, and symmetric functions.} 2nd edn.
Springer-Verlag, New York.
\MR{1824028}

\bibitem[\protect\citeauthoryear{Shao and Su}{2006}]{ShaoSu} Shao, Q. M., and Su, Z. G. (2006)
{The Berry-Esseen bound for character ratios}. {\it Proc. Amer.
Math. Soc.} \textbf{134}, no. 7, 2153--2159.
\MR{2215787}


\bibitem[\protect\citeauthoryear{\'{S}niady}{2006}]{Sniady} 
\'{S}niady, P.  (2006) {Gaussian fluctuations of characters of symmetric
groups and of Young diagrams}. {\it Probab. Theory Related Fields}
\textbf{136}, no. 2, 263--297.
\MR{2240789}

\bibitem[\protect\citeauthoryear{Stein}{1986}]{Stein86}
Stein, Charles. (1986) Approximate computation of expectations. Institute
of Mathematical Statistics Lecture Notes Monograph Series, 7. {\it
Institute of Mathematical Statistics, Hayward, CA}. iv+164 pp.
\MR{0882007}
\vspace{.5in}



\end{thebibliography}
\end{document}